\definecolor{liens}{rgb}{1,0,0}
\newtheorem{thm}{Theorem}[section]
\newtheorem{cor}[thm]{Corollary}
\newtheorem{lemma}[thm]{Lemma}
\newtheorem*{claim}{Claim}
\newtheorem{prop}[thm]{Proposition}
\newtheorem{conj}[thm]{Conjecture}
\theoremstyle{definition}
\newtheorem{defn}[thm]{Definition}
\newtheorem{defi}[thm]{Definition}
\theoremstyle{remark}
\newtheorem{rmk}[thm]{Remark}
\newtheorem{rem}[thm]{Remark} 
\newtheorem{ex}[thm]{Example} %this is redundant
\numberwithin{equation}{section}
\def\beq{\begin{equation}}
\def\eeq{\end{equation}}
\def\crash#1{}
\def\N{{\mathbb N}}
\def\Z{{\mathbb Z}}
\def\Q{{\mathbb Q}}
\def\C{{\mathbb C}}
\def\GL{\mathrm{GL}}
\def\l{\left}
\def\r{\right}
\def\div{\mathrm{div}}
\def\id{\mathrm{id}}
\def\cH{{\mathcal H}}
\def\cL{{\mathcal{ L}}}
\def\s{\sigma}
\def\V{\mathbb{V}}
\def\Hom{\operatorname{Hom}}
\def\G{\mathcal{G}}
\def\Gal{\operatorname{Gal}}
\def\Aut{\operatorname{Aut}}
\def\<{\langle}
\def\>{\rangle}
\def\dim{\mathrm{dim}}
\def\trdeg{\mathrm{tr.deg}}
\def\sphi{\phi\sigma}
\def\sGal{\Gal^{\sigma}}
\def\Quot{\mathrm{Quot}}
\def\sdim{\sigma$-$\dim}
\def\dim{\mathrm{dim}}
\def\strdeg{\s$-$\trdeg}
\newcommand{\f}{\phi}
\newcommand{\fs}{\phi\sigma}
\begin{document}

\sloppy

\title{Algebraic independence and linear difference equations}

\author[B. Adamczewski]{Boris Adamczewski}
\address{Univ Lyon, Universit\'e Claude Bernard Lyon 1, CNRS UMR 5208, Institut Camille Jordan, 43 blvd. du 11 novembre 1918, F-69622 Villeurbanne cedex, France}
\email{boris.adamczewski@math.cnrs.fr}
\author[T. Dreyfus]{Thomas Dreyfus}
\address{Institut de Recherche Math\'ematique Avanc\'ee, U.M.R. 7501 Universit\'e de Strasbourg et C.N.R.S. 7, rue Ren\'e Descartes 67084 Strasbourg, France}
\email{dreyfus@math.unistra.fr}
\author[C. Hardouin]{Charlotte Hardouin}
\address{Institut de Math\'ematiques de Toulouse, Universit\'e Paul Sabatier, 118, route de Narbonne, 31062 Toulouse, France}
\email{hardouin@math.univ-toulouse.fr}
\author[M. Wibmer]{Michael Wibmer}
\address{Institute of Analysis and Number Theory, Graz University of Technology, Kopernikusgasse 24/II,8010 Graz, Austria}
\email{wibmer@math.tugraz.at}
\keywords{}

\thanks{ This project has received funding from the European Research Council (ERC) under the European Union's Horizon 2020 research and innovation program under the Grant Agreement No 648132 and from the ANR de rerum natura ANR-19-CE40-0018. The fourth author was supported by the NSF grants DMS-1760212, DMS-1760413, DMS-1760448 and the Lise Meitner grant M-2582-N32 of the Austrian Science Fund FWF. We are grateful for the support from the NSF grant DMS-1952694 that allowed us to work on this project during DART X in New York}
\subjclass[2010]{12H10, 39A06, 39A10, 39A13, 39A45, 11J81}
%12H10 Difference algebra
%39A06 Linear difference equations 
%39A10 Additive difference equations 
%39A13 Difference equations, scaling (q-differences)
%39A45 Difference equations in the complex domain 
%20H20 Other matrix groups over fields
%11J81 Number theory, Transcendence (general theory) 

\date{\today}

\maketitle
%%%%%%%%%%%%%%%%%%%%%%%%%%%%%%%%%%%%%%%%%%%%%%%%%%%%%%%%%%%%%%%%%%%%
%%%%%%%%%%%%%%%%%%%%%%%%%%%%%%%%%%%%%%%%%%%%%%%%%%%%%%%%%%%%%%%%%%%%
%%%%%%%%%%%%%%%%%%%%%%%%%%%%%%%%%%%%%%%%%%%%%%%%%%%%%%%%%%%%%%%%%%%%
%%%%%%%%%%%%%%%%%%%%%%%%%%%%%%%%%%%%%%%%%%%%%%%%%%%%%%%%%%%%%%%%%%%%
%%%%%%%%%%%%%%%%%%%%%%%%%%%%%%%%%%%%%%%%%%%%%%%%%%%%%%%%%%%%%%%%%%%%
%%%%%%%%%%%%%%%%%%%%%%%%%%%%%%%%%%%%%%%%%%%%%%%%%%%%%%%%%%%%%%%%%%%%
%\input{abstract.tex}
%\tableofcontents

\begin{abstract}

We consider pairs of automorphisms $(\phi,\s)$ acting on fields of Laurent or Puiseux series:  
pairs of shift operators $(\phi\colon x\mapsto x+h_1,\ \linebreak[4] \s\colon x\mapsto x+h_2)$, of $q$-difference operators 
$(\phi\colon x\mapsto q_1x,\ \s\colon x\mapsto q_2x)$, 
and of Mahler operators $(\phi\colon x\mapsto x^{p_1},\ \s\colon x\mapsto x^{p_2})$. 
Given a solution $f$ to a linear $\phi$-equation and a solution $g$ to a linear 
$\sigma$-equation, both transcendental, 
we show that 
$f$ and $g$ are algebraically independent over the field 
of rational functions, assuming that the corresponding parameters are sufficiently independent.   
As a consequence, we settle a conjecture about Mahler functions put forward 
by Loxton and van der Poorten in 1987. We also give an application  
 to the algebraic independence of  $q$\=/hypergeometric functions.  
 Our approach provides a general strategy to study this kind of question and 
 is based on a suitable Galois theory: the $\s$-Galois theory of linear $\phi$-equations.  
\end{abstract}
	
%%%%%%%%%%%%%%%%%%%%%%%%	

\section{Introduction}	
Let $K$ be a field and let $F$ be a field extension endowed with an endomorphism 
$\phi$ such that $\phi(K)\subset K$. 
A linear $\phi$-difference equation over $K$ is an equation of the form 
\begin{equation}\label{eq: phi0}
\phi^{n} (y) +a_{n-1}\phi^{n-1}(y)+ \cdots + a_{0} y=0 \,,
\end{equation}
where  $a_{0},...,a_{n-1} \in K$. 
The set ${\rm Sol}_{\phi,K,F}$, formed by all elements in $F$ that are solution to a linear $\phi$-difference 
equation over $K$, is a ring  containing $K$.  Traditionally, the algebraic relations over $K$ between the elements of 
${\rm Sol}_{\phi,K,F}$ are studied through the difference Galois theory associated 
with the endomorphism $\phi$ 
(see  for instance, \cite{VdPdifference}).  
If we assume that $K$ and $F$ are endowed with a second endomorphism $\s$ 
that is \emph{sufficiently independent} from $\phi$, we expect the intersection of 
${\rm Sol}_{\phi,K,F}$ and ${\rm Sol}_{\s,K,F}$ to be \emph{small},  
even possibly reduced to $K$.  
Sch\"afke and Singer \cite{SchaefkeSinger} recently confirmed this expectation in several important cases. (See Theorem \ref{lem:linearlyclosedfieldextension} below for a precise statement.)  They consider pairs of shift operators $(\phi\colon x\mapsto x+h_1,\ \s\colon x\mapsto x+h_2)$, of $q$-difference operators 
$(\phi\colon x\mapsto q_1x,\ \s\colon x\mapsto q_2x)$, 
and of Mahler operators $(\phi\colon x\mapsto x^{p_1},\ \s\colon x\mapsto x^{p_2})$.
While some special cases were already known, these authors were the  
first to provide a unified approach to this type of results.  

The aim of this paper is to go one step further, promoting the idea that, in the above cases,  
the elements of ${\rm Sol}_{\phi,K,F}$ and ${\rm Sol}_{\s,K,F}$  should be algebraically independent, 
unless they belong to the small intersection. 
Our main result, Theorem \ref{thm: main}, appears to be the first 
general result supporting this viewpoint. Our approach provides a unified strategy for attacking this kind of 
problem. It rests on a suitable Galois theory: the $\s$-Galois theory of linear $\phi$-difference equations
developed in \cite{OvWib}. 
As a particular instance of  Theorem \ref{thm: main},  
we settle a conjecture of Loxton and van der Poorten \cite{vdP87} concerning Mahler functions.  
The latter, which was itself motivated by its consequence in the theory of finite automata, 
was our initial motivation for the present work.  
We also give a second application of Theorem \ref{thm: main} to the algebraic independence of 
$q$-hypergeometric series. The strategy we follow to prove Theorem~\ref{thm: main} was initiated recently by the first three authors in \cite{ADH}, where general hypertranscendence results are obtained 
for solutions of linear difference equations associated with 
the same three operators (shift, $q$-difference, and Mahler).  However, 
when working with a parametric operator (here $\s$) that is an endomorphism, 
instead of a derivation as in \cite{ADH}, one needs to overcome 
a number of technical difficulties.  
Let us also mention that we introduce in Section \ref{sec: main} some new group theoretic arguments concerning linear algebraic groups that would lead to a significant simplification 
of the proof of the main result of \cite{ADH}.  

%%%%%%%%%%%%%%%%%%%%%
\subsection{Statement of our main result} 

Throughout this paper, our framework consists of a tower of field extensions 
$\C\subset K \subset F$ 
with the following properties.  
\begin{itemize}
\item The field $K$ is equipped with a pair of automorphisms $(\phi,\s)$.  
\item These automorphisms extend to  $F$.
\end{itemize}
Specifically, we consider the  following situations, which we refer to as Cases 
\textbf{2S},  \textbf{2Q}, and \textbf{2M}, respectively.

\medskip

\noindent {\bf Case} \textbf{2S}. In this case, we consider $K=\C(x)$, $F=\C((x^{-1}))$, 
$\phi(x)=x+h_1$ and $\s(x)=x+h_2$, where $h_1,h_2\in \C$ are \emph{$\Z$-linearly independent}, i.e. 
$h_1/h_2\not\in \mathbb Q$.

\medskip

\noindent  {\bf Case} $\textbf{2Q}$.  In this case, we let $K=\displaystyle\bigcup_{j\geq 1} \C(x^{1/j})$ 
denote the field of ramified rational functions. We also use the notation $\C(x^{1/*})$ for this field. 
We let $F=\displaystyle\bigcup_{j\geq 1} \C((x^{1/j}))$ 
denote the field of Puiseux series. We also use the notation $\C((x^{1/*}))$ for this field. 
We let $(\phi,\s)$ denote the pair of automorphisms of $K$ (and $F$) defined by 
$$
{\phi(x)= q_1x} \quad \mbox{and} \quad \s(x)= q_2x\,,
$$
where $q_1$ and $q_2$ are two \emph{multiplicatively independent} nonzero complex numbers, i.e. $q_1^{n_1}q_2^{n_2}=1$ implies $n_1=n_2=0$ for all $n_1,n_2\in\Z$. Furthermore, we also add the following mild restriction: 
$q_1$ and $q_2$ cannot be both algebraic numbers of modulus one, whose Galois conjugates all have modulus one.	
For instance, one cannot choose $q_1=(3+4i)/5$ and $q_2=(5+12i)/13$.  Note that, when $q_1$ and $q_2$ are multiplicatively independent, none of them is a root of unity.

\medskip

\noindent  {\bf Case} $\textbf{2M}$.  In this case, we let $K=\C(x^{1/*})$, $F=\C((x^{1/*}))$, and  
we let $(\phi,\s)$ denote the pair of automorphisms of $K$ (and $F$) defined by 
$$
\phi(x)= x^{p_1} \quad \mbox{and} \quad \s(x)= x^{p_2}\,,
$$
where $p_1$ and $p_2$ are two \emph{multiplicatively independent} natural numbers.

The recent result of Sch\"afke and Singer \cite{SchaefkeSinger} mentioned before 
can now be stated as follows. 

\begin{thm}[Sch\"afke and Singer]\label{lem:linearlyclosedfieldextension}
Let $K$, $F$, and $(\phi,\s)$ be defined as in Cases \textbf{2S}, 
\textbf{2Q}, and 
\textbf{2M}.  Then an element $f\in F$ cannot satisfy both a linear $\phi$-difference equation and 
a linear $\s$-difference equation with coefficients in $K$, unless it belongs to $K$.
\end{thm}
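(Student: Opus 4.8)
The plan is to argue that the two difference equations together constrain the \emph{singularities} of $f$ so severely that $f$ can only be a rational function. First I would reduce to a finite-dimensional bi-difference module. Suppose $\phi^{n_1}(f)=\sum_{i<n_1}a_i\phi^i(f)$ and $\s^{n_2}(f)=\sum_{j<n_2}b_j\s^j(f)$ with $a_i,b_j\in K$. Since $\phi$ and $\s$ commute and preserve $K$, the $K$-span $U$ of $\{\phi^i\s^j(f):0\le i<n_1,\ 0\le j<n_2\}$ is a finite-dimensional $K$-vector space, stable under both $\phi$ and $\s$, containing $f$. (Stability is checked on the spanning set: applying $\phi$ to $\phi^{n_1-1}\s^j(f)$ and invoking the first equation, resp. $\s$ to $\phi^i\s^{n_2-1}(f)$ and the second, keeps us inside $U$ because the coefficients $\s^j(a_i)$ and $\phi^i(b_j)$ again lie in $K$.) Thus it suffices to treat simultaneous solutions living in such a bimodule; concretely, the only feature I will use is that $f$ is both $\phi$-finite and $\s$-finite.

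Second, I would analyze where $f$ can be singular. Reading $f$ as a (possibly multivalued) meromorphic germ, the $\phi$-equation $\sum_i a_i(x)f(\phi^i x)=0$ propagates any singularity of $f$ along the forward and backward $\phi$-orbit of the offending point, away from the finitely many zeros and poles of the $a_i$. Hence the singular locus of $f$ lies in a finite union of full $\phi$-orbits: arithmetic progressions $x_0+h_1\Z$ in Case \textbf{2S}, logarithmic $q_1$-spirals $q_1^{\Z}x_0$ in Case \textbf{2Q}, and iterated-power orbits $\{x_0^{p_1^k}\}$ in Case \textbf{2M}. Applying the same reasoning to the $\s$-equation confines the singular locus to a finite union of $\s$-orbits.

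Third comes the transversality step, where the independence hypotheses enter decisively. A single $\phi$-orbit and a single $\s$-orbit meet in only finitely many points: in Case \textbf{2S} the equation $x_0+ah_1=y_0+bh_2$ forces $ah_1-bh_2$ to equal a fixed value, and $\Z$-linear independence of $h_1,h_2$ makes $(a,b)\mapsto ah_1-bh_2$ injective, so there is at most one solution; in Case \textbf{2Q} the equation $q_1^ax_0=q_2^by_0$ has at most one solution $(a,b)$ because $q_1^aq_2^{-b}=1$ forces $a=b=0$ by multiplicative independence; Case \textbf{2M} is analogous, using multiplicative independence of $p_1,p_2$. Intersecting the two finite unions of orbits therefore leaves only finitely many candidate singular points, and a Puiseux (resp. Laurent) series with finitely many singularities and the global growth imposed by a linear difference equation is rational, giving $f\in K$.

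The delicate point, and the main obstacle, is that $f$ lies in the field $F$ of \emph{formal} series, so the meromorphic picture above is not literally available. Making singularity propagation rigorous requires either proving convergence of these formal solutions (available in Cases \textbf{2S} and \textbf{2Q} via Gevrey/fixed-point estimates) or replacing the analytic argument by a formal one based on Newton polygons and the slope filtrations of the $\phi$- and $\s$-difference modules. I expect the genuinely hard parts to be Case \textbf{2M}, where Mahler functions typically admit the unit circle as a natural boundary so that no naive continuation exists and the relevant local data must be extracted at $x=0$ and at roots of unity, and the excluded arithmetic configuration in Case \textbf{2Q} (both $q_i$ algebraic of modulus one with all conjugates of modulus one), which is precisely where the convergence and transversality estimates degenerate and hence must be ruled out by hypothesis.
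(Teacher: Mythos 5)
Your proposal takes a genuinely different route from the one the paper relies on: the paper does not prove this statement internally at all, but imports it from Sch\"afke and Singer (via \cite[Corollaries 14--16]{SchaefkeSinger} and the reasoning of \cite[Remark 1.4]{ADH}), whose mechanism is the compatibility theorem reproduced here as Proposition \ref{prop:compatibilityimpliesconstantcoeff}: the two equations produce a compatible pair of systems, which is shown by a \emph{local} analysis at the fixed points of the operators to be simultaneously gauge-equivalent over $K$ to constant-coefficient systems, from which the conclusion is read off. Your first step (the finite-dimensional $(\phi,\s)$-stable span $U$) is correct and is exactly how one arrives at that compatible pair; the difficulties begin afterwards.

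The decisive gap is your closing inference, that a series with finitely many singularities ``and the global growth imposed by a linear difference equation'' is rational. This is false as stated: in Case \textbf{2S} the entire function $g(x)=e^{2\pi i x/h_1}$ satisfies $\phi(g)=g$ and $\s(g)=e^{2\pi i h_2/h_1}\,g$, has no singularities whatsoever, and is not rational. What excludes it is solely that $g\notin F=\C((x^{-1}))$; the hypothesis $f\in F$, i.e.\ the behaviour at the fixed point ($\infty$, $0$, or $1$), is where essentially all of the content of the theorem sits, and your argument never invokes it at the crucial moment --- no growth control ``at infinity'' can be extracted from the difference equations alone. A second structural failure occurs in Case \textbf{2M}: the backward orbit of a point under $x\mapsto x^{p_1}$ is not a discrete orbit but the full tree of $p_1^k$-th roots, which is dense in the unit circle whenever the point has modulus one, so the singular locus is not confined to finitely many discrete orbits and the transversality count has nothing to intersect; this is the very phenomenon that gives Mahler functions natural boundaries, and you flag it as ``hard'' without offering a substitute. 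Finally, in Cases \textbf{2S} and \textbf{2Q} the elements of $F$ are formal and generically divergent, and the promised Gevrey/Newton-polygon repair is only named, not carried out. So the proposal correctly sets up the reduction to a compatible pair but does not contain a proof; the known arguments (B\'ezivin--Boutabaa, Adamczewski--Bell, Sch\"afke--Singer) all replace the global singularity analysis by local normal forms at the fixed points precisely to circumvent these three obstructions.
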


\begin{rem}\label{rem: SS}
Using the same reasoning as in \cite[Remark 1.4]{ADH}, Theorem~\ref{lem:linearlyclosedfieldextension} may be deduced from \cite[Corollary 14 to 16]{SchaefkeSinger}. 
We recall that Case $\textbf{2Q}$  was proved, though in less generality, by B\'ezivin and Boutabaa \cite{BeBo}, while Case $\textbf{2M}$  is due to the first author and Bell \cite{BorisAboutMahler}.  We refer the reader to \cite{SchaefkeSinger} and the references therein for more details 
on the different contributions to the other cases. 
\end{rem}

Our main result is the following generalization of Theorem \ref{lem:linearlyclosedfieldextension}.

\begin{thm}\label{thm: main}
Let $K$, $F$, and $(\phi,\s)$ be defined as in Cases \textbf{2S}, \textbf{2Q}, and \textbf{2M}. Let $f\in F$ be a solution to a linear $\phi$-difference equation with coefficients in $K$ and let $g\in F$ be a solution to a linear $\s$-difference equation with coefficients in $K$. Then $f$ and $g$ are algebraically independent over $K$, unless one of them belongs to $K$.
\end{thm}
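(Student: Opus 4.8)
The plan is to prove algebraic independence by combining the $\sigma$-Galois theory of linear $\phi$-difference equations (following \cite{OvWib} and the strategy of \cite{ADH}) with the Sch\"afke--Singer result, Theorem~\ref{lem:linearlyclosedfieldextension}, which handles the ``degenerate'' overlap between the two solution rings. Suppose for contradiction that $f$ and $g$ are algebraically dependent over $K$, with neither lying in $K$. Since $g$ satisfies a linear $\sigma$-equation, I would regard $\sigma$ as the \emph{parametric} operator and set up the $\sigma$-Galois theory for the $\phi$-equation satisfied by $f$. Concretely, I would build a $\sigma$-$\phi$-Picard--Vessiot extension containing $f$, and attach to it its $\sigma$-Galois group $G$, a linear \emph{difference}-algebraic group over an appropriate $\sigma$-field of constants. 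The central dictionary is that the $\sigma$-algebraic relations satisfied by $f$ over $K$ correspond to the defining $\sigma$-equations of $G$, while the ordinary algebraic relations correspond to the Zariski closure $\ol{G}$ of $G$ inside the ambient linear algebraic group.

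The key steps, in order, are as follows. First I would reduce to the case where $f$ satisfies a first-order inhomogeneous equation or, after a standard factorization/cyclic-vector argument, to a situation where the Galois group $\ol{G}$ is as small as possible compatible with $f\notin K$; the hypothesis that $f$ is transcendental forces $\ol{G}$ to be nontrivial and, in the crucial cases, a subgroup of $\mathbb{G}_a$ or $\mathbb{G}_m$. Second, an algebraic dependence of $f$ and $g$ over $K$ is a polynomial relation $P(f,g)=0$; I would apply powers of $\sigma$ to this relation, using that $g$ satisfies a linear $\sigma$-equation to control the $\sigma$-orbit of $g$, and thereby manufacture a nontrivial \emph{$\sigma$-algebraic} relation satisfied by $f$ alone over $K$. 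Third, I would translate this $\sigma$-relation on $f$ into a proper difference-algebraic subgroup condition on the $\sigma$-Galois group $G$, i.e. show that $G$ is strictly smaller than its Zariski closure $\ol{G}$. The new group-theoretic arguments alluded to in Section~\ref{sec: main} enter here: for linear algebraic groups such as $\mathbb{G}_a$ and $\mathbb{G}_m$, one must classify the proper Zariski-dense $\sigma$-algebraic subgroups and extract from them a usable structural consequence for $f$.

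The final step is to show that such a proper $\sigma$-subgroup forces $f$ itself (or a related quantity built from $f$) to satisfy a linear $\sigma$-difference equation over $K$, in addition to the linear $\phi$-equation it already satisfies. At that point Theorem~\ref{lem:linearlyclosedfieldextension} applies and yields $f\in K$, contradicting the standing assumption that $f\notin K$. The symmetric conclusion disposes of $g$, completing the proof.

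I expect the \textbf{main obstacle} to lie in the third and fourth steps, specifically in transferring information between the difference-algebraic group $G$ and the genuinely arithmetic hypotheses on the parameters $(h_1,h_2)$, $(q_1,q_2)$, or $(p_1,p_2)$. The delicate point is that $\sigma$ is now an \emph{endomorphism} rather than a derivation, so the parametrized Galois groups are difference-algebraic rather than differential-algebraic; the tidy Lie-algebra / Kolchin classification available in the derivation setting of \cite{ADH} is unavailable, and one must instead control the proper $\sigma$-algebraic subgroups of $\mathbb{G}_a$ and $\mathbb{G}_m$ by hand. In particular, ruling out the exceptional algebraic-unit-circle configuration in Case~\textbf{2Q} (the condition excluding numbers like $(3+4i)/5$) is precisely what prevents a spurious $\sigma$-relation from surviving, and reconciling that number-theoretic input with the group-theoretic dichotomy is where the real work concentrates.
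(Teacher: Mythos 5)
Your top-level reduction is the same as the paper's: an algebraic dependence between $f$ and $g$, together with the fact that $g$ satisfies a linear $\s$-equation, makes $f$ $\s$-algebraic over $K$ (this is Lemma~\ref{lemma:algebraicindependenceversussigmaalgebraicdependence}, a one-line transitivity argument), so everything hinges on proving that a solution of a linear $\phi$-equation is either in $K$ or $\s$-transcendental (Theorem~\ref{thm: sigmatranscendence}). Your order-one analysis, and the endgame in which a proper Zariski-dense $\s$-subgroup of $\mathbb{G}_a$ or $\mathbb{G}_m$ produces a linear $\s$-relation for $f$ that is then killed by Theorem~\ref{lem:linearlyclosedfieldextension}, matches Propositions~\ref{propo:sprang1} and~\ref{propo:shift_shift_algrank1}.

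The genuine gap is your claim that one can always reduce to the situation where the relevant Galois group is a subgroup of $\mathbb{G}_a$ or $\mathbb{G}_m$. For an equation of order $n\geq 2$ the Galois group $\G$ can be irreducible and non-solvable (e.g.\ contain $\mathrm{SL}_2$), and no factorization or cyclic-vector argument shrinks it; the Lie--Kolchin theorem only triangularizes \emph{solvable} connected groups. Your sketch says nothing about this case, and it is where most of the paper's work lies. The paper handles it by (a) showing that the set $\pi(S_A|K)$ of $\s$-algebraic elements of the $\s$-Picard--Vessiot ring is stable under the full functorial action of the $\s$-Galois group (the entire Appendix, via the $\s$-bounded elements $\mu(S|K)$ and a descent argument), so that irreducibility of the representation forces \emph{all} solutions to be $\s$-algebraic and hence $\strdeg(L_A|K)=0$; (b) passing to a simple quotient of $\G$ via the structure theory of semisimple groups and the second fundamental theorem of Galois theory; and (c) showing that for a simple $\G$ a proper $\s$-closed Zariski-dense subgroup forces the system to be $\s^d$-isomonodromic, whereupon the Sch\"afke--Singer compatibility theorem (Proposition~\ref{prop:compatibilityimpliesconstantcoeff}, a different input from the Corollaries behind Theorem~\ref{lem:linearlyclosedfieldextension}) makes the system constant-coefficient, hence with abelian Galois group, contradicting simplicity. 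Your final mechanism --- extract a linear $\s$-equation for $f$ and invoke Theorem~\ref{lem:linearlyclosedfieldextension} --- simply does not operate in this regime, and without steps (a)--(c) the induction on the order (reducible case via block triangularization, then the irreducible block of size $n_1$ shown to satisfy $n_1=1$) cannot close. A secondary inaccuracy: the arithmetic condition excluding $q_1,q_2$ both being algebraic units on the unit circle enters only through the quoted Sch\"afke--Singer results, not through the classification of $\s$-subgroups of $\mathbb{G}_a$ and $\mathbb{G}_m$, so the ``real work'' is not located where you predict.
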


Though expressed differently\footnote{See Theorem \ref{thm: sigmatranscendence} 
for a statement close to Theorem \ref{thm: main} but involving $\s$-transcendence as in \cite{DHRqdiffhypergo}.}, 
Case \textbf{2Q} of Theorem \ref{thm: main} generalizes some results of \cite{DHRqdiffhypergo}, where  additional assumptions are made on the Galois group 
associated with the linear $\phi$-difference equation satisfied by $f$.

\begin{rem}
In the three cases we consider, the independence required for the corresponding pairs $(\phi,\s)$ could be rephrased and unified as follows: $\phi$ and $\s$ commute, and if $f\in F$ is not a constant (i.e., $f\not\in \mathbb C$), then the equation $\phi^n(f)=\s^m(f)$ 
has no nontrivial solution in integers $n$ and $m$\footnote{The additional mild condition in Case $\textbf{2Q}$ is 
probably not necessary.}. As suggested by the previous discussion, it would be interesting to formalize a general principle emerging from Theorems \ref{lem:linearlyclosedfieldextension} and \ref{thm: main}. 
\end{rem}

%%%%%%%%%%%%%%%%%%%%%%
\subsection{Application to Mahler functions}

Let $\mathbb K$ be a field and $p\geq 2$ be an integer. 
A power series $f(x)\in \mathbb K[[x]]$ is a $p$-\emph{Mahler function}
if there exist  polynomials $a_0(x),\ldots,a_n(x)\in \mathbb K[x]$, not all zero, such that
\begin{equation}\label{eq: mahler}
a_0(x)f(x)+a_1(x)f(x^p)+\cdots+a_n(x)f(x^{p^n})=0\,.
\end{equation} 
Furthermore, $f$ is a \emph{Mahler function} if it is a $p$-Mahler function for some 
integer $p\geq 2$.  
Regarding Equation \eqref{eq: mahler}, it is tempting to ask about the significance of 
the parameter $p$. In 1976, van der Poorten \cite{vdP76} suggested that Mahler functions associated 
with multiplicatively independent parameters should behave independently. 
For instance, he asked whether the two Mahler functions
$$
\sum_{n=0}^{\infty} x^{2^n}  \quad\mbox{ and }\quad\sum_{n=0}^{\infty} x^{3^n}
$$
are algebraically independent over $\overline{\mathbb Q}(x)$.  
The transcendence theory of Mahler functions, that is, the study of the transcendence and algebraic 
independence of values of Mahler functions over $\overline{\mathbb{Q}}$ at algebraic points, was initiated by Mahler at the end of the 1920s. 
Mahler only considered (possibly inhomogeneous) order one equations, and Mahler's method 
really restarted in the 1970s in the hands of Kubota, 
Loxton and van der Poorten, and later Ku. Nishioka (see the recent survey \cite{Ad19} and the references therein).  
Kubota, Loxton, and van der Poorten expected that Mahler's method   
could be developed far enough to solve the above question\footnote{In \cite{Ku76}, Kubota announced a paper on this problem but the latter never appeared. Also, Loxton and van der Poorten stated some related results in 
\cite{LvdP78}, but the corresponding proofs are considered to be incomplete (see the discussion in \cite[p.  89]{Ni94}).}.   
In 1987, van der Poorten \cite{vdP87} announced a series of results that he expected to prove 
in his collaboration with Loxton on Mahler's method\footnote{However, after this date, there is no 
new publication on this topic by these authors.}.   
Among them, the following two are related to Mahler functions associated with multiplicatively independent parameters.  

\begin{conj}[Loxton and van der Poorten]\label{conj}
Let $p_1$ and $p_2$ be two multiplicatively independent natural numbers.  
Then the following holds. 
\begin{itemize}
\item[{\rm (i)}] An element $f\in \overline{\mathbb Q}[[x]]$ cannot be both a $p_1$- and a $p_2$-Mahler function,  unless it is rational. 

\item[{\rm (ii)}] If $f\in \overline{\mathbb Q}[[x]]$ is a $p_1$-Mahler function and 
$g\in \overline{\mathbb Q}[[x]]$ is a $p_2$-Mahler function. Then  
 $f$ and $g$ are algebraically independent over $\overline{\mathbb Q}(x)$, unless one of them is rational. 
\end{itemize}
\end{conj}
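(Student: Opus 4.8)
The plan is to derive Conjecture~\ref{conj} from Theorem~\ref{thm: main} by recognizing $p$-Mahler functions as solutions of linear difference equations in Case~\textbf{2M}. With $\phi(x)=x^{p_1}$ one has $\phi^k(f)=f(x^{p_1^k})$, so Equation~\eqref{eq: mahler} reads $\sum_{k=0}^n a_k(x)\phi^k(f)=0$; discarding vanishing top coefficients and dividing by the leading nonzero one, a $p_1$-Mahler function $f\in\overline{\mathbb Q}[[x]]$ becomes a solution of an equation of the form~\eqref{eq: phi0} with coefficients in $\overline{\mathbb Q}(x)\subset K$. Since $\overline{\mathbb Q}[[x]]\subset\C((x^{1/*}))=F$ and $p_1,p_2$ are multiplicatively independent natural numbers, we are exactly in the setting of Case~\textbf{2M}, with $f\in{\rm Sol}_{\phi,K,F}$ and, likewise, any $p_2$-Mahler function $g\in{\rm Sol}_{\s,K,F}$ for $\s(x)=x^{p_2}$.

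Granting this reduction, both parts follow from Theorem~\ref{thm: main}. For part~(ii), $f$ solves a $\phi$-equation and $g$ a $\s$-equation, so Theorem~\ref{thm: main} yields that $f$ and $g$ are algebraically independent over $K$ unless one of them lies in $K$. For part~(i), I would apply the same theorem with $g=f$: a function that is simultaneously $p_1$- and $p_2$-Mahler solves both a $\phi$- and a $\s$-equation, yet $f$ and $f$ are patently algebraically dependent, so the exceptional clause must hold and $f\in K$. Alternatively, part~(i) is just Case~\textbf{2M} of Theorem~\ref{lem:linearlyclosedfieldextension}.

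It remains to descend the conclusions from $K=\C(x^{1/*})$ to $\overline{\mathbb Q}(x)$. Algebraic independence over $K$ trivially implies algebraic independence over the subfield $\overline{\mathbb Q}(x)$, so the only real point is to upgrade the exceptional clause ``$f\in K$'' to ``$f$ is rational'', i.e.\ to show that $f\in\overline{\mathbb Q}[[x]]\cap\C(x^{1/*})$ forces $f\in\overline{\mathbb Q}(x)$. I would argue in two steps. First, if $f\in\C(x^{1/j})$, the ramification is spurious: the generator $\tau\colon x^{1/j}\mapsto\zeta_j x^{1/j}$ of $\Gal(\C(x^{1/j})/\C(x))$ acts on Puiseux expansions by $x^{m/j}\mapsto\zeta_j^m x^{m/j}$, and since the expansion of $f$ involves only integer powers of $x$ it is fixed by $\tau$, whence $f\in\C(x)$. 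Second, a power series $f\in\overline{\mathbb Q}[[x]]$ that is rational over $\C$ is rational over $\overline{\mathbb Q}$: the relations $Q(x)f(x)=P(x)$ with $P,Q$ of bounded degree form a homogeneous linear system whose coefficients are the (known) series coefficients of $f$, all in $\overline{\mathbb Q}$, so a nonzero solution over $\C$ forces a nonzero solution over $\overline{\mathbb Q}$ and hence $f\in\overline{\mathbb Q}(x)$. Combining the two steps, ``$f\in K$'' together with $f\in\overline{\mathbb Q}[[x]]$ indeed gives $f\in\overline{\mathbb Q}(x)$, which is precisely the exceptional case in Conjecture~\ref{conj}.

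As for the main obstacle: essentially all the difficulty is absorbed into Theorem~\ref{thm: main}, whose proof rests on the $\s$-Galois theory of linear $\phi$-difference equations. Once it is granted, the deduction above is routine; the only mildly delicate point is the two-layer descent, which must simultaneously remove the ramification (the passage $\C(x^{1/*})\to\C(x)$, via the Galois action) and descend the field of constants (the passage $\C\to\overline{\mathbb Q}$, via linear algebra). Neither layer is serious, but both are needed to match the exact ``unless rational'' formulation of Loxton and van der Poorten.
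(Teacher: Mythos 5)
Your proposal is correct and follows the same route the paper intends: the conjecture is obtained as a direct consequence of Theorem~\ref{thm: main} in Case \textbf{2M} (with part (i) also covered by Theorem~\ref{lem:linearlyclosedfieldextension}), the paper simply leaving the reduction and the descent unwritten. Your explicit two-step descent --- removing the ramification via the Galois action of $x^{1/j}\mapsto\zeta_j x^{1/j}$ and descending the constants from $\C$ to $\overline{\mathbb Q}$ by linear algebra --- correctly fills in the details needed to pass from the exceptional clause ``$f\in K=\C(x^{1/*})$'' to ``$f\in\overline{\mathbb Q}(x)$''.
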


We find that (ii) is a generalization of (i), while (i) itself is a generalization of Cobham's theorem, 
an important result in the theory of automatic sequences and sets 
(see, for instance, \cite{BorisAboutMahler} for more details).  
As already mentioned in Remark \ref{rem: SS}, 
part (i) of Conjecture~\ref{conj} has finally been proved by the first author and Bell in 
\cite{BorisAboutMahler}. A different proof has been given by Sch\"afke and Singer 
in \cite{SchaefkeSinger}. 

The first result towards Part (ii) of Conjecture \ref{conj} is due to Ku. Niskioka \cite{Ni94} 
who proved the case where $f$ and 
$g$ both satisfy an inhomogeneous order one equation.  
Recently, the first author and Faverjon \cite{AF18a,AF18b} proved the case where $f$ and $g$ are both solution to a Mahler equation of the form \eqref{eq: mahler} with $a_0(0)a_n(0)\not=0$.  The results of \cite{Ni94}  and \cite{AF18a,AF18b} are based on Mahler's method. 
As a direct consequence of Theorem \ref{thm: main}, we fully prove Conjecture \ref{conj} 
(even a more general version where the base field is the field of complex numbers). 

\begin{thm}
Let $p_1$ and $p_2$ be two 
multiplicatively independent natural numbers. 
Let $f\in {\mathbb C}[[x]]$ be a $p_1$-Mahler function and let 
$g\in {\mathbb C}[[x]]$ be a $p_2$-Mahler function. Then  
$f$ and $g$ are algebraically independent over ${\mathbb C}(x)$, unless one of them is rational.  
\end{thm}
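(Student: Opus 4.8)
The plan is to deduce the statement directly from Case \textbf{2M} of Theorem~\ref{thm: main}; the substance is entirely contained in that theorem, and what remains here is only to match the hypotheses and to translate the exceptional clauses.

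First I would make explicit the dictionary between Mahler functions and linear $\phi$-difference equations. Take $\phi(x)=x^{p_1}$ and $\s(x)=x^{p_2}$ as in Case~\textbf{2M}, so that $K=\C(x^{1/*})$ and $F=\C((x^{1/*}))$. Since $\C[[x]]\subset\C((x))\subset F$, both $f$ and $g$ lie in $F$. For a power series one has $\phi^i(f)(x)=f(x^{p_1^i})$, so the Mahler relation \eqref{eq: mahler} satisfied by $f$ is exactly $\sum_{i=0}^n a_i(x)\phi^i(f)=0$. Letting $N=\max\{i:a_i\neq 0\}$ and dividing by $a_N(x)$ puts this in the monic form \eqref{eq: phi0} with coefficients $a_i(x)/a_N(x)\in\C(x)\subset K$. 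Hence $f$ is a solution of a linear $\phi$-difference equation over $K$, and, in the same way, $g$ is a solution of a linear $\s$-difference equation over $K$. This is precisely the setting of Theorem~\ref{thm: main}.

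Applying that theorem yields that $f$ and $g$ are algebraically independent over $K=\C(x^{1/*})$ unless one of them lies in $K$. Because $\C(x)\subset K$, algebraic independence over the larger field $K$ implies algebraic independence over $\C(x)$, which is the conclusion we want in the generic case.

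It remains only to reconcile the two exceptional clauses, and this is the one point requiring a short argument: I claim that a power series in $\C[[x]]$ that happens to lie in $K=\C(x^{1/*})$ is automatically rational. Indeed, such an $f$ lies in $\C(x^{1/j})$ for some $j\geq 1$; writing $u=x^{1/j}$ we have $f=R(u)$ for some $R\in\C(u)$, with $x=u^j$. Let $\zeta$ be a primitive $j$-th root of unity and let $\tau$ be the $\C(x)$-automorphism of $\C(u)$ determined by $u\mapsto\zeta u$; its fixed field is $\C(u^j)=\C(x)$. Since $f$, as an element of $\C[[x]]$, involves only integer powers of $x=u^j$, its power-series expansion is invariant under $\tau$, so $R(\zeta u)=R(u)$ and therefore $f=R\in\C(x)$, i.e.\ $f$ is rational. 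As rational functions obviously belong to $K$, the clause ``one of $f,g$ belongs to $K$'' of Theorem~\ref{thm: main} coincides with the clause ``one of $f,g$ is rational,'' and the theorem follows. The main obstacle in the whole circle of ideas is of course Theorem~\ref{thm: main} itself; within the present deduction the only non-formal step is the descent lemma just sketched, which is nonetheless elementary Galois theory.
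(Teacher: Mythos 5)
Your proof is correct and follows the same route as the paper, which obtains this theorem precisely as a direct consequence of Case \textbf{2M} of Theorem~\ref{thm: main} and leaves the bookkeeping unstated. The only non-formal detail you supply, the descent $\C[[x]]\cap\C(x^{1/*})=\C(x)$ reconciling the two exceptional clauses, is carried out correctly.
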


Let us also mention that, in an ongoing work, Medvedev, Nguyen, and Scanlon \cite{MNS20+} 
study a similar problem
where linear Mahler equations of arbitrary order are replaced by
nonlinear order one equations of the form $f(x^p)=P(x,f(x))$, where $P$
is a polynomial in two variables. 
%%%%%%%%%%%%%%%%%%%%%%%%%%%%%%%%%%%%%%%
\subsection{Application to $q$-hypergeometric series} 

Let $q$ be a nonzero complex number that is not a root of unity.  
For every nonnegative integer $n$, we let 
$$
[n]!_q=\prod_{i=1}^n\frac{1-q^i}{1-q}
$$
denote the $q$-analog of the factorial $n!$.  
The $q$-Pochammer symbol, also called $q$-shifted factorial, is defined as
$$
(a;q)_n:=\prod_{i=0}^{n-1}(1-aq^i) \,.
$$
The most classical $q$-analog of hypergeometric functions is given by the so-called (generalized) \emph{basic hypergeometric functions} or $q$-\emph{hypergeometric functions}:
$$
{}_{r+1}\phi_s\left(\begin{array}{c}\alpha_1,\dots,\alpha_r\\ \beta_1,\dots,\beta_s\end{array}\Bigg|\,q,x\right)=\sum_{n=0}^\infty\frac{(\alpha_1;q)_n\cdots(\alpha_r;q)_n}{(\beta_1;q)_n\cdots(\beta_s;q)_n}\left((-1)^nq^{\binom{n}{2}}\right)^{s-r}\frac{x^n}{[n]!_q}\, \cdot
$$
Here, $r$ and $s$ are two nonnegative natural numbers and $\alpha_1,\ldots,\alpha_r,\beta_1,\ldots,\beta_s$ 
are complex numbers. For the coefficients to exist we must assume that $\beta_i \not\in q^{\mathbb Z_{\leq0}}$ 
for all $i$. 
By definition, a $q$-hypergeometric function belongs to $\mathbb C[[x]]$. 
It is also well-known that a $q$-hypergeometric function satisfies a $q$-linear difference equation 
(see, for instance, \cite{GR04}).   
As a direct consequence of Case $\textbf{2Q}$ of Theorem \ref{thm: main}, we obtain the following result. 

\begin{thm}
Let $q_1$ and $q_2$ be two multiplicatively independent complex numbers satisfying 
the conditions given in Case $\textbf{2Q}$.  
Let $f$ be a $q_1$-hypergeometric function and  let $g$ be a $q_2$-hypergeometric function. Then 
$f$ and $g$ are algebraically independent over 
$\mathbb C(x)$, unless one of them is rational.  
\end{thm}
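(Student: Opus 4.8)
The plan is to derive this statement directly from Case $\textbf{2Q}$ of Theorem~\ref{thm: main}, so that the work reduces to two routine verifications aligning the present setting with the hypotheses and conclusion of that theorem. Recall that in Case $\textbf{2Q}$ one takes $K=\C(x^{1/*})$, $F=\C((x^{1/*}))$, $\phi(x)=q_1x$ and $\s(x)=q_2x$, and that the conditions imposed on $q_1,q_2$ in the present statement are exactly those of Case $\textbf{2Q}$, so nothing extra need be checked there. The functions $f$ and $g$ belong to $\C[[x]]\subset F$ by definition, so it remains to verify (a) that $f$ solves a linear $\phi$-difference equation and $g$ a linear $\s$-difference equation, both with coefficients in $K$, and (b) that the exceptional clause ``one of them belongs to $K$'' produced by Theorem~\ref{thm: main} really amounts here to ``one of them is rational.''

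For step (a), I would invoke the standard theory of basic hypergeometric series (see \cite{GR04}): the operator $y(x)\mapsto y(q_1x)$ is precisely $\phi$, and the recurrence satisfied by the coefficients of ${}_{r+1}\phi_s$ translates into a linear $\phi$-difference equation for $f$ whose coefficients are polynomials in $x$, hence lie in $\C(x)\subset K$. Applying the same argument to the operator $y(x)\mapsto y(q_2x)=\s$ shows that $g$ satisfies a linear $\s$-difference equation over $K$. Consequently $f\in{\rm Sol}_{\phi,K,F}$ and $g\in{\rm Sol}_{\s,K,F}$, and Theorem~\ref{thm: main} applies verbatim: $f$ and $g$ are algebraically independent over $K$ unless one of them lies in $K$. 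Since $\C(x)\subseteq K$, algebraic independence over the larger field $K$ immediately yields algebraic independence over $\C(x)$, because any algebraic relation over $\C(x)$ would a fortiori be one over $K$.

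The only point requiring a small additional argument is step (b), where I identify the exceptional case. Suppose $f\in K$; then $f\in\C(x^{1/j})$ for some $j\geq 1$. I would run a Galois-descent argument: the group $\Gal(\C(x^{1/j})/\C(x))$ acts on Puiseux series by $x^{1/j}\mapsto\zeta x^{1/j}$ for $\zeta$ a $j$-th root of unity, sending $\sum_n a_n x^{n/j}$ to $\sum_n a_n\zeta^n x^{n/j}$. Because $f\in\C[[x]]$ has an expansion supported on integer exponents, it is fixed by this action, hence $f\in\C(x^{1/j})^{\Gal}=\C(x)$, i.e. $f$ is rational; the same applies to $g$. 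I do not anticipate a genuine obstacle, as the theorem is a direct corollary of Theorem~\ref{thm: main}; the only care needed is to ensure that the difference equations produced in step (a) have their coefficients in $K$ rather than merely in $F$, and to carry out the descent in step (b) cleanly so that membership in the ramified field $K$ is correctly downgraded to membership in $\C(x)$.
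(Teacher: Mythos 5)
Your proposal is correct and follows exactly the route the paper intends: the paper gives no separate proof, simply declaring the theorem a direct consequence of Case \textbf{2Q} of Theorem~\ref{thm: main} together with the standard fact (cited to \cite{GR04}) that $q$-hypergeometric functions satisfy linear $q$-difference equations over $\C(x)$. Your additional Galois-descent argument showing that a power series in $\C[[x]]\cap\C(x^{1/j})$ is in fact rational correctly fills in the one detail the paper leaves implicit.
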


\subsection{Organization of the paper.} 
This article is organized as follows. In
Section \ref{sec: galois}, we give a short introduction to the Galois theory and
the $\s$-Galois theory of linear $\phi$-difference equations, following \cite{VdPdifference} and \cite{OvWib}. 
Several auxiliary results are gathered in Section \ref{sec: aux}. Section \ref{sec: main} is devoted to the proof of Theorem \ref{thm: main}. 
The latter is obtained as a consequence of Theorem~\ref{thm: sigmatranscendence}, which generalizes 
some results of \cite{DHRqdiffhypergo}. 
Finally, Appendix~\ref{appendix}  provides the reader with some complementary results on difference algebraic 
groups and their representations that are needed for the proof of Theorem~\ref{thm: main}.
%%%%%%%%%%%%%%%%%%%%%%%%%%%	
\section{Galois theories of linear difference equations}\label{sec: galois}

In this section, we provide a short introduction to the Galois theory of linear $\phi$-equations and to 
the $\s$-Galois theory of linear $\phi$-equations.

\subsection{Galois theory of linear difference equations}\label{sec:PVring}
We first recall some notation, as well as classical results, concerning the Galois theory of linear difference equations. We refer the reader to \cite{VdPdifference} for more details.

\subsubsection{Notation in operator algebra}\label{sec: operator}
In what follows, all rings are commutative, with identity, and contain $\Q$.  
In particular, all fields are of characteristic zero. 
Given a ring $R$, we let  $\Quot(R)$ denote the total quotient ring of $R$, that is, the localization of $R$ 
at the multiplicatively closed subset of all nonzero divisors. Given a field $K$,  a $K$-algebra $R$,  
and a subset $S$ of $R$, we let $K(S)$ denote the total quotient ring of the ring generated  by $S$ over $K$.  
 
 A $\phi$-difference ring, or $\phi$-ring for short, is a pair $(R,\phi)$ where $R$ is a ring and 
 $\phi$ is a ring endomorphism of $R$.  When $R$ is a field,  $R$ is called a $\phi$-field.  
 An ideal $I$ of $R$ such that $\phi(I)\subset I$ is called a difference ideal or a $\phi$-ideal. 
The difference ring $(R,\phi)$ is simple if the only $\phi$-ideals of $R$ are $\{0\}$ and $R$. 
Two difference rings $(R_1,\phi_1)$ and $(R_2,\phi_2)$ are isomorphic 
if there exists a ring isomorphism $\varphi$ 
between $R_1$ and $R_2$ such that $\varphi\circ \phi_1=\phi_2\circ \varphi$.  
A difference ring $(S,\phi')$ is a difference ring extension of $(R,\phi)$ if $S$ is a ring extension of $R$ 
and if $\phi'_{\mid R}=\phi$. In this case, we usually keep on denoting $\phi'$ by $\phi$.  
When $R$ is a $\phi$-field, we say that $S$ is a $R$-$\phi$-algebra. 
Two difference ring extensions $(R_1,\phi)$ and $(R_2,\phi)$ of the difference ring $(R,\phi)$ 
are isomorphic over $(R,\phi)$ if there exists a difference ring isomorphism $\varphi$ 
from $(R_1,\phi)$ to $(R_2,\phi)$ such that $\varphi_{\mid R}=\mbox{Id}_{R}$.  
The ring of constants of the difference ring $(R,\phi)$ is defined by 
$$
R^{\phi}:=\{r\in R \mid \phi(r)=r\} \,.
$$
If $R^{\phi}$ is a field, it is called the field of constants. 
If there is no risk of confusion, we usually simply say that $R$, instead of $(R,\phi)$, 
is a difference ring (or a difference field, or a difference ring extension...).

%%%%
\subsubsection{Difference equations and linear difference systems}  
A linear $\phi$-equation  of order $n$ over a $\f$-field $K$ is an equation of the form 
\begin{equation}\label{eq: phi}
 \cL(y):=\phi^{n} (y) +a_{n-1}\phi^{n-1}(y)+ \cdots + a_{0} y=0 \,,
\end{equation}
with  $a_{0},...,a_{n-1} \in K$. If   $a_0 \neq 0$, 
this relation  can be written in matrix form as 
\begin{equation}\label{eq: companion}
\phi(Y)=A_\cL Y\,
\end{equation}
where
$$
 A_\cL :=\begin{pmatrix}
0&1&0&\cdots&0\\
0&0&1&\ddots&\vdots\\
\vdots&\vdots&\ddots&\ddots&0\\
0&0&\cdots&0&1\\
-a_{0}& -a_{1}&\cdots & \cdots & -a_{n-1}
\end{pmatrix} \in \GL_{n}(K) \,.
$$
The matrix $A_\cL$ is called the companion matrix associated with Equation~\eqref{eq: phi}. 
It is often more convenient to use the notion of linear difference system,  
that is of system of the form 
\begin{equation}\label{eq:systeminitial}
\phi(Y)=AY, \hbox{ with } A \in \GL_n(K).\,
\end{equation}
We recall that two difference systems $\phi (Y)=AY$ and $\phi (Y)=\widetilde{A}Y$ with $A,\widetilde{A} \in \GL_{n}(K)$ are 
said to be \emph{equivalent over $K$} if  there exists a gauge transformation $T \in \GL_{n}(K)$ such that 
$\widetilde{A}=\phi(T) AT^{-1}$. In that case, ${\phi (Y)=AY}$ if and only if $\phi (TY)=\widetilde{A} (TY)$. 
 
\begin{rem}\label{rem:orderofthesystorderoftheequation}
Let $L|K$ be an extension of $\f$-fields and assume that $f=(f_1,f_2,\dots,f_{n})^\top \in L^n$ is a  
solution to $\phi(Y)=AY$, with $A \in \GL_n(K)$.
As the $K$-subspace of $L$ generated by $f_1,\ldots,f_n$ is closed under $\f$, it follows that each coordinate 
$f_i$ of $f$ satisfies a nontrivial linear $\phi$-equation over $K$ of order at most $n$. 
\end{rem}

%%%%%%%%%%%%%%%%%%%%%%%%%%%%%%%%%%%%
\subsubsection{Galois theory of linear difference equations}

 In this section, we give a brief summary of the Galois theory of linear 
 difference equations.

  \begin{defn}[Definition 2.2 in \cite{OvWib}]\label{def:pseudofield}
 A $\phi$-pseudo field is a $\phi$-simple, Noetherian $\phi$-ring $K$ such that every nonzero divisor of 
 $K$ is invertible in $K$. If $K$ is a $\phi$-pseudo field then there exists 
 orthogonal idempotents $e_1,\dots, e_d$ of $K$  such that 
 \begin{itemize}
 \item $K=e_1.K \oplus \hdots \oplus e_d.K$,
 \item $\phi(e_1)=e_2, \phi(e_2)=e_3, \dots, \phi(e_d)=e_1$ and 
 \item $e_i.K$ is a field for $i=1,\dots,d$ (so, $e_i.K$ is a $\phi^d$-field).
 \end{itemize}
 
 \end{defn}

Let $K$ be a $\phi$-field,  $R$ be a $K$-$\phi$-algebra and $S \subset R$ be a subset of $R$. 
We let $K\{S\}_\phi$ denote the smallest $K$-$\phi$-subalgebra of $R$ that contains $S$. 
If $R$ is a $\phi$-pseudofield, we let $K\langle S \rangle_\phi$ denote 
the smallest $\phi$-pseudo field of $R$ that contains $S$.

A \emph{Picard-Vessiot ring} for \eqref{eq:systeminitial} over  a difference field  $(K, \phi)$ 
is a $K$-$\phi$-algebra $R_A$ satisfying the following three properties. 

\smallskip
\begin{itemize}
\item[(1)] There exists $U \in \GL_{n}(R_A)$ such that $\phi(U)=AU$. Such a matrix $U$ is called a
\emph{fundamental matrix}.  

\smallskip

\item[(2)] $R_A$ is generated as a $K$-algebra by the coordinates of $U$ and by $\det(U)^{-1}$,  that is 
$R_A=K[U,\det(U)^{-1}]$. 

\smallskip

\item[(3)] $R_A$ is a simple $\f$-ring.

\end{itemize}

\medskip

A \emph{Picard-Vessiot extension} $K_A$ for \eqref{eq:systeminitial} over the difference field $(K,\phi)$ 
is a $\phi$-pseudo field extension of $K$ satisfying the following properties. 

\smallskip

\begin{itemize}
\item $K_A=K( U )$ where $U \in \GL_n(K_A)$ is a fundamental matrix for \eqref{eq:systeminitial}.

\smallskip

\item $K_A^{\phi}=K^\phi$.
\end{itemize}

\medskip

Note that we have $K^\f=F^\f=\C$ in our main cases of interest, i.e. in Cases \textbf{2S}, \textbf{2Q}, and \textbf{2M}. By \cite[Section 1.1]{VdPdifference} and
\cite[Proposition~2.14, Corollary~2.15]{OvWib}, we obtain the following proposition that connects the two definitions. 
\begin{prop}\label{prop:PVRingPVextconstantalgclos}
If $C=K^\phi$ is an algebraically closed field, then there exists a unique  
(up to isomorphism of $K$-$\phi$-algebras)  
Picard-Vessiot extension for \eqref{eq:systeminitial}. 
Moreover,  the following properties hold. 
\begin{itemize}
\item Given a Picard-Vessiot extension $K_A$ with fundemental matrix $U \in \GL_n(K_A)$, the $K$-$\phi$-algebra $R_A:=K[U, \frac{1}{\det U} ] \subset K_A$ is a Picard-Vessiot ring.
\item Given a Picard-Vessiot ring $R_A$, the total quotient ring $\Quot(R_A)$ is a Picard-Vessiot extension.
\end{itemize}
\end{prop}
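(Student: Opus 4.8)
The plan is to construct the Picard–Vessiot ring by a universal (indeterminate-matrix) construction, pass to its total quotient ring to obtain the extension, and then prove uniqueness by the standard tensor-product argument. First I would introduce the $\phi$-ring $S=K[X,\det(X)^{-1}]$, where $X=(X_{ij})$ is an $n\times n$ matrix of indeterminates and $\phi$ is extended to $S$ by decreeing $\phi(X)=AX$; this is legitimate precisely because $A\in\GL_n(K)$, so $\phi$ remains injective. Since $S$ is Noetherian, the set of proper $\phi$-ideals has a maximal element $\m$, and the quotient $R_A:=S/\m$ is $\phi$-simple with the image $U$ of $X$ a fundamental matrix. By construction $R_A=K[U,\det(U)^{-1}]$ satisfies properties (1)--(3), so it is a Picard–Vessiot ring.

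The technical heart is the lemma that $R_A^{\phi}=C$. I would first observe that $R_A^{\phi}$ is a field: if $c\in R_A^{\phi}$ is nonzero then $cR_A$ is a nonzero $\phi$-ideal, hence equal to $R_A$, and one checks directly that the resulting inverse of $c$ is again a constant. Because $C$ is algebraically closed, it then suffices to exclude a constant $c$ transcendental over $C$, and here one exploits the finite generation of $R_A$ together with $\phi$-simplicity to manufacture a proper nonzero $\phi$-ideal, a contradiction. This is the step where algebraic closedness of $C$ is genuinely used, and I expect it to be the main obstacle. Next, since $R_A$ is a Noetherian $\phi$-simple ring, its minimal primes are permuted transitively by $\phi$, which produces orthogonal idempotents $e_1,\dots,e_d$ cyclically permuted by $\phi$ with each $e_iR_A$ a domain; consequently $\Quot(R_A)$ is exactly a $\phi$-pseudo field in the sense of Definition \ref{def:pseudofield}.

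For the second bullet I would set $K_A:=\Quot(R_A)$. The decomposition above shows $K_A$ is a $\phi$-pseudo field; it equals $K(U)$ since it is generated over $K$ by the entries of $U$ and $\det(U)^{-1}$; and $K_A^{\phi}=C=K^{\phi}$ by the field-of-fractions version of the constants lemma (once $R_A^{\phi}$ is the field $C$, the cyclic structure forces no new constants to appear in the total quotient ring). Hence $K_A$ is a Picard–Vessiot extension. Conversely, for the first bullet, given a Picard–Vessiot extension $K_A$ with fundamental matrix $U$, the subalgebra $R_A:=K[U,\det(U)^{-1}]$ is the candidate ring; the only nontrivial point is its $\phi$-simplicity, which I would obtain by comparing $R_A$ with the abstractly constructed ring inside the common pseudo field and using that both have constant field $C$.

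Finally, for existence and uniqueness of the extension, given two Picard–Vessiot rings $R_1,R_2$ with fundamental matrices $U_1,U_2$, I would form the $\phi$-ring $R_1\otimes_K R_2$, choose a maximal $\phi$-ideal, and let $R_3$ be the resulting $\phi$-simple quotient. Each structure map $R_i\to R_3$ is injective because $R_i$ is $\phi$-simple. In $R_3$ the matrix $U_2^{-1}U_1$ is $\phi$-invariant, so by the constants lemma applied to $R_3$ it lies in $\GL_n(C)\subset\GL_n(K)$; therefore $K[U_1]=K[U_2]$ inside $R_3$, the images of $R_1$ and $R_2$ coincide, and $R_1\cong R_2$ over $K$. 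Uniqueness of the Picard–Vessiot extension then follows by passing to total quotient rings.
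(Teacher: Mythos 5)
The paper offers no proof of this proposition: it is quoted from the literature, with the statement attributed to Section~1.1 of van der Put--Singer's book and to Proposition~2.14 and Corollary~2.15 of Ovchinnikov--Wibmer. Your outline is essentially the standard argument contained in those references --- the universal ring $K[X,\det(X)^{-1}]$ with $\phi(X)=AX$ modulo a maximal $\phi$-ideal, the constants lemma for $\phi$-simple finitely generated $K$-algebras, the idempotent decomposition of a Noetherian $\phi$-simple ring into a $\phi$-pseudo field, and the tensor-product trick showing $U_2^{-1}U_1\in\GL_n(C)$ for uniqueness --- so the architecture is sound and there is no wrong turn. Two steps are left thin, and they are precisely the technical core of the cited results. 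First, excluding a constant $b$ transcendental over $C$: the standard route is that $(b-c)R_A$ is a $\phi$-ideal for every $c\in C$, hence $b-c$ is a unit for all $c$, and one then contradicts this via the Nullstellensatz applied to the finitely generated $K$-algebra $R_A$; your phrase ``manufacture a proper nonzero $\phi$-ideal'' is the right slogan but the contradiction genuinely needs this extra input. Second, the $\phi$-simplicity of $K[U,\det(U)^{-1}]$ inside a given Picard--Vessiot extension $K_A$ (your first bullet) does not follow formally from ``both rings have constant field $C$''; the proof in Ovchinnikov--Wibmer runs through the matrix $Z=(U\otimes 1)^{-1}(1\otimes \bar U)$, the identification of the constants of $K_A\otimes_K S$ with $C[Z,\det(Z)^{-1}]$, and a correspondence between $\phi$-ideals and ideals of constants. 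Neither point is a flaw of strategy, but if the goal were a self-contained proof rather than a pointer to the literature, these are the two places where the real work remains.
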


From now on, we assume that $K$ is a $\phi$-field with $C=K^\phi$ an algebraically closed field of 
characteristic zero. Let $A \in \GL_n(K)$ and let $K_A$ be a Picard-Vessiot extension for $\phi(Y)=AY$. 
Let $R_A \subset K_A$ denote the Picard-Vessiot ring defined in Proposition \ref{prop:PVRingPVextconstantalgclos}. The \emph{Galois group} $\Gal(K_A|K)$ of $\f(Y)=AY$ or of $K_A|K$ is the functor
from the category of $C$-algebras to the category of groups that associates to any $C$-algebra $B$ the group of all $K\otimes_C B$-$\f$-automorphisms of $R_A\otimes_C B$. 
Here $\f$ acts as the identity on $B$. The functor $\G=\Gal(K_A|K)$ is representable. In fact, $\G$ is represented by $C[\G]=(R_A\otimes_K R_A)^\f$ (see \cite[Theorem~2.8]{Bachmaier:Nori} or \cite[Section~1.2]{VdPdifference}).

Let  $U \in \GL_n(R_A)$ be  
a fundamental matrix for 
$\phi(Y)=AY$ and let $B$ be a $C$-algebra. For any $\tau \in \G(B)$, there exists  
$[\tau]_U \in \GL_n(B)$ such that 
$\tau(U\otimes 1)=(U\otimes 1)[\tau]_U$. The morphism of functors $\G\to \GL_{n,C}$, given by $\G(B)\to \GL_n(B),\ \tau\mapsto [\tau]_B$ for any $C$-algebra $B$,
identifies $\G$ with a closed subgroup of $\GL_{n,C}$. 
Since $C$ is algebraically closed and of characteristic zero, an algebraic group $\G$ over $C$ can be identified with $\G(C)$. Therefore one often identifies $\Gal(K_A|K)$ with $\Gal(K_A|K)(C)$.

In that setting, there is a Galois correspondence. We will only need the following special case.

\begin{prop}[Lemma 1.28 in \cite{VdPdifference}] \label{prop:Galois correspondence}
With the above notation, let $\mathcal{H} \subset \Gal(K_A|K)$ be a closed subgroup, then  
the following two statement are equivalent. 

\smallskip

\begin{itemize}
\item $K_A^\mathcal{H} :=\{f \in K_A |\ \tau(f)=f \,,  \;\; \forall\  \tau \in\mathcal{H} \}= K$.

\smallskip

\item $\mathcal{H}=\Gal(K_A|K)$.
\end{itemize}
\end{prop}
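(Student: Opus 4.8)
The plan is to deduce both implications from the torsor structure of the Picard--Vessiot ring. Set $G=\Gal(K_A|K)$; by construction $C[G]=(R_A\otimes_K R_A)^\phi$, and the Galois action is encoded by a coaction $\rho\colon R_A\to R_A\otimes_C C[G]$. The geometric input I would use is the torsor isomorphism $\gamma\colon R_A\otimes_K R_A\xrightarrow{\ \sim\ }R_A\otimes_C C[G]$, $a\otimes b\mapsto(a\otimes1)\rho(b)$, which expresses that $\operatorname{Spec}(R_A)$ is a $G$-torsor over $\operatorname{Spec}(K)$. As $R_A\to R_A\otimes_C C[G]$ is faithfully flat, $\rho$ sends nonzerodivisors to nonzerodivisors and thus extends to a coaction on $K_A=\Quot(R_A)$; for $f\in K_A$ one has $f\in K_A^{G}$ precisely when $\rho(f)=f\otimes1$ (invariance under the universal point).

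For the implication $\mathcal H=G\Rightarrow K_A^{\mathcal H}=K$ it is enough to show $K_A^{G}=K$. Let $f\in K_A^{G}$, so $\rho(f)=f\otimes1$. Since $\gamma(1\otimes f)=\rho(f)$ and $\gamma(f\otimes1)=f\otimes1$, and $\gamma$ is injective, this gives $1\otimes f=f\otimes1$ in $K_A\otimes_K K_A$. Regarding $K_A$ as a vector space over the field $K$, the equality $1\otimes f=f\otimes1$ forces $1$ and $f$ to be $K$-linearly dependent, whence $f\in K$; the reverse inclusion $K\subseteq K_A^{G}$ is clear.

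The essential implication $K_A^{\mathcal H}=K\Rightarrow\mathcal H=G$ I would prove by contraposition: given a proper closed subgroup $\mathcal H\subsetneq G$, I would exhibit an element of $K_A^{\mathcal H}\setminus K$. By Chevalley's theorem there are a finite-dimensional representation $V$ of $G$ over $C$ and a line $L=C\ell\subset V$ with $\operatorname{Stab}_G(L)=\mathcal H$, and $V$ may be taken inside a tensor construction of the standard representation and its dual. Here it is crucial that $K_A$ is a pseudo\emph{field}: the line $L$ need not possess a genuine $\mathcal H$-invariant vector (for instance when $\mathcal H$ is a Borel subgroup one has $C[G]^{\mathcal H}=C$), but passing to quotients recovers the function field $C(G/\mathcal H)$, which strictly contains $C$ for every proper $\mathcal H$. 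Concretely, realizing the standard representation by the columns of a fundamental matrix $U$ and forming the corresponding tensor, dual and exterior constructions, I would realize $V$ as a $G$-stable $C$-subspace of some $K_A^{N}$ and let $v=(v_1,\dots,v_N)\in K_A^{N}$ be the vector corresponding to $\ell$. Because $\mathcal H$ stabilizes $L$, every $\tau\in\mathcal H$ scales $v$ by a common scalar, so each ratio $v_i/v_j$ lies in $K_A^{\mathcal H}$. Were all these ratios in $K=K_A^{G}$, then $G$ would fix them, hence scale $v$, hence stabilize $L$, contradicting $\operatorname{Stab}_G(L)=\mathcal H\subsetneq G$. Thus some $v_i/v_j\in K_A^{\mathcal H}\setminus K$, as required.

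The main obstacle is to carry out this realization step rigorously in the present non-integral setting. By Definition~\ref{def:pseudofield} the ring $R_A$ is not a domain but a finite product of fields cyclically permuted by $\phi$ through the idempotents $e_1,\dots,e_d$, so the torsor isomorphism and the embedding of representations must be handled componentwise, and one must verify at each step that the denominators $v_j$ involved remain nonzerodivisors, so that division inside $K_A$ is legitimate. Once this bookkeeping is in place, the two implications combine to yield the stated equivalence.
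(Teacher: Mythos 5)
The paper does not actually prove this proposition; it is quoted verbatim from van der Put--Singer (Lemma 1.28 of \cite{VdPdifference}), so the only fair comparison is with the standard argument there. Your overall strategy is exactly that standard one: the torsor isomorphism $R_A\otimes_K R_A\cong R_A\otimes_C C[\G]$ for the easy implication, and Chevalley's theorem plus realization of constructions of linear algebra inside $K_A^N$ for the hard one. The first implication as you write it is complete and correct: $\rho(f)=f\otimes 1$ pulled back through $\gamma$ gives $f\otimes 1=1\otimes f$ in $K_A\otimes_K K_A$, hence $f\in K$ by $K$-linear algebra.

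There is, however, a genuine gap in the second implication, and it sits precisely at the point you defer as ``bookkeeping.'' Your argument needs some coordinate $v_j$ of the semi-invariant vector $v\in K_A^N$ to be \emph{invertible} in $K_A$ (in a pseudofield, invertible and nonzerodivisor coincide), since otherwise none of the ratios $v_i/v_j$ exists and you produce no element of $K_A^{\mathcal H}$ at all. This is not automatic and is exactly where the difference setting departs from the differential one: with $K_A=e_1K_A\oplus\cdots\oplus e_dK_A$ a product of fields, a nonzero vector such as $v=\bigl(e_1a,\,e_2b,\,0,\dots,0\bigr)$ has no nonzerodivisor coordinate. The relation $\phi(v)=A'v$ with $A'\in\GL_N(K)$ does force $e_kv\neq 0$ for every $k$ (since $\phi$ permutes the idempotents cyclically and $A'$ is invertible), but that only gives, for each component $k$, \emph{some} index $i(k)$ with $e_kv_{i(k)}\neq 0$; it does not give a single index $i$ working for all $k$, and the obvious repair $u=\sum_k e_kv_{i(k)}$ fails to be $\mathcal H$-semi-invariant because elements of the Galois group may permute the idempotents. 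So the step you call bookkeeping requires an actual idea (van der Put--Singer handle it by a more careful analysis of the torsor over the non-connected base, and one can alternatively first quotient by the normal subgroup acting trivially on the idempotents to reduce to the case where $K_A$ is a field); as written, the hard implication is not established. A secondary, smaller imprecision: having found one invertible $v_j$, your concluding contradiction needs all ratios $v_i/v_j$ to lie in $K$ to force $\tau(v\otimes 1)\in L\otimes_C B$ with a scalar in $B$ rather than in $K_A\otimes_C B$; this uses the $K_A\otimes_C B$-linear independence of a $C$-basis of $W$ (a Casoratian-type argument relying on $K_A^\phi=C$), which you should state.
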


We will also need the following result. 

\begin{prop}[{\cite[Theorem 3]{FrankePicardVessiottheory}}, {\cite[Theorem 12.6]{AmanoMasuokaTakeuchi:HopfPVtheory}}]
\label{cor:Galoisgroupconnected}
If moreover $K_A$ is a field, then:

\smallskip

\begin{itemize}
\item $\Gal(K_A|K)$ is connected if and only if $K$ is relatively algebraically closed in $K_A$.

\smallskip

\item If $\mathcal{N}$ is a normal closed subgroup of $\Gal(K_A|K)$, then $K_A^{\mathcal{N}}$ is a 
Picard-Vessiot extension 
for some system $\phi(Y)=A'Y$, where $A' \in \GL_{n'}(K)$ and $n'\leq n$ is a positive integer.
\end{itemize}
\end{prop}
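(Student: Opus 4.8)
The plan is to derive both assertions from the fundamental theorem of the Galois theory of linear $\phi$-equations, and to establish the second assertion first, since the first one will then follow formally from it together with the Galois correspondence. Throughout I write $\G=\Gal(K_A|K)$, viewed (since $C$ is algebraically closed of characteristic zero) as an algebraic group over $C$ through the embedding $\G\hookrightarrow\GL_{n,C}$ attached to a fundamental matrix $U$, and I recall that $\tau\in\G(C)$ acts on $R_A$, hence on $K_A=\Quot(R_A)$, by $K$-$\phi$-automorphisms satisfying $\tau(U)=U[\tau]_U$.

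For the second assertion, let $\mathcal{N}$ be a normal closed subgroup and set $M=K_A^{\mathcal{N}}$. First I would form the quotient $\G/\mathcal{N}$, again an affine algebraic group over $C$, and choose a faithful finite-dimensional representation $\rho\colon\G/\mathcal{N}\hookrightarrow\GL_{n'}(C)$; composing with the projection yields $\pi\colon\G\to\GL_{n'}$ with $\ker\pi=\mathcal{N}$. Through the Tannakian dictionary of the theory, which matches representations of $\G$ with $\phi$-modules over $K$ split by $K_A$, the representation $\pi$ corresponds to a linear system $\phi(Y)=A'Y$ with $A'\in\GL_{n'}(K)$ whose solutions lie in $K_A$; let $U'\in\GL_{n'}(K_A)$ be a fundamental matrix and $M'=K(U')\subseteq K_A$ the sub-extension it generates. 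The key point is that $\tau\in\G$ fixes $M'$ pointwise if and only if $[\tau]_{U'}=\pi(\tau)$ is the identity, that is, if and only if $\tau\in\ker\pi=\mathcal{N}$; hence $\Gal(K_A|M')=\mathcal{N}$, and the Galois correspondence forces $M'=K_A^{\mathcal{N}}=M$. Since $M$ is generated over $K$ by a fundamental matrix of $\phi(Y)=A'Y$ and $M^\phi=K_A^\phi=C$, it is a Picard-Vessiot extension for this system. (Here the order $n'$ is the dimension of the chosen faithful representation of $\G/\mathcal{N}$; controlling it requires care in the choice of that representation.)

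For the first assertion I claim that $K_A^{\G^\circ}=\widetilde{K}$, the relative algebraic closure of $K$ in $K_A$, where $\G^\circ$ denotes the identity component. Granting this, the equivalence is immediate: $\G$ is connected exactly when $\G^\circ=\G$, which by the Galois correspondence (Proposition \ref{prop:Galois correspondence}, applied to the closed subgroup $\G^\circ$) amounts to $K_A^{\G^\circ}=K$, i.e. to $\widetilde{K}=K$, i.e. to $K$ being relatively algebraically closed in $K_A$. To prove the claimed equality I argue by two inclusions. If $\alpha\in K_A$ is algebraic over $K$, then the orbit map $\tau\mapsto\tau(\alpha)$ sends $\G$ into the finite set of roots of the minimal polynomial of $\alpha$ over $K$ (each $\tau$ fixing $K$); being connected, $\G^\circ$ has connected, hence single-valued, image in this finite set, so it fixes $\alpha$, giving $\widetilde{K}\subseteq K_A^{\G^\circ}$. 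Conversely $\G^\circ$ is a normal closed subgroup of finite index, so by the second assertion $K_A^{\G^\circ}$ is a Picard-Vessiot extension of $K$ with finite Galois group $\G/\G^\circ$; such an extension is a finite field extension of $K$, hence algebraic, giving $K_A^{\G^\circ}\subseteq\widetilde{K}$.

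I expect the main obstacle to be the second assertion, and within it the passage from the abstract quotient $\G/\mathcal{N}$ to a genuine linear system over $K$ whose Picard-Vessiot extension is exactly $M$: this rests on the representation-theoretic description of $\phi$-modules split by $K_A$ together with the precise compatibility $\tau(U')=U'\pi(\tau)$ between the functorial Galois action and the chosen fundamental matrix, which is the delicate ingredient to transpose to the difference (rather than differential) setting. By contrast, once the second assertion and the Galois correspondence are available, the first assertion is essentially formal, relying only on the elementary fact that a connected group acts trivially on a finite orbit.
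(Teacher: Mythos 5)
The paper offers no proof of this proposition: it is quoted from Franke and from Amano--Masuoka--Takeuchi, so there is no internal argument to compare against. Your proof follows the standard route taken in those sources. For the second item, the Tannakian equivalence between representations of $\G=\Gal(K_A|K)$ and $\phi$-modules over $K$ split by $K_A$, applied to a faithful representation of $\G/\mathcal{N}$, together with the compatibility $\tau(U')=U'\pi(\tau)$, correctly identifies $K(U')$ with $K_A^{\mathcal{N}}$ via the Galois correspondence (note that only the special case of the correspondence recalled in Proposition \ref{prop:Galois correspondence} is needed, applied to the Picard--Vessiot extension $K_A|K(U')$). For the first item, your two inclusions for $K_A^{\G^\circ}=\widetilde{K}$ are both sound: a connected group acting algebraically on the finite set of conjugates of an algebraic element acts trivially, and the second item applied to $\G^\circ$ shows $K_A^{\G^\circ}$ has transcendence degree $\dim(\G/\G^\circ)=0$ over $K$, hence is algebraic. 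This is a correct and complete argument for the substantive content of the proposition.

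The one point your construction does not deliver is the bound $n'\le n$, which you flag yourself; this is not a repairable defect of your method but an overstatement in the proposition as printed. Indeed, the Galois group of $K_A^{\mathcal{N}}|K$ is $\G/\mathcal{N}$, and a Picard--Vessiot extension for a system of order $n'$ has its Galois group realized as a closed subgroup of $\GL_{n'}$, so $n'$ is bounded below by the minimal dimension of a faithful representation of $\G/\mathcal{N}$. Taking $n=2$, $\G=\mathrm{SL}_2$ and $\mathcal{N}=\{\pm I\}$ its center, the quotient $\mathrm{PGL}_2$ admits no faithful representation of dimension $\le 2$ (every two-dimensional representation of $\mathrm{SL}_2$ on which $-I$ acts trivially is trivial), so necessarily $n'\ge 3>n$. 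The inequality $n'\le n$ therefore cannot hold in general; the cited references assert the result without it, and it is harmless for this paper since the only use of the proposition, in the proof of Proposition \ref{prop:generalcase}, does not exploit the size of $n'$.
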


%%%%%%%%%%%%%%%%%%%%%%%%%%%%%%%%%
\subsection{$\sigma$-Galois theory of linear difference equations }
\label{subsec:paramdiffgaldiscrete}

In this section, we consider rings endowed with two endomorphisms that commute. 
More formally, a  $\sphi$-ring is a triple $(R,\phi,\s)$ where $R$ is a ring and $(\phi,\s)$ 
is a pair of endomorphisms of $R$ 
that commutes. The notions of Section \ref{sec: operator} extend to $\sphi$-rings in a straightforward fashion.  
Given a $\sphi$-field $K$ and $A \in \GL_n(K)$, the $\s$-Galois theory developed in \cite{OvWib}
aims to understand the algebraic relations between  the solutions of $\phi(Y)=AY$ and their successive transforms 
with respect to $\s$ from a Galoisian point of view. In this section,  we assume that $K$ is a $\sphi$-field of characteristic zero such that $C=K^\phi$ is algebraically closed. Note that $C$ is a $\s$-field because $\s$ and $\f$ commute.

 A $\phi$-pseudo $\sigma$-field $L$ is a $\sphi$-ring that is a $\phi$-pseudo field.  
 The following definition is concerned with the notion of \emph{minimal ring of solutions} 
 in the context of parametrized difference equations. 
 It summarizes in our context \cite[Definition 2.18 and Proposition 2.21]{OvWib}.

\begin{defi} \label{defi:sPVring}
Let  $A \in \GL_n(K)$. A $\phi$-pseudo $\sigma$-field  extension $L_A$ of $K$ is a 
$\sigma$-Picard-Vessiot extension for $\phi(Y)=AY$ over $K$  if 
there exists $U \in \GL_n(L_A)$ such that $\phi(U)=AU$, $L_A=K\langle U\rangle_{\s}$, 
and $L_A^{\phi}=K^\phi$. The $K$-$\sphi$-algebra $S_A= K\{U, \frac{1}{\det(U)}  \}_{\s}$ is called a 
$\s$-Picard-Vessiot ring for   $\phi(Y)=AY$ (over $K$). 
The $\phi$-ring $S_A$ is  $\phi$-simple and $L_A$ is the total quotient ring  of $S_A$.
\end{defi}

 The following definition introduces the $\s$-Galois group.

 \begin{defi}[\cite{OvWib}, Definition 2.50]\label{def:spgaloisgroup}
 Let $A \in \GL_n(K)$ and let ${L_A=K\langle U\rangle_{\s}}$ be a $\s$-Picard-Vessiot extension for 
 $\phi(Y)=AY$. Set $S_A=K\{U,\frac{1}{\det(U)}\}_{\s}$. The $\s$-Galois $\sGal(L_A|K)$ of $L_A$ over $K$ is the functor from the category of $C$-$\s$-algebras to the category of groups that associates to any $C$-$\s$-algebra $B$ the group of all $K\otimes_C B$-$\f\s$-automorphism of $S_A\otimes_C B$.
Here $\phi$ acts as the identity on $B$.
 \end{defi}

 It is proved in \cite[Lemma 2.51]{OvWib} that this functor is represented by a finitely $\s$-generated $C$-$\s$-algebra.
 Therefore, $\sGal(L_A|K)$ is a $\s$-algebraic group over $C$ in the sense of Definition \ref{defi: salgebraic group}. For a brief introduction to $\s$-algebraic groups we refer to Section \ref{sec:sigmagroupscheme} of the appendix. The $C$-$\s$-algebra $C\{G\}$ representing $G=\sGal(L_A|K)$ can explicitly be described as $C\{G\}=(S_A\otimes_K S_A)^\f$.
 Moreover  by \cite[Lemma 2.41]{OvWib},
 \begin{equation} \label{eq:torsor}
 S_A\otimes_K S_A=S_A\otimes_C C\{G\}.
 \end{equation}
If $U\in\GL_n(S_A)$ is a fundamental  matrix for $\f(Y)=AY$, then $C\{G\}=(S_A\otimes_K S_A)^\f=C\{Z,\frac{1}{\det(Z)}\}_\s$ where $Z=(U\otimes 1)^{-1}(1\otimes U)\in\GL_n(S_A\otimes_K S_A)$.

For a $C$-$\s$-algebra $B$, the action of $\tau\in\sGal(L_A|K)(B)$ on $S_A\otimes_C B$ is determined by $\tau(U\otimes 1)=(U\otimes 1)[\tau]_U$, where $[\tau]_U$ denotes the image of $Z$ in $\GL_n(B)$ under the morphism $C\{Z,\frac{1}{\det(Z)}\}_\s\to B$ corresponding to $\tau$ under $G(B)\simeq \Hom(C\{Z,\frac{1}{\det(Z)}\}_\s,B)$. The assignment $G(B)\to \GL_n(B),\ \tau\mapsto [\tau]_U$ identifies $G$ with a $\s$-closed subgroup of $\GL_{n,C}$, in the sense of Definition~\ref{defi: sclosed subgroup}.

In this setting, there is a complete Galois correspondence. However, we will only need the following special case of \cite[Theorem 2.52]{OvWib}.
 
\begin{prop}\label{propo:spgaloiscorresptransdeg}
	With notation as in Definitions \ref{defi:sPVring} and \ref{def:spgaloisgroup}, we have
	$$\{s\in S_A|\ \tau(s\otimes 1)=s\otimes 1, \ \forall\  \tau\in \sGal(L_A|K)(B), \ \forall \text{ $C$-$\s$-algebras } B \}=K.$$
\end{prop}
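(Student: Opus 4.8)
The inclusion $K\subseteq\{s\in S_A\mid \tau(s\otimes 1)=s\otimes 1\ \forall\tau,\forall B\}$ is immediate, since every $\tau\in\sGal(L_A|K)(B)$ is by definition a $K\otimes_C B$-$\fs$-automorphism and thus fixes $K\otimes 1$ pointwise. The plan is therefore to establish the reverse inclusion, and the whole argument rests on the torsor isomorphism \eqref{eq:torsor}, namely $S_A\otimes_K S_A\cong S_A\otimes_C C\{G\}$. First I would repackage this isomorphism as a coaction. Writing $\mu\colon S_A\otimes_K S_A\xrightarrow{\sim}S_A\otimes_C C\{G\}$ for the isomorphism of \eqref{eq:torsor} (which sends $a\otimes 1\mapsto a\otimes 1$ and whose inverse is $a\otimes c\mapsto (a\otimes 1)c$ for $a\in S_A$ and $c\in C\{G\}=(S_A\otimes_K S_A)^\f\subset S_A\otimes_K S_A$), I set $\rho\colon S_A\to S_A\otimes_C C\{G\}$, $\rho(s)=\mu(1\otimes s)$. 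Being the composite of the ring homomorphism $s\mapsto 1\otimes s$ with the isomorphism $\mu$, the map $\rho$ is a homomorphism of $C$-$\s$-algebras, and on a fundamental matrix it reads $\rho(U)=(U\otimes 1)Z$ with $Z=(U\otimes 1)^{-1}(1\otimes U)\in\GL_n(C\{G\})$, exactly the matrix governing the Galois action in Definition \ref{def:spgaloisgroup}.

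Next I would reinterpret the fixed-point condition through the universal point of $G=\sGal(L_A|K)$. For a $C$-$\s$-algebra $B$ and $\tau\in G(B)$, with corresponding homomorphism $\alpha_\tau\colon C\{G\}\to B$, the description of the action via $Z$ gives $\tau(s\otimes 1)=(\id_{S_A}\otimes\alpha_\tau)(\rho(s))$ for every $s\in S_A$: this holds on the generators $U,\det(U)^{-1}$ and their $\s$-transforms by the formula above, hence on all of $S_A$ because both sides are $\fs$-algebra homomorphisms. Taking $B=C\{G\}$ and $\tau$ the universal point $\alpha_\tau=\id_{C\{G\}}$ shows that fixity under this single $\tau$ already forces $\rho(s)=s\otimes 1$; conversely, $\rho(s)=s\otimes 1$ makes $\tau(s\otimes 1)=s\otimes 1$ for every $\tau$ over every $B$. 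Thus the fixed set in the statement equals $\{s\in S_A\mid \rho(s)=s\otimes 1\}$.

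Finally I would transport this equality back across $\mu$. Since $\mu^{-1}(a\otimes c)=(a\otimes 1)c$, one has $\mu^{-1}(s\otimes 1)=s\otimes 1$, so the condition $\rho(s)=s\otimes 1$ is equivalent, after applying $\mu^{-1}$, to $1\otimes s=s\otimes 1$ in $S_A\otimes_K S_A$. The proof then closes by faithfully flat descent: as $K$ is a field and $S_A\supseteq K$ is nonzero, $S_A$ is faithfully flat over $K$, and the Amitsur sequence $0\to K\to S_A\xrightarrow{\,s\mapsto s\otimes 1-1\otimes s\,}S_A\otimes_K S_A$ is exact. Hence $1\otimes s=s\otimes 1$ forces $s\in K$, which is the desired inclusion.

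The conceptual core is the torsor relation \eqref{eq:torsor}, which is already available to us; granting it, the argument is formal. The step that will require the most care is the bookkeeping in the first two paragraphs: identifying $\rho$ with the coaction underlying the Galois action, checking that $\tau(s\otimes 1)=(\id\otimes\alpha_\tau)(\rho(s))$ is compatible with both the $\f$- and the $\s$-structures (in particular that $\rho$ lands in $S_A\otimes_C C\{G\}$ with $C\{G\}$ the ring of $\f$-invariants and that $\rho$ commutes with $\s$, so the action formula propagates from $U$ to its $\s$-transforms), and confirming that the universal point genuinely detects all automorphisms over all $B$. Once the fixed-point condition has been converted into the single equality $1\otimes s=s\otimes 1$, the descent step is routine precisely because the base $K$ is a field.
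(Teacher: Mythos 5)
Your proof is correct. The paper itself gives no argument for this proposition --- it is quoted as a special case of \cite[Theorem 2.52]{OvWib} --- so there is nothing internal to compare against; but your route (rewrite the torsor identity \eqref{eq:torsor} as a coaction $\rho(s)=\mu(1\otimes s)$, check $\tau(s\otimes 1)=(\id\otimes\alpha_\tau)\rho(s)$ on the generators $U$, $\det(U)^{-1}$ and their $\s$-transforms, reduce the quantifier over all $B$ and $\tau$ to the universal point $\id_{C\{G\}}\in G(C\{G\})$, and finish with the Amitsur sequence for the faithfully flat extension $K\subseteq S_A$) is exactly the standard mechanism behind the cited theorem, and every ingredient you invoke is available in the paper. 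The only points worth double-checking in a write-up are the ones you already flag: that $\mu$ is $\s$-equivariant and $\f$-equivariant (with $\f$ acting as $\f\otimes\id$ on $S_A\otimes_C C\{G\}$ since $C\{G\}=(S_A\otimes_K S_A)^\f$), so that agreement on the fundamental matrix propagates to all of $S_A=K\{U,\det(U)^{-1}\}_\s$; these are routine.
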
  
 
 The following definition introduces the difference analog of the transcendence degree of field extensions, see
 \cite[ Definition 4.1.7]{Levin}.

\begin{defn}\label{defn:sigmaalgandsigmatransdegree}
Let $L|K$ be a $\sigma$-field extension. Then $a_1,\dots, a_d \in L$ are  
\emph{$\sigma$-algebraically independent} over $K$  if the elements $\sigma^{i}(a_{j})$ ($i\in \N,\ 1\leq j \leq d)$ 
are algebraically independent over $K$. A single element $a\in L$ is also called \emph{$\s$-transcendental} over $K$ if it is $\sigma$-algebraically independent over $K$. Otherwise, $a$ is called \emph{$\s$-algebraic} over $K$.\par 
A $\sigma$-transcendence  basis  of $L$ over $ K$ is a 
maximal  subset of $L$ formed by $\sigma$-algebraically  independent elements over $K$. 
Any two $\sigma$-transcendence bases of $L|K$ have the same cardinality (\cite[Proposition 4.1.6]{Levin}) and so we can define the 
\emph{$\sigma$-transcendence degree} $\strdeg(L|K)$ of $L|K$ as the cardinality of any 
$\sigma$-transcendence basis of $L$ over $K$. 
\end{defn}

The following lemma will be used several times in the sequel.

\begin{lemma}\label{lem2}
Let $L|K$ be a $\sigma$-field extension and let $s\geq 1$ be an integer. Then $f\in L$ is $\s$-algebraic over $K$ if and only if $f$ is $\s^{s}$-algebraic over $K$.
\end{lemma}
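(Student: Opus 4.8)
The plan is to reduce both implications to the following transcendence-degree criterion, which I would establish first: for any field extension $M|K$ equipped with an endomorphism $\tau$ satisfying $\tau(K)\subseteq K$ and $\tau(M)\subseteq M$, an element $f\in M$ is $\tau$-algebraic over $K$ if and only if the field $K(\tau^i(f):i\geq 0)$ has finite transcendence degree over $K$ (this is standard in difference algebra, cf.\ \cite{Levin}). Granting this criterion for the two endomorphisms $\tau=\sigma$ and $\tau=\sigma^s$ (both preserve $K$, since $\sigma$ does), the lemma reduces to comparing the transcendence degrees of $K(\sigma^i(f):i\geq 0)$ and of its subfield $K(\sigma^{is}(f):i\geq 0)$.

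The key step is the criterion itself, and in particular the fact that $\tau$-algebraicity forces the \emph{whole} tower $K(\tau^i(f):i\geq 0)$ to have finite transcendence degree. If $f$ is $\tau$-transcendental, the $\tau^i(f)$ are algebraically independent and the transcendence degree is infinite, so one implication is clear. Conversely, suppose $f$ is $\tau$-algebraic, and let $N$ be minimal such that $f,\tau(f),\dots,\tau^N(f)$ are algebraically dependent over $K$; then $f,\dots,\tau^{N-1}(f)$ are algebraically independent and $\tau^N(f)$ is algebraic over $E:=K(f,\dots,\tau^{N-1}(f))$. Applying the endomorphism $\tau^k$ to this relation and using $\tau^m(K)\subseteq K$, we find that $\tau^{N+k}(f)$ is algebraic over $\tau^k(E)\subseteq K(\tau^k(f),\dots,\tau^{N+k-1}(f))$. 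An induction on $k$ (whose hypothesis is that all $\tau^j(f)$ with $j<N+k$ are algebraic over $E$) then shows that every $\tau^{N+k}(f)$ is algebraic over $E$. Hence $K(\tau^i(f):i\geq 0)$ is algebraic over $E$ and has transcendence degree at most $N$ over $K$. This stabilization argument is the heart of the matter, and its only genuine input is that $\tau$ maps $K$ into $K$.

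With the criterion in hand, both implications are short. If $f$ is $\sigma^s$-algebraic over $K$, then it is $\sigma$-algebraic essentially for free: a nontrivial algebraic relation among $f,\sigma^s(f),\dots,\sigma^{Ms}(f)$ is in particular a nontrivial algebraic relation among finitely many of the transforms $\sigma^j(f)$, so the family $(\sigma^j(f))_{j\geq 0}$ fails to be algebraically independent. For the converse, suppose $f$ is $\sigma$-algebraic; by the criterion, $d:=\trdeg\bigl(K(\sigma^i(f):i\geq 0)\mid K\bigr)$ is finite. The $d+1$ elements $f,\sigma^s(f),\sigma^{2s}(f),\dots,\sigma^{ds}(f)$ all lie in this field and must therefore be algebraically dependent over $K$, which is precisely a nontrivial relation witnessing that $f$ is $\sigma^s$-algebraic over $K$.

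The main obstacle is the stabilization argument establishing finiteness of the transcendence degree of the infinite tower from a single algebraic relation; once this is secured, the equivalence becomes formal. One should only note that $\sigma$ is assumed to be an endomorphism, not necessarily surjective, but the argument uses exclusively the inclusion $\sigma(K)\subseteq K$, so no additional hypothesis is required.
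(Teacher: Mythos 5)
Your proof is correct and follows essentially the same route as the paper: both directions reduce to the fact that $f$ is $\tau$-algebraic over $K$ exactly when the $\tau$-field it generates has finite transcendence degree over $K$, combined with the inclusion $K\langle f\rangle_{\s^s}\subseteq K\langle f\rangle_{\s}$. The only difference is that you prove this finiteness criterion from scratch via the stabilization argument, whereas the paper simply cites it from Levin's book (\cite[Corollary 4.1.18]{Levin}); your argument for it is sound, since it only uses $\s(K)\subseteq K$ and the injectivity of a field endomorphism.
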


\begin{proof}
 If $f$ is $\s$-algebraic over $K$ then the transcendence degree of $K\langle f \rangle_{\s}$ over $K$ is finite. Since $K \subset K\langle f \rangle_{\s^s} \subset K\langle f \rangle_{\s}$, the transcendence degree of $K\langle f \rangle_{\s^s}$ over $K$ is finite and $f$ is $\s^s$-algebraic over $K$.  The converse is obviously true, if $f$ is $\s^s$-algebraic over $K$, it is $\s$-algebraic over $K$.\end{proof}

The $\s$-dimension $\sdim(G)$ of a $\s$-algebraic group is defined in Definition~\ref{defi:sdim}.

\begin{prop}\label{prop:transdeg}
 In addition to the notation of Definitions \ref{defi:sPVring} and \ref{def:spgaloisgroup}, assume that $L_A$ is a field. Then $\sdim(\sGal(L_A|K))=\strdeg(L_A|K)$.  
 \end{prop}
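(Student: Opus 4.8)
The plan is to run the standard ``torsor computation'' adapted to the $\s$-setting. Since $L_A$ is assumed to be a field, $S_A$ is a $\s$-domain with $\Quot(S_A)=L_A$, and recall that $C=K^\phi$ is algebraically closed of characteristic zero. The starting point is the fundamental torsor identity \eqref{eq:torsor}, namely $S_A\otimes_K S_A=S_A\otimes_C C\{G\}$, where $G=\sGal(L_A|K)$ and the identification is $S_A$-linear for the left tensor factor. Inverting the nonzero elements of this left factor turns it into $L_A$ and yields an isomorphism of $L_A$-$\sphi$-algebras
\[
L_A\otimes_K S_A\;\cong\;L_A\otimes_C C\{G\}.
\]
I would then compute the $\s$-transcendence degree over $L_A$ of a $\s$-integral component of this ring in two ways: evaluating the right-hand side should give $\sdim(G)$, while evaluating the left-hand side should give $\strdeg(L_A|K)$.

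For the right-hand side, let $t_1,\dots,t_d$ be a $\s$-transcendence basis of $\Quot(C\{G\})$ over $C$, so that $d=\sdim(G)$ and the elements $\s^i(t_j)$ ($i\in\N$, $1\le j\le d$) are algebraically independent over $C$. The subfield $C(\{\s^i(t_j)\})$ is then purely transcendental over $C$, hence $L_A\otimes_C C[\{\s^i(t_j)\}]$ is a polynomial ring over $L_A$; consequently the $\s^i(t_j)$ remain algebraically independent over $L_A$ inside $L_A\otimes_C C\{G\}$. Since $C$ is algebraically closed, this base change is regular and $L_A\otimes_C C\{G\}$ is a $\s$-domain. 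As every $\s$-generator of $C\{G\}$ is $\s$-algebraic over $C\langle t_1,\dots,t_d\rangle_\s$, the same elements form a $\s$-transcendence basis of $L_A\otimes_C C\{G\}$ over $L_A$, whence the $\s$-transcendence degree of the right-hand side over $L_A$ equals $d=\sdim(G)$.

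For the left-hand side, I would regard $L_A\otimes_K S_A$ as the base change of the $\s$-domain $S_A$ (whose field of fractions is $L_A$) along the inclusion $K\hookrightarrow L_A$. In characteristic zero every field extension is separable, so that forming the generic fibre preserves the $\s$-transcendence degree: a $\s$-integral component of $L_A\otimes_K S_A$ has $\s$-transcendence degree over $L_A$ equal to $\strdeg(L_A|K)$. Concretely, a $\s$-transcendence basis of $L_A$ over $K$, read off from the entries of the fundamental matrix $U$ and from $\det(U)^{-1}$, stays $\s$-algebraically independent after the base change precisely because $K$ has characteristic zero. Comparing the two computations through the displayed isomorphism then yields $\sdim(G)=\strdeg(L_A|K)$, as claimed.

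The main obstacle, and the point that requires the most care, is the base-change invariance of the $\s$-transcendence degree together with the ring-theoretic bookkeeping forced by the fact that neither $S_A\otimes_K S_A$ nor $L_A\otimes_C C\{G\}$ need be a domain \emph{a priori}. One must pass to suitable $\s$-irreducible components and to total quotient rings, checking that these operations commute with the localization of the left factor and do not alter the relevant $\s$-transcendence degrees; here it is useful that $G$ is a group, so that all its $\s$-irreducible components share the same $\s$-dimension. The freeness exploited on the right-hand side rests on the pure transcendentality of $C(\{\s^i(t_j)\})|C$ (with algebraic closedness of $C$ ensuring the relevant tensor product is a $\s$-domain), whereas on the left-hand side the base field $K$ is not algebraically closed, so the invariance must instead be extracted from separability in characteristic zero, the difference-algebraic analogue of the statement that the dimension of the generic fibre equals the relative dimension.
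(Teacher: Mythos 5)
Your strategy --- localize the torsor identity \eqref{eq:torsor} at the left factor and compare the $\s$-transcendence degrees over $L_A$ of the two sides of $L_A\otimes_K S_A\cong L_A\otimes_C C\{G\}$ --- is not the paper's argument. The paper disposes of the statement in two citations: \cite[Lemma~2.53]{OvWib} gives $\sdim(\sGal(L_A|K))=\sdim(S_A)$, and \cite[Proposition~3.1]{Wibmer:OnTheDimension} identifies the $\s$-dimension of a finitely $\s$-generated $\s$-domain with the $\s$-transcendence degree of its fraction field, which applies to $S_A$ precisely because $L_A$ is a field. Your torsor computation is in effect a sketch of the proof of the first of these citations, so the route is reasonable in spirit, but as written it has genuine gaps at exactly the places you yourself flag as delicate.

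First, the opening move on the right-hand side, ``let $t_1,\dots,t_d$ be a $\s$-transcendence basis of $\Quot(C\{G\})$ over $C$, so that $d=\sdim(G)$,'' silently assumes two things. One: that $C\{G\}$ is a $\s$-domain, so that $\Quot(C\{G\})$ is a $\s$-field. Under the hypotheses of the proposition (only that $L_A$ is a field; Condition $\cH$ is not assumed) this is not known --- the paper's Lemma~\ref{lemma:relatalgclosedbasefieldsigmaintegralPPVgroup}, which would give $\s$-integrality of $G$, requires Condition $\cH$ --- and your fallback, that all $\s$-irreducible components of a $\s$-algebraic group share the same $\s$-dimension so one may pass to a component, is itself a nontrivial assertion you do not prove. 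Two: the equality $d=\sdim(G)$, where $\sdim(G)$ is \emph{defined} as the filtration limit of Definition~\ref{defi:sdim}, is precisely the content of \cite[Proposition~3.1]{Wibmer:OnTheDimension}; you have not avoided that input, you have relocated it from $S_A$ (where the $\s$-domain hypothesis holds for free) to $C\{G\}$ (where it does not). Second, on the left-hand side, the base-change invariance of the $\s$-transcendence degree along $K\hookrightarrow L_A$ --- that a $\s$-integral component of $L_A\otimes_K S_A$ has $\s$-transcendence degree $\strdeg(L_A|K)$ over $L_A$ --- is asserted via ``separability in characteristic zero'' but never proved; it is the difference-algebraic analogue of invariance of dimension under base field extension and needs an actual argument, as does the matching, under the displayed isomorphism, of the components chosen on the two sides. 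None of these steps is hopeless, but each is a real piece of work, and together they constitute the substance of the two results the paper cites.
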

\begin{proof}
	It is shown in \cite[Lemma 2.53]{OvWib} that $\sdim(\sGal(L_A|K))=\sdim(S_A)$. By \cite[Proposition 3.1]{Wibmer:OnTheDimension}, the $\s$-dimension of a finitely $\s$-generated $K$-$\s$-algebra, that is an integral domain with $\s$ injective, agrees with the $\s$-transcendence degree of its field of fractions. Therefore $\sdim(S_A)=\strdeg(L_A/K)$.
\end{proof}
 
The following proposition explains the connection between the $\s$-Galois theory of linear difference equations and the (usual) Galois theory of linear difference equations. The meaning of ``Zariski dense $\s$-closed subgroup'' is explained in Definitions \ref{defi: sclosed subgroup} and \ref{defi: Zariski dense}.
 
\begin{prop}
	\label{propo:schematicalgebraicgaloisgroupcomparaison}
 In addition to the notation of Definitions \ref{defi:sPVring} and \ref{def:spgaloisgroup}, assume that $L_A$ is a field.
Then $K_A=K(U)\subset L_A$ is a Picard-Vessiot extension for $\f(Y)=AY$
and the $\s$-Galois group $G=\sGal(L_A|K)$ is a Zariski dense $\s$-closed subgroup of the Galois group $\G=\Gal(K_A|K)$.
\end{prop}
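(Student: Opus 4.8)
The plan is to verify the two assertions separately: the Picard-Vessiot claim follows directly from the definitions, while the comparison of Galois groups rests on the canonical map between the $\phi$-invariants of $R_A\otimes_K R_A$ and of $S_A\otimes_K S_A$. First I would check that $K_A=K(U)$ is a Picard-Vessiot extension for $\phi(Y)=AY$. Since $L_A$ is a field and $K[U,\det(U)^{-1}]\subset L_A$ is an integral domain, its total quotient ring $K_A=K(U)$ is simply the subfield of $L_A$ generated by the entries of $U$; in particular $K_A$ is a $\phi$-pseudo field and $U\in\GL_n(K_A)$ is a fundamental matrix. For the constants, the inclusions $K\subset K_A\subset L_A$ give $K^\phi\subset K_A^\phi\subset L_A^\phi$, and $L_A^\phi=K^\phi$ by Definition~\ref{defi:sPVring}; hence $K_A^\phi=K^\phi$. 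Thus $K_A$ satisfies the defining conditions of a Picard-Vessiot extension, and by Proposition~\ref{prop:PVRingPVextconstantalgclos} the $\phi$-stable subring $R_A:=K[U,\det(U)^{-1}]\subset K_A$ is the associated Picard-Vessiot ring; note that $R_A\subset S_A$.

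Next I would compare the two Galois groups. For any $C$-$\s$-algebra $B$ and any $\tau\in G(B)$, the relation $\tau(U\otimes 1)=(U\otimes 1)[\tau]_U$ with $[\tau]_U\in\GL_n(B)$ shows that $\tau$ stabilizes $R_A\otimes_C B$; hence $\tau\mapsto\tau_{\mid R_A\otimes_C B}$ sends $\tau$ to a $K\otimes_C B$-$\phi$-automorphism of $R_A\otimes_C B$, that is, to an element of $\G(B)$, where $\G$ is viewed as a functor on $C$-$\s$-algebras by forgetting $\s$. Since $[\tau_{\mid R_A}]_U=[\tau]_U$, this morphism is compatible with the embeddings of $G$ and of $\G$ into $\GL_{n,C}$ afforded by $[\cdot]_U$, so $G\subseteq\G$ as subgroups of $\GL_{n,C}$; as $\G$ is a Zariski closed (hence $\s$-closed) subgroup and $G$ is a $\s$-closed subgroup of $\GL_{n,C}$ contained in $\G$, it follows that $G$ is a $\s$-closed subgroup of $\G$ in the sense of Definition~\ref{defi: sclosed subgroup}. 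On coordinate rings this is seen as follows: the inclusion $R_A\hookrightarrow S_A$ induces, after taking $\phi$-invariants, the comorphism $C[\G]=(R_A\otimes_K R_A)^\phi\to(S_A\otimes_K S_A)^\phi=C\{G\}$, which carries the image of $Z=(U\otimes 1)^{-1}(1\otimes U)$ in $R_A\otimes_K R_A$ to its image in $S_A\otimes_K S_A$; since standard Picard-Vessiot theory gives $C[\G]=C[Z,\det(Z)^{-1}]$ while $C\{G\}=C\{Z,\det(Z)^{-1}\}_\s$, the image of $C[\G]$ $\s$-generates $C\{G\}$, which is the coordinate-ring expression of $\s$-closedness.

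Finally, Zariski density amounts, by Definition~\ref{defi: Zariski dense}, to the injectivity of the comorphism $C[\G]\to C\{G\}$. Since $K$ is a field, $R_A$ and $S_A$ are flat over $K$, so the inclusion $R_A\hookrightarrow S_A$ yields an injection $R_A\otimes_K R_A\hookrightarrow S_A\otimes_K S_A$; as $(-)^\phi$ is left exact, the induced map $C[\G]=(R_A\otimes_K R_A)^\phi\hookrightarrow(S_A\otimes_K S_A)^\phi=C\{G\}$ is again injective, so $G$ is Zariski dense in $\G$. I expect the main obstacle to lie in this middle step: one must carefully reconcile the two presentations of $G$ and $\G$ inside $\GL_{n,C}$ and confirm that the map induced on $\phi$-invariants is genuinely the comorphism of a $\s$-closed embedding $G\hookrightarrow\G$. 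Once that identification is in place, $\s$-generation of $C\{G\}$ by $C[\G]$ follows from the explicit $Z$-generators, and Zariski density from flatness, as above.
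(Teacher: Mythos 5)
Your proposal is correct and follows essentially the same route as the paper: restrict $\s$-Galois automorphisms to $R_A\otimes_C B$ to get the inclusion $G\subset[\s]_C\G$, and reduce Zariski density to the injectivity of $(R_A\otimes_K R_A)^\phi\to(S_A\otimes_K S_A)^\phi$, which holds since $K$ is a field. You merely supply more detail (the verification that $K_A^\phi=K^\phi$, the $\s$-generation of $C\{G\}$ by the image of $C[\G]$) than the paper's terser argument.
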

\begin{proof}
Because $L_A^\f=K$, it is clear that $K_A$ is a Picard-Vessiot extension for $\f(Y)=AY$. For every $C$-$\s$-algebra $B$, every $K\otimes_C B$-$\f\s$-automorphism of $S_A\otimes_C B$, restricts to a $K\otimes_C B$-$\f$-automorphism of $R_A\otimes_C B$. So $G(B)\subset \G(B)$ and we obtain an inclusion $G\subset [\s]_C\G$, where $[\s]_C\G$ is defined in Example \ref{ex1}. As we can see in the paragraph after Definition \ref{defi: Zariski dense}, to prove that $G$ is Zariski dense in $\G$, it suffices to prove that the corresponding map $C[\G]\to C\{G\}$ of coordinate rings is injective. But this map is the inclusion $(R_A\otimes_K R_A)^\f\to (S_A\otimes_K S_A)^\f$, which is clearly injective.
\end{proof}

 %%%%%%%%%%%%%%%%%%%%%%%%%%%%%%%%%%%%
 %%%%%%%%%%%%%%%%%%%%%%%%%%%%%%%%%%%%
 \section{Auxiliary results}\label{sec: aux}
 
 In this section, we gather some auxiliary results needed for the proof of Theorem \ref{thm: main}. 
 
\subsection{Extending the constants} 
We will need the following elementary lemma about solutions of linear difference equations.

\begin{lemma} \label{lemma: fsolutions and base extension}
	Let $K$ be a $\f$-field, $C=K^\f$, $R$ a $K$-$\f$-algebra, and $A\in\GL_n(K)$. Then, for every $C$-algebra $B$ (considered as a $\f$-constant $\f$-ring) one has
	$$\{y\in R^n \ | \ \f(y)=A y\}\otimes_C B=\{y\in (R\otimes_C B)^n \ |\ \f(y)=A y \}.
	$$  
\end{lemma}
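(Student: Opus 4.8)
The plan is to realize both sides as kernels of the same operator and to exploit the flatness of $B$ over the field $C=K^\f$. First I would introduce the $C$-linear map
$$
\Psi\colon R^n\longrightarrow R^n, \qquad y\longmapsto \f(y)-Ay .
$$
This map is $C$-linear because $\f$ is additive with $\f(cy)=c\f(y)$ for every $c\in C=K^\f$, and because $A\in\GL_n(K)$ has entries in $K\subset R$ so that $y\mapsto Ay$ is even $R$-linear. By construction the left-hand side $V:=\{y\in R^n\mid \f(y)=Ay\}$ is exactly $\ker\Psi$, whence an exact sequence of $C$-vector spaces
$$
0\longrightarrow V\longrightarrow R^n \xrightarrow{\ \Psi\ } R^n .
$$

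Next I would tensor this sequence with $B$ over $C$. Since $C$ is a field, $B$ is a flat (indeed free) $C$-module, so the sequence stays exact:
$$
0\longrightarrow V\otimes_C B\longrightarrow R^n\otimes_C B \xrightarrow{\ \Psi\otimes\id_B\ } R^n\otimes_C B .
$$
In particular the natural map $V\otimes_C B\to (R\otimes_C B)^n$ is injective and identifies $V\otimes_C B$ with $\ker(\Psi\otimes\id_B)$.

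The key step, and essentially the only one requiring care, is to verify that under the canonical identification $R^n\otimes_C B\cong (R\otimes_C B)^n$ the base-changed operator $\Psi\otimes\id_B$ coincides with the operator $y\mapsto \f(y)-Ay$ on $(R\otimes_C B)^n$. For the $\f$-part this uses precisely that $B$ is a $\f$-constant $\f$-ring: the $\f$-structure on $R\otimes_C B$ is $\f(r\otimes b)=\f(r)\otimes b$, so that $\f$ on $R\otimes_C B$ equals $\f_R\otimes\id_B$. For the $A$-part one uses that $A$ has entries in $K$, whose image in $R\otimes_C B$ lies in $R\otimes 1$, so that multiplication by $A$ is $(A\cdot{-})\otimes\id_B$. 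Combining the two, $\Psi\otimes\id_B$ is exactly the map $y\mapsto \f(y)-Ay$ on $(R\otimes_C B)^n$, whose kernel is the right-hand side $W:=\{y\in(R\otimes_C B)^n\mid \f(y)=Ay\}$.

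Putting these together yields $V\otimes_C B=\ker(\Psi\otimes\id_B)=W$, which is the claimed equality. I expect the only genuine (and rather minor) obstacle to be the bookkeeping in this final identification, namely keeping track of how $\f$ and the matrix $A$ behave under base change; the substance of the argument is simply that tensoring over the field $C$ is exact.
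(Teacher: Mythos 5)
Your proof is correct and is essentially the paper's argument in more abstract clothing: the paper fixes a $C$-basis $(b_i)$ of $B$, writes $y=\sum y_i\otimes b_i$ with $y_i\in R^n$ uniquely determined, and compares coefficients in $\sum \f(y_i)\otimes b_i=\sum Ay_i\otimes b_i$, which is precisely the hands-on version of your observation that tensoring with the free (hence flat) $C$-module $B$ preserves the kernel of $y\mapsto\f(y)-Ay$. Both arguments rest on the same two identifications you single out, namely that $\f$ acts on $R\otimes_C B$ as $\f_R\otimes\id_B$ because $B$ is $\f$-constant and that multiplication by $A$ is defined over $R\otimes 1$.
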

\begin{proof}
	Clearly $\{y\in R^n \ | \ \f(y)=A y\}\otimes_C B\subset \{y\in (R\otimes_C B)^n \ |\ \f(y)=A y \}$. Let $y$ be contained in the right-hand side and fix a $C$-basis $(b_i)_{i\in I}$ of $B$. Then $y=\sum y_i\otimes b_i$ for some uniquely determined $y_i\in R^n$. We have
	$$\sum \f(y_i)\otimes b_i=\f(y)=Ay=\sum Ay_i\otimes b_i.$$
	Therefore $\f(y_i)=Ay_i$ and so $y$ is contained in the left-hand side.
\end{proof}

 %%%%%%%%%%%%%%%%%%%%%% 
\subsection{Compatible difference systems}

Two difference systems 
$\phi(Y)=AY$ and $\sigma(Y)=\widetilde{A}Y$ over a $\f\s$-field $K$ are \emph{compatible} if
\begin{equation}\label{eq:compatibilitycondition}
\phi(\widetilde{A})A =\sigma(A)\widetilde{A}\,.
\end{equation}
This condition is equivalent to the existence of $K$-$\sphi$-algebra $S$ and $U\in\GL_n(S)$ such that
$\phi(U)=AU$ and $\sigma(U)=\widetilde{A}U$. 
Compatible difference systems over the projective line have been studied 
by Sch\"{a}fke and Singer in \cite{SchaefkeSinger}. 
The following result shows that compatibility is 
a strong constraint for  discrete systems over the projective line. 

\begin{prop}[Theorem 13 in \cite{SchaefkeSinger}]\label{prop:compatibilityimpliesconstantcoeff}
Let $K$ be one of the $\sphi$-fields defined in Cases \textbf{2S}, \textbf{2Q}, and \textbf{2M}. 
Let $A,\widetilde{A} \in \GL_n(K)$ be such that the systems $\phi(Y)=AY$ and $\sigma(Y)=\widetilde{A}Y$ are compatible. 
Then, there exists 
$T \in \GL_n(K)$ such that $\phi(T)AT^{-1} \in \GL_n(\C)$ and $\sigma(T)\widetilde{A}T^{-1} \in \GL_n(\C)$. 
In other words, the systems $\phi(Y)=AY$ and $\sigma(Y)=\widetilde{A}Y$ are simultaneously  equivalent over $K$ 
to difference systems with constant coefficients. 
\end{prop}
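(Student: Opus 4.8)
The plan is to exploit the equivalent formulation of compatibility recorded right after \eqref{eq:compatibilitycondition}: the relation \eqref{eq:compatibilitycondition} holds precisely when there is a $K$-$\sphi$-algebra $S$ and a common fundamental matrix $U\in\GL_n(S)$ with $\phi(U)=AU$ and $\s(U)=\widetilde{A}U$. Since $\s(K)\subset K$ and $\widetilde{A}\in\GL_n(K)$, one checks that the $K$-$\phi$-algebra $R_A=K[U,\det(U)^{-1}]\subset S$ is stable under $\s$, so that $R_A$ is in fact a $\sphi$-subring; in other words the $\phi$-module attached to $A$ carries an extra compatible $\s$-symmetry. The target is to produce $T\in\GL_n(K)$ with $\phi(T)AT^{-1}$ and $\s(T)\widetilde{A}T^{-1}$ both in $\GL_n(\C)$, which is equivalent to factoring the common solution as $U=T^{-1}V$ with $V$ \emph{pure}, meaning that $\phi(V)V^{-1}$ and $\s(V)V^{-1}$ are constant matrices. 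First I would reduce to a generic situation (passing to powers of the operators when convenient) and allow myself to work over a finite ramified extension of $K$ inside $F$, postponing to the very end the descent of $T$ back to $K$.

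The heart of the argument is a normal-form reduction of the $\phi$-system governed by its Newton polygon, together with the fact that the canonical slope filtration is forced to be stable under the compatible $\s$-action. Concretely, in each of the three cases the local classification of $\phi$-difference systems over $K$ (the $q$-difference slope theory of Adams and Sauloy in Case \textbf{2Q}, the Birkhoff–Turrittin theory of shift systems in Case \textbf{2S}, and Dumas' Newton polygon theory for Mahler systems in Case \textbf{2M}) provides a gauge transformation bringing $A$ to a block-graded form indexed by slopes, each graded piece being a constant matrix twisted by a case-specific irregular factor (a $q$-character factor, an exponential/$\Gamma$-type factor, or a $\log$-power factor, respectively). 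A constant-coefficient system is exactly one concentrated in slope $0$ with no nonconstant regular part. Since the slope filtration is canonical, it is preserved by the compatible $\s$-action; comparing how the two commuting operators $\phi$ and $\s$ act on a single pure-slope-$\mu$ graded piece then yields a relation between the $\phi$- and $\s$-parameters. The independence hypothesis (multiplicative independence of $q_1,q_2$ or of $p_1,p_2$, resp. $\Z$-linear independence of $h_1,h_2$) forbids such a relation unless $\mu=0$, eliminating the irregular part and showing the $\phi$-system is regular singular.

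It then remains to treat the regular singular (slope $0$) part, whose invariants are the exponents (eigenvalues of the residual matrix read modulo the relevant value group), the connection/monodromy data, and possible unipotent logarithmic blocks coming from resonances. Here I would argue that the same compatibility, now in the regular singular setting, forces the exponents and the connection matrix to be $\s$-invariant, hence independent of $x$, and that the independence hypothesis again rules out the resonances that could create nontrivial logarithmic blocks; what survives is a genuinely constant matrix, and the accumulated gauge transformations assemble into the desired $T$. The main obstacle, and the step demanding the most care, is exactly this interaction between the normal-form reduction and the $\s$-symmetry: one must (i) choose the normal-form gauge $\s$-equivariantly, (ii) descend the resulting $T$ from the ramified extension back to $K$, and (iii) exclude logarithmic blocks uniformly in the three cases, where the arithmetic of the independence hypothesis enters essentially. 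I expect Case \textbf{2M} to be the most delicate, because of the superexponential iteration and the ramification intrinsic to $x\mapsto x^{p}$.
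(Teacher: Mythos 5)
This proposition is not proved in the paper at all: it is imported verbatim as Theorem~13 of Sch\"afke--Singer \cite{SchaefkeSinger}, so there is no internal argument to compare yours against. Your outline is broadly in the spirit of what Sch\"afke and Singer actually do (local formal classification of the $\phi$-system at the fixed points of the operators, combined with the independence of the parameters to kill the nonconstant invariants), so the \emph{route} is not wrong. But as a proof it has genuine gaps: essentially all of the mathematical content sits in the three steps you yourself flag as ``demanding the most care,'' and each of them is asserted rather than established.

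Concretely: (i) the claim that the slope filtration is ``preserved because it is canonical'' hides the real computation --- $\s$ does not act on the $\phi$-module over $K$ but carries it to its $\s$-twist, and compatibility only provides an isomorphism $\widetilde{A}$ between the module and that twist; moreover $\s$ rescales the relevant valuation (e.g.\ by $p_2$ in Case \textbf{2M}), so one must actually compute how slopes transform under the twist and then extract the arithmetic relation that the independence hypothesis contradicts. This is the heart of the theorem and is not carried out. (ii) The regular singular step --- that compatibility forces constant exponents, a constant connection matrix, and no logarithmic blocks --- is again the full strength of the result in the slope-zero case and is only named; in Case \textbf{2S} in particular the exponents live modulo $h_1\Z$ and the argument that $h_1/h_2\notin\Q$ excludes resonances needs to be written down. (iii) The descent of $T$ from a ramified extension back to $K$ matters genuinely in Case \textbf{2S}, where $K=\C(x)$ is not closed under ramification, and is not addressed. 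A minor further point: the characterization ``constant-coefficient $=$ slope $0$ with no nonconstant regular part'' is the conclusion you are trying to reach, not a definition you may invoke. In short, the proposal is a plausible program matching the strategy of the cited source, but it does not yet constitute a proof of the statement.
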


 %%%%%%%%%%%%%%%%%%%%%%%%%%%%%%%
 \subsection{Reducible Galois groups}
 
For a finite dimensional $C$-vector space $V$, a closed subgroup $\G$ of $\GL_V$ is \emph{irreducible} if $V$ is an irreducible representation of $\G$. If this is not the case, $\G$ is called \emph{reducible}.

The following lemma characterizes linear difference equations with a reducible Galois group. 

\begin{lemma}[Lemma 4.4 in \cite{ADH}]\label{lem: reducible1}
Let $K$ be a $\phi$-field with $C=K^\phi$ algebraically closed. Let $A \in \GL_n(K)$ and 
 let $\G\subset \GL_{n,C}$ be the Galois group of $\phi(Y)=AY$.
For an integer $r$ with $0<r<n$ the following statements are equivalent.
\begin{itemize}
\item There exists a $\G$-subrepresentation of $ C^n$
 of dimension $r$ over $C$.

\item There exists $T \in \GL_n(K)$ such that
 $$\phi(T)AT^{-1} =\begin{pmatrix}
B_1 &B_2 \\
0 & B_3
\end{pmatrix}$$ with $B_1 \in \GL_r(K)$.
\end{itemize}
In particular, $\G$ is reducible if and only if the above statements hold for some $r$, with $0<r<n$.  
\end{lemma}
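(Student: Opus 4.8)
The plan is to pass from the matrix statement to the language of difference modules and solution spaces, where both conditions become statements about an $r$-dimensional stable subspace. Fix a Picard--Vessiot extension $K_A$ for $\phi(Y)=AY$ with Picard--Vessiot ring $R_A$ and fundamental matrix $U\in\GL_n(R_A)$, so that $\tau(U)=U[\tau]_U$ for $\tau\in\G$. Recall $C=K^\phi=R_A^\phi$ and that the solution space $\mathrm{Sol}:=\{v\in R_A^n\mid\phi(v)=Av\}$ is the $C$-span of the columns of $U$; the assignment $v=Uc\mapsto c$ is a $C$-linear isomorphism $\mathrm{Sol}\xrightarrow{\sim}C^n$ that is equivariant for the $\G$-action $c\mapsto[\tau]_U c$, which is exactly the representation of $\G$ on $C^n$ appearing in the statement. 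Throughout I will freely use the standard fact that in a $\phi$-simple ring a nonzero element $f$ with $\phi(f)=uf$ for a unit $u$ is itself a unit (its principal ideal is a nonzero $\phi$-ideal).

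First I would prove that the second statement implies the first. After replacing $A$ by $\widetilde{A}=\phi(T)AT^{-1}$ and $U$ by $\widetilde{U}=TU$, which leaves the matrices $[\tau]_U$ unchanged because $\tau$ fixes $T\in\GL_n(K)$, I may assume $A$ itself is block upper triangular with $B_1\in\GL_r(K)$. Writing $\widetilde{U}$ in blocks, the lower $(n-r)\times n$ block $V$ satisfies $\phi(V)=B_3V$ because the $(2,1)$-block of $A$ vanishes. Since $\widetilde{U}\in\GL_n(R_A)$ the rows of $V$ are $R_A$-independent, so some $(n-r)\times(n-r)$ submatrix $V_0$ of $V$ has nonzero determinant; as $\phi(\det V_0)=\det(B_3)\det V_0$, $\phi$-simplicity forces $\det V_0$ to be a unit, so $V_0$ is a fundamental matrix for $\phi(Y)=B_3Y$ and its columns are $C$-linearly independent. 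Hence the columns of $V$ $C$-span exactly an $(n-r)$-dimensional space, so $W:=\{c\in C^n\mid Vc=0\}$ has dimension $r$; and $W$ is $\G$-stable because the corresponding subspace $\widetilde{U}W\subset\mathrm{Sol}$ of solutions supported in the top block is visibly preserved by $\G$. This $W$ is the desired subrepresentation.

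Conversely, to deduce the second statement from the first, I would descend a given $\G$-stable $W\subset C^n$ of dimension $r$ to a $K$-rational sub-difference-module. Let $M_A=(K^n,\Phi)$ be the difference module attached to $A$, so that $\Phi(y)=A^{-1}\phi(y)$ and the columns of $U$ form a $\Phi$-invariant $R_A$-basis of $M_A\otimes_K R_A\cong\mathrm{Sol}\otimes_C R_A=R_A^n$. The $R_A$-submodule $P:=U\cdot(W\otimes_C R_A)$ is then both $\Phi$-stable (because $W$ lies in the constants $C^n$) and stable under the $\G$-action on $R_A^n$ (because $[\tau]_U$ preserves $W$). By faithfully flat Galois descent along $K\to R_A$ --- concretely using $R_A^\G=K$ and the torsor isomorphism $R_A\otimes_K R_A\cong R_A\otimes_C C[\G]$ --- the $\G$-stable submodule $P$ descends to a $K$-subspace $N:=P\cap M_A$ with $N\otimes_K R_A=P$, whence $\dim_K N=\operatorname{rank}_{R_A}P=r$. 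Since $P$ is $\Phi$-stable, $N$ is a sub-difference-module of $M_A$, so $\Phi$ restricts to a bijection of $N$. Choosing a $K$-basis of $N$, extending it to a basis of $M_A$, and letting $T^{-1}$ be the resulting change-of-basis matrix puts the matrix of $\Phi$ into block upper triangular form; the top-left block $B_1$ is invertible precisely because $\Phi|_N$ is bijective, which is exactly the required $\widetilde{A}=\phi(T)AT^{-1}$.

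The main obstacle is the descent step in the converse direction: turning the $\G$-stable, $\Phi$-stable subspace $P$ over $R_A$ into a genuinely $K$-rational submodule $N$ of the same dimension. This is where the fundamental theorem of the Galois theory ($R_A^\G=K$, equivalently Proposition \ref{prop:Galois correspondence}) and faithful flatness of the Picard--Vessiot ring are essential; the remaining ingredients are the elementary identification of block-triangular gauge forms with adapted bases of sub-difference-modules and the $\phi$-simplicity argument used in the forward direction. Care must also be taken to track dimensions so that an $r$-dimensional subrepresentation matches an $r\times r$ top-left block rather than an $(n-r)\times(n-r)$ one.
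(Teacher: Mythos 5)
Your proof is correct. Note that the paper offers no proof of this statement at all: it is quoted verbatim from \cite{ADH} (Lemma 4.4), and the argument there is the same standard dictionary you use, between $\G$-subrepresentations of the solution space, sub-difference-modules of $(K^n,\Phi)$ with $\Phi=A^{-1}\phi$, and block upper triangular gauge forms, with the converse direction handled by descent along the torsor $R_A\otimes_K R_A\cong R_A\otimes_C C[\G]$. So your write-up is essentially a self-contained rendition of the intended proof. Two small points are worth tightening. First, since a Picard--Vessiot ring need not be a domain, the existence of an $(n-r)\times(n-r)$ submatrix $V_0$ of $V$ with $\det V_0\neq 0$ should be justified by Laplace expansion of the unit $\det U$ along the bottom $n-r$ rows, rather than by ``$R_A$-independence of the rows''; the rest of that step ($\phi$-simplicity forcing $\det V_0$ to be a unit) is exactly right. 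Second, for the descent to apply, the $\G$-stability of $P=U\cdot(W\otimes_C R_A)$ must be the functorial (comodule) condition, i.e.\ stability under $\G(B)$ for every $C$-algebra $B$; your computation does establish this, since it only uses that $[\tau]_U$ preserves $W\otimes_C B$, but it is worth saying explicitly. Finally, the invertibility of $B_1$ at the end is immediate from the fact that $\phi(T)AT^{-1}\in\GL_n(K)$ is block triangular, so no appeal to bijectivity of $\Phi|_N$ (which would require $\phi$ to be surjective) is needed.
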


%%%%%%%%%%%%%%%%%%%%%%%%% 
\subsection{Condition $\cH$ and consequences}
Similarly to \cite{ADH}, we need to consider the following setting. 
We say that a $\sphi$-field  $K$  satisfies \emph{Condition $\cH$} if the following properties hold. 

\begin{itemize}
\item $\s\colon K\to K$ is an automorphism.	
\item $C=K^\phi$ is algebraically closed. 
\item  For every positive integer $r$, $K$ has no finite nontrivial $\f^r$-field  extension. 
\end{itemize}

\begin{rmk}\label{rmk:iteratetheoperatorConditionH}
Note that if $K^{\phi}=C$ is an algebraically closed field then by \cite[Lemma 4.8]{ADH}, $K^{\phi^s}=C$ 
for all positive integers $s$. Thus, any $\sphi$-field satisfying Condition $\cH$, also satisfies Condition $\cH$ as 
a $(\phi^r,\sigma^s)$-field, for all positive integers $r$ and $s$. 
\end{rmk}

\begin{lemma}[Lemma 4.9 in \cite{ADH}] \label{lemma: base fields satisfy H}
	Let $K$ be one of the $\fs$-fields defined in Cases \textbf{2S}, \textbf{2Q}, and \textbf{2M}. Then $(K,\phi)$ satisfies Condition $\cH$.
\end{lemma}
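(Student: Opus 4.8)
The plan is to verify the three defining clauses of Condition $\cH$ for each of the families \textbf{2S}, \textbf{2Q}, \textbf{2M}; the first two are immediate and the third carries all the content. That $\s\colon K\to K$ is an automorphism is clear in each case: in \textbf{2S} the inverse is $x\mapsto x-h_2$; in \textbf{2Q} it is the scaling $x^{1/j}\mapsto q_2^{-1/j}x^{1/j}$; and in \textbf{2M} the map $x\mapsto x^{p_2}$ is an injective field endomorphism which is onto because $x^{1/j}=\s(x^{1/(jp_2)})$. Since $\C$ is algebraically closed, the second clause reduces to $K^\phi=\C$, and in each case a nonconstant (ramified) rational function $f$ cannot satisfy $f(x+h_1)=f(x)$, $f(q_1x)=f(x)$, or $f(x^{p_1})=f(x)$, because $\phi$ moves the support of $f$ (using $h_1\neq 0$, that $q_1$ is not a root of unity, and $p_1\geq 2$). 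By Remark~\ref{rmk:iteratetheoperatorConditionH} this also gives $K^{\phi^s}=\C$ for all $s$.

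The heart of the matter is the third clause: for every $r\geq 1$, any finite field extension $L\mid K$ to which $\phi^r$ extends is trivial. I would argue by contradiction through the ramification of algebraic functions. Suppose $L\supsetneq K$ and pick a primitive element $\theta$, so $L=K(\theta)$ and $\theta$ is algebraic over $\C(x)$. As $K/\C(x)$ is the Kummer tower $\bigcup_j\C(x^{1/j})$, I may fix a level $t=x^{1/N}$ with $N$ large enough that $\theta$ is algebraic over $\C(t)$, that $\phi^r(\C(t))\subseteq\C(t)$, and that $\phi^r\theta$, written as a $K$-polynomial in $\theta$, already has coefficients in $\C(t)$, so that $\C(t)(\phi^r\theta)\subseteq\C(t)(\theta)$. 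Let $\Sigma\subset\C\setminus\{0\}$ be the branch locus of $\theta$ over $\C(t)$, i.e. the zero set of $\mathrm{disc}(P)$ in $\C\setminus\{0\}$, where $P$ is the minimal polynomial of $\theta$. The key point is that $\theta\notin K$ forces $\Sigma\neq\varnothing$: an algebraic function over $\C(t)$ ramified only over $\{0,\infty\}$ lies in some $\C(t^{1/e})\subset K$, since the only connected covers of the line branched at two points are Kummer covers (in \textbf{2S}, where $K=\C(x)$ and $t=x$, one uses instead that $\A^1_{\C}$ is simply connected), which would contradict $\theta\notin K$.

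Now I would exploit $\phi^r$ geometrically. On the coordinate $t$, the automorphism $\phi^r$ induces the map $\psi$ given by $t\mapsto t+rh_1$ in \textbf{2S}, by $t\mapsto q_1^{r/N}t$ in \textbf{2Q}, and by $t\mapsto t^{p_1^r}$ in \textbf{2M}; applying $\phi^r$ to $P$ shows that the branch locus of $\C(t)(\phi^r\theta)$ over $\C(t)$ is exactly $\psi^{-1}(\Sigma)$. Since $\C(t)(\phi^r\theta)\subseteq\C(t)(\theta)$ and a place unramified in the larger field is unramified in the smaller, we get $\psi^{-1}(\Sigma)\subseteq\Sigma$. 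This is impossible for a finite nonempty $\Sigma$: translation by $rh_1\neq 0$ and multiplication by $q_1^{r/N}$ (which is not a root of unity, as $q_1$ is not) admit no finite invariant set, while the power map gives $|\psi^{-1}(\Sigma)|=p_1^r|\Sigma|>|\Sigma|$ because $0\notin\Sigma$. The resulting contradiction yields $L=K$, establishing Condition $\cH$ uniformly.

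I expect the main obstacle to be the geometric setup of the second paragraph over the pro-algebraic base $K=\C(x^{1/*})$: one must descend correctly to a finite Kummer level, use that $K$ is precisely the maximal algebraic extension of $\C(x)$ unramified away from $0$ and $\infty$ (so that $\theta\notin K$ genuinely produces a branch point in $\C\setminus\{0\}$), and verify that $\phi^r$ transforms the branch locus by the inverse base dynamics $\psi^{-1}$. Once this framework is in place, the dynamical contradiction is immediate and identical in spirit across all three cases, the only difference being whether $\psi$ fixes a finite set (it does not, for a translation or a non-torsion scaling) or enlarges it (as the power map does).
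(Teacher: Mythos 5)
The paper does not actually prove this lemma; it is imported verbatim as \cite[Lemma 4.9]{ADH}, so there is no in-paper argument to compare against. Your verification of the first two clauses of Condition $\cH$ is fine, and your ramification-theoretic strategy for the third clause is sound in outline: in Cases \textbf{2S} and \textbf{2Q} the map $\psi$ induced on the coordinate $t$ is an automorphism of $\C(t)$, so $\C(t)(\phi^r\theta)=\phi^r(\C(t)(\theta))$, the branch locus genuinely transforms into $\psi^{-1}(\Sigma)$, and the inclusion $\psi^{-1}(\Sigma)\subseteq\Sigma$ together with the absence of finite sets invariant under a nontrivial translation or a non-torsion scaling finishes the argument (provided you work with the intrinsic branch locus rather than the zero set of $\mathrm{disc}(P)$, which depends on the primitive element and may be strictly larger).

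The genuine gap is in Case \textbf{2M}, precisely at the step you flag but do not carry out: the claim that the branch locus of $\C(t)(\phi^r\theta)$ over $\C(t)$ is the \emph{full} preimage $\psi^{-1}(\Sigma)$ under $\psi(t)=t^{p_1^r}$. Here $\phi^r(\C(t))=\C(t^{p_1^r})$ is a \emph{proper} subfield of $\C(t)$, so $\phi^r(P)$ is the minimal polynomial of $\phi^r\theta$ over $\C(t^{p_1^r})$, not over $\C(t)$, and it may factor over $\C(t)$; ``applying $\phi^r$ to $P$'' does not compute the branch locus of the compositum $\C(t)\cdot\phi^r(\C(t)(\theta))$ over $\C(t)$. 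The cheap argument (a place unramified in $\C(t)(\theta)$ is unramified in the subfield $\C(t)(\phi^r\theta)$, and ramification downstairs forces ramification at \emph{some} preimage upstairs) only yields $\Sigma\subseteq\psi(\Sigma)$, hence that $\Sigma$ is a finite set permuted by $z\mapsto z^{p_1^r}$, i.e.\ consists of roots of unity --- no contradiction, as the example $\C(t)(\sqrt{t-1})$ with $\Sigma=\{1\}$ shows this weaker conclusion is genuinely attainable. To get the inclusion you need, namely that \emph{every} $a$ with $a^{p_1^r}\in\Sigma$ is a branch point of $\C(t)(\phi^r\theta)/\C(t)$, one must use that the places of a compositum surject onto compatible pairs of places of the two factors, together with Abhyankar's lemma (the Kummer cover $t\mapsto t^{p_1^r}$ being \'etale over $\C^*$), to conclude that the ramification index of $\C(t)(\phi^r\theta)$ over $t=a$ equals that of $\phi^r(\C(t)(\theta))$ over $s=a^{p_1^r}$. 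With that supplement one gets $\psi^{-1}(\Sigma)\subseteq\Sigma$ and the cardinality count $p_1^r|\Sigma|\leq|\Sigma|$ forces $\Sigma=\varnothing$, completing your argument; without it, the ``immediate dynamical contradiction'' does not go through in the Mahler case, which is exactly the case carrying the most content.
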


The following lemma is a straightforward generalization of \cite[Lemma 4.5]{DHRqdiffhypergo}.

\begin{lemma}  \label{lem:relativalgclosedbasefieldPPVfield}
Let $K$ be a $\sphi$-field satisfying Condition $\cH$. 
Let  $A \in \GL_n(K)$ and let $L_A$ be a $\s$-Picard-Vessiot extension for $\phi(Y)=AY$  over $K$. 
If $L_A$ is a field, then $K$ is relatively algebraically closed in $L_A$.
\end{lemma}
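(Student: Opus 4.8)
The plan is to reduce this statement about the $\sigma$-Picard--Vessiot field $L_A$ to a statement about \emph{ordinary} Picard--Vessiot fields, where Condition $\cH$ can be exploited through Proposition~\ref{cor:Galoisgroupconnected}. Fix a fundamental matrix $U\in\GL_n(L_A)$ with $\phi(U)=AU$, so that $L_A=K\langle U\rangle_\sigma$. Since $L_A$ is a field, it equals the increasing union $\bigcup_{m\geq 0}K_m$, where $K_m:=K\big(U,\sigma(U),\dots,\sigma^m(U)\big)$. The key observation is that each $K_m$ is an ordinary Picard--Vessiot extension of $K$. Indeed, because $\phi$ and $\sigma$ commute and $\sigma(K)=K$, each $\sigma^j(U)$ satisfies $\phi(\sigma^j(U))=\sigma^j(A)\,\sigma^j(U)$ with $\sigma^j(A)\in\GL_n(K)$; hence the block-diagonal matrix $V_m=\mathrm{diag}(U,\sigma(U),\dots,\sigma^m(U))$ is a fundamental matrix for the system $\phi(Y)=B_mY$, where $B_m=\mathrm{diag}(A,\sigma(A),\dots,\sigma^m(A))\in\GL_{(m+1)n}(K)$, and $K_m=K(V_m)$. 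Moreover $K_m\subseteq L_A$ forces $K_m^\phi\subseteq L_A^\phi=C$, while $C\subseteq K\subseteq K_m$, so $K_m^\phi=C=K^\phi$; thus $K_m$ is a Picard--Vessiot extension for $\phi(Y)=B_mY$.

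With this filtration, the lemma follows from the following claim: \emph{if $K$ satisfies Condition $\cH$, then $K$ is relatively algebraically closed in every ordinary Picard--Vessiot extension $K_B\,|\,K$ that is a field.} Granting it, every $K_m$ has this property, so any element of $L_A=\bigcup_m K_m$ that is algebraic over $K$ already lies in some $K_m$ and therefore in $K$, which is exactly what we want.

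To prove the claim I would invoke the first part of Proposition~\ref{cor:Galoisgroupconnected}: $K$ is relatively algebraically closed in $K_B$ if and only if $\G:=\Gal(K_B\,|\,K)$ is connected, so it suffices to prove connectedness. Suppose $\G^\circ\subsetneq\G$, where $\G^\circ$ denotes the identity component, a normal closed subgroup of finite index. By the second part of Proposition~\ref{cor:Galoisgroupconnected}, the fixed field $K_B^{\G^\circ}$ is again a Picard--Vessiot extension of $K$; it is $\phi$-stable because the Galois action commutes with $\phi$, and its Galois group over $K$ is the finite group $\G/\G^\circ$, so $K_B^{\G^\circ}$ is a finite $\phi$-field extension of $K$. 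By Condition $\cH$ (applied with $r=1$) this extension is trivial, i.e.\ $K_B^{\G^\circ}=K$, and then Proposition~\ref{prop:Galois correspondence} yields $\G^\circ=\G$, contradicting $\G^\circ\subsetneq\G$. Hence $\G$ is connected.

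The main obstacle is the last paragraph, and specifically the translation of the finite index of $\G^\circ$ in $\G$ into the \emph{finiteness as a field extension} of $K_B^{\G^\circ}$ over $K$, so that Condition $\cH$ can be applied: one must know that the fixed field of the identity component is a Picard--Vessiot extension with finite Galois group $\G/\G^\circ$ and hence of transcendence degree $0$ over $K$. This is where Proposition~\ref{cor:Galoisgroupconnected} together with the standard dimension count of the Galois correspondence ($\trdeg(K_B|K)=\dim\G=\dim\G^\circ$) are essential; the filtration step and the constant computation $K_m^\phi=C$ are routine.
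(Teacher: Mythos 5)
Your argument is correct, but it takes a noticeably longer route than the paper for the key step. Both proofs rest on the same filtration $L_A=\bigcup_m K_m$ with $K_m=K(U,\sigma(U),\dots,\sigma^m(U))$, and both ultimately kill an intermediate finite $\phi$-field extension of $K$ using Condition $\cH$. The difference is how that finite extension is produced. The paper observes directly that the relative algebraic closure $M$ of $K$ in $L_A$ meets each $K_m$ in the relative algebraic closure of $K$ in $K_m$, which is automatically a \emph{finite} extension of $K$ because $K_m$ is a finitely generated field extension of $K$; it is moreover $\phi$-stable (both $M$ and $K_m$ are), so Condition $\cH$ forces $M\cap K_m=K$ and hence $M=K$. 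No Galois theory is needed. You instead realize each $K_m$ as an ordinary Picard--Vessiot extension for a block-diagonal system, prove that its Galois group is connected by applying Condition $\cH$ to the fixed field of the identity component (which requires the extra inputs that $K_B^{\mathcal{G}^\circ}$ is a Picard--Vessiot extension with Galois group $\mathcal{G}/\mathcal{G}^\circ$ and that a Picard--Vessiot extension with finite Galois group has transcendence degree $0$, hence is finite), and then invoke the first part of Proposition~\ref{cor:Galoisgroupconnected} to conclude relative algebraic closedness. All of these steps are legitimate, and your detour has the mild virtue of making the connectedness of $\Gal(K_m|K)$ explicit (a fact the paper also needs elsewhere, cf.\ Lemma~\ref{lemma:relatalgclosedbasefieldsigmaintegralPPVgroup}), but for the lemma at hand the Galois machinery is dispensable: the elementary observation that the algebraic closure of $K$ inside a finitely generated extension is finite already does the job.
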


\begin{proof}
	Let $M$ denote the relative algebraic closure of $K$ in $L_A$. We have to show that $M=K$. 
	Given a fundamental  matrix $U\in\GL_n(L_A)$ for $\f(Y)=AY$ and an integer $i\geq 0$, 
	the intersection $M\cap K(U,\ldots,\s^i(U))$ 
	 is a finite $\f$-field extension of $K$ and therefore trivial. Thus $M=K$.
\end{proof}

We are now interested in iterating difference systems. 
 Given $A \in \GL_n( K)$ and a positive integer $r$, we set $A_{[r]}:=\phi^{r-1}(A) \cdots \phi(A) A$. 
 Note that $\phi(Y)=AY$ implies that $\phi^{r}(Y)=A_{[r]}Y$.
The following proposition shows that, considering some iterates of the operators $\s$ and $\phi$ if necessary, 
one can always reduce the situation to the case where  the  $\s$-Picard-Vessiot extension $L_A$ is a field. 

 \begin{prop}[Proposition 4.6 in \cite{DHRqdiffhypergo}]\label{propo:existenceparamfieldsolutionqdiff}
Let $K$ be a $\sphi$-field such that $C=K^\phi$ is algebraically closed. Let $A \in \GL_n( K)$, $F$
a $\phi\s$-field extension of $K$ such that $F^\phi=K^\phi$, and let 
$u_1,\dots,u_n \in F^*$ be such that $(u_1,\dots,u_n)^{\top}$ is a solution to $\phi(Y)=AY$. 
Then the following properties hold. 

\begin{itemize}
\item[{\rm (1)}] 
There exist  some positive integers $r$ and $s$ and  a $\s^s$-Picard-Vessiot  
extension $L_A$ for  the system $\phi^r(Y)=A_{[r]}Y$  over $(K,\phi^r)$ such that $L_A$ is a field and  
$K\{u_1,\ldots,u_n\}_{\s^s,\f^r}$ embeds into $L_A$. 

\item[{\rm (2)}]  If $K$ satisfies Condition $\cH$, then the  Galois group of $\phi^{r}(Y)=A_{[r]}Y$ over $(K,\phi^r)$ 
coincides with the connected component of the Galois group of the system ${\phi(Y)=AY}$ over $(K,\phi)$.  
\end{itemize}
  \end{prop}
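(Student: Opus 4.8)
The plan is to prove (1) by realizing the single solution $u$ inside a $\s$-Picard--Vessiot pseudo-field for $\phi(Y)=AY$ over $K$, and then to pass to suitable iterates $\phi^{r},\s^{s}$ in order to split off a genuine field from this pseudo-field; part (2) will then follow by combining the connectedness criterion of Proposition~\ref{cor:Galoisgroupconnected} with the component structure of the ordinary Picard--Vessiot ring of $\phi(Y)=AY$. To realize $u$, I would first build a $\s$-Picard--Vessiot ring $S$ for $\phi(Y)=AY$ \emph{over the base $F$} (which is legitimate by \cite{OvWib} since $F^{\phi}=C$ is algebraically closed), with fundamental matrix $U\in\GL_n(S)$ and $S^{\phi}=C$; let $N=\Quot(S)$, a $\phi$-pseudo $\s$-field with $N^{\phi}=C$. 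Because $u\in F^{n}\subset N^{n}$ is a solution and $U$ is invertible, the vector $U^{-1}u$ is $\phi$-invariant, hence lies in $(N^{\phi})^{n}=C^{n}$; writing $u=Uc$ with $c\in C^{n}$ shows that $u$ already lies in $M:=K\langle U\rangle_{\s}\subset N$. Thus $M$ is a $\s$-Picard--Vessiot extension of $K$ for $\phi(Y)=AY$ (a $\phi$-pseudo field with $M^{\phi}=C$) containing $K\{u\}_{\s,\phi}$.

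By Definition~\ref{def:pseudofield} one has $M=e_1M\oplus\cdots\oplus e_dM$ with $\phi$ permuting the idempotents cyclically. Since $\s$ commutes with $\phi$ it permutes the finite set $\{e_1,\dots,e_d\}$, so I would choose $s\geq 1$ with $\s^{s}(e_i)=e_i$ for all $i$, and $r$ a multiple of $d$; then $\phi^{r}$ and $\s^{s}$ fix each $e_i$. Setting $L_A:=K\langle e_1U\rangle_{\s^{s}}\subset e_1M$, this is a subfield of the field $e_1M$, it carries the fundamental matrix $e_1U$ for $\phi^{r}(Y)=A_{[r]}Y$, and satisfies $L_A^{\phi^{r}}=C$ (using $(e_1M)^{\phi^{d}}=M^{\phi}=C$ and Remark~\ref{rmk:iteratetheoperatorConditionH}); hence $L_A$ is a $\s^{s}$-Picard--Vessiot field extension for $\phi^{r}(Y)=A_{[r]}Y$ over $(K,\phi^{r})$. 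It remains to embed $K\{u\}_{\s^{s},\phi^{r}}$ into $L_A$. I would use the projection $\mu:=\mu_{e_1}\colon M\to e_1M$, a $\phi^{r}\s^{s}$-homomorphism, and show that its restriction to the $\phi$-stable domain $D:=K\{u\}_{\s,\phi}\subset F$ is injective: otherwise, picking $0\neq a\in\ker(\mu|_D)$, each $\ker(\mu_{e_i}|_D)$ would contain the nonzero element $\phi^{i-1}(a)$ (as $e_i\phi^{i-1}(a)=\phi^{i-1}(e_1a)=0$), so all $d$ of these kernels would be nonzero ideals of the domain $D$ with zero intersection (since $D\hookrightarrow\bigoplus_i e_iM$ is the inclusion), which is impossible. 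A fortiori $\mu$ is injective on $D_0:=K\{u\}_{\s^{s},\phi^{r}}\subset D$, whose image $\mu(D_0)=K\{e_1u\}_{\s^{s},\phi^{r}}$ lies in $L_A$; this yields the desired embedding and proves (1). The injectivity of $\mu|_D$ is the one genuinely delicate point here.

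For part (2), I would enlarge $r$ if necessary so that, in addition, $r$ is a multiple of $t:=[\G:\G^{\circ}]$, where $\G=\Gal(K_A|K)$ is the Galois group of $\phi(Y)=AY$ over $(K,\phi)$; part (1) is insensitive to replacing $r$ by a multiple. Under Condition $\cH$, Lemma~\ref{lem:relativalgclosedbasefieldPPVfield} shows that $K$ is relatively algebraically closed in $L_A$, hence in the ordinary Picard--Vessiot extension $K_A=K(e_1U)$ for $\phi^{r}(Y)=A_{[r]}Y$ (Proposition~\ref{propo:schematicalgebraicgaloisgroupcomparaison}), so by Proposition~\ref{cor:Galoisgroupconnected} its Galois group $\Gal_r$ is connected. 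To identify $\Gal_r$ with $\G^{\circ}$, I would use the classical decomposition (\cite{VdPdifference}) of the ordinary Picard--Vessiot ring $R_A$ of $\phi(Y)=AY$ into $t$ domains cyclically permuted by $\phi$: for $r$ a multiple of $t$ each component is $\phi^{r}$-stable and is a Picard--Vessiot ring for $\phi^{r}(Y)=A_{[r]}Y$ whose Galois group is precisely $\G^{\circ}$ (the stabilizer in $\G$ of a component). By uniqueness of Picard--Vessiot extensions over the algebraically closed field $C$ (Proposition~\ref{prop:PVRingPVextconstantalgclos}), this component is isomorphic to $K[e_1U,\det(e_1U)^{-1}]$, whence $\Gal_r=\G^{\circ}$.

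I expect the main obstacle to be the bookkeeping around the idempotents in parts (1) and (2) taken together: one must simultaneously force $L_A$ to be a genuine field, keep it a $\s^{s}$-Picard--Vessiot extension for \emph{precisely} the iterated companion system $\phi^{r}(Y)=A_{[r]}Y$, and reconcile the number $d$ of $\phi$-components of the pseudo-field $M$ with the index $t=[\G:\G^{\circ}]$ governing the conclusion of (2). The freedom to pass to a common multiple of $d$ and $t$ (and to enlarge $s$ to fix the idempotents) is what makes this reconciliation possible, while the primes-in-a-domain injectivity argument for $\mu|_D$ is the technical crux that makes the embedding in (1) go through.
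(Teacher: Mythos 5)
Your proposal is essentially a correct reconstruction of the argument, but it is worth noting that the paper itself gives no proof here: its entire ``proof'' is the remark that \cite{DHRqdiffhypergo} treats Case \textbf{2Q} and that ``the proof works the same at this level of generality.'' So you have supplied the content that the paper delegates to the reference, and the route you take --- realize $u$ inside a $\s$-Picard--Vessiot pseudo-field over $K$ by writing $u=Uc$ with $c\in C^n$ inside $\Quot(S)$, split the pseudo-field along its idempotents, pass to iterates $\f^r,\s^s$ fixing $e_1$, and embed $K\{u\}_{\s^s,\f^r}$ via $x\mapsto e_1x$ --- is the standard one. Your injectivity argument for $\mu_{e_1}$ on the domain $D=K\{u\}_{\s,\f}\subset F$ (all $d$ kernels would be nonzero ideals of a domain with zero intersection) is correct and is exactly the delicate point.

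Two places deserve more explicit justification. First, you assert without comment that $M=K\langle U\rangle_\s$ is a $\s$-Picard--Vessiot extension of $K$; with the paper's Definition~\ref{defi:sPVring} this amounts to checking that $M$ is a $\f$-pseudo $\s$-field with $M^\f=C$, which does hold since $M$ is by definition the smallest $\f$-pseudo field of $N$ containing $K$ and the entries of $U$ and $M^\f\subset N^\f=C$ (the $\f$-simplicity of $K\{U,\det(U)^{-1}\}_\s$ is then the content of \cite[Proposition~2.21]{OvWib}, implicitly invoked in Definition~\ref{defi:sPVring}); you should say this. Second, in part (2) the classical component decomposition of $R_A$ gives that each component is a Picard--Vessiot ring for $\f^{t}(Y)=A_{[t]}Y$ with group $\G^{\circ}$, where $t=[\G:\G^{\circ}]$, but you then use it for $\f^{r}$ with $r$ a proper multiple of $t$. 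That step is not the literal statement of the classical theorem: you need that a domain Picard--Vessiot ring with connected group remains a Picard--Vessiot ring, with unchanged group, after further iteration. Under Condition $\cH$ this follows from $K^{\f^r}=C$ (Remark~\ref{rmk:iteratetheoperatorConditionH}), the fact that a finitely generated $K$-algebra of the form $K[V,\det(V)^{-1}]$ inside a field with no new $\f^r$-constants is automatically a Picard--Vessiot ring, and a dimension-plus-connectedness comparison to conclude the group is still $\G^{\circ}$; without Condition $\cH$ it can fail, which is precisely why (2) carries that hypothesis. Neither point is a fatal gap, but both should be spelled out.
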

  
 \begin{proof}
 The paper \cite{DHRqdiffhypergo} is concerned with Case $\textbf{2Q}$ but the proof works 
 the same at this level of generality.
 \end{proof}

\begin{rmk}\label{rem:groupofiteratesifconnected}
As a straightforward consequence of  (2) in Proposition \ref{propo:existenceparamfieldsolutionqdiff}, 
we obtain that if  the Galois group $\G$ of the system ${\phi(Y)=AY}$ over $(K,\phi)$ is connected, 
then, for every positive integer $r$, the Galois group of 
$\phi^r (Y)=A_{[r]} Y$ over $(K,\phi^{r})$ coincides with $\G$. 
\end{rmk}

A $\s$-algebraic group $G$ over a $\s$-field $C$ is \emph{$\s$-integral} if its coordinate ring $C\{G\}$ is a \emph{$\s$-domain}, i.e. $C\{G\}$ is an integral domain and $\s\colon C\{G\}\to C\{G\}$ is injective.

\begin{lemma}\label{lemma:relatalgclosedbasefieldsigmaintegralPPVgroup}
	Let $K$ be a $\sphi$-field satisfying Condition $\cH$
	and let $L_A |K$ be a $\s$-Picard-Vessiot extension for $\phi(Y)=AY$. 
	If $L_A$ is a field, then $\sGal(L_A|K)$ is $\sigma$-integral and the Galois group of $\f(Y)=AY$ is connected.
\end{lemma}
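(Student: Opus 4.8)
The plan is to derive both conclusions from a single input provided by Lemma~\ref{lem:relativalgclosedbasefieldPPVfield}, namely that $K$ is relatively algebraically closed in $L_A$, combined with the explicit description $C\{G\}=(S_A\otimes_K S_A)^{\f}$ of the coordinate ring of $G=\sGal(L_A|K)$. The two assertions will then be essentially independent consequences of this fact.

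For the connectedness of the Galois group, I would first invoke Proposition~\ref{propo:schematicalgebraicgaloisgroupcomparaison}: since $L_A$ is a field, $K_A=K(U)\subseteq L_A$ is a Picard-Vessiot extension for $\f(Y)=AY$, and being a subring of the field $L_A$ it is itself a field. As $K$ is relatively algebraically closed in $L_A$ and $K\subseteq K_A\subseteq L_A$, the field $K$ is relatively algebraically closed in $K_A$ as well: any element of $K_A$ algebraic over $K$ lies in $L_A$ and is algebraic over $K$, hence lies in $K$. Proposition~\ref{cor:Galoisgroupconnected} then applies (its hypothesis that $K_A$ is a field being met) and yields that $\Gal(K_A|K)$, that is, the Galois group of $\f(Y)=AY$, is connected.

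For the $\s$-integrality of $G$, I would show that $C\{G\}=(S_A\otimes_K S_A)^{\f}$ is a $\s$-domain by embedding everything into $L_A\otimes_K L_A$. Since $\mathrm{char}\,K=0$, the extension $L_A|K$ is separable, so $K$ being relatively algebraically closed in $L_A$ makes $L_A|K$ a regular extension; consequently $L_A\otimes_K L_A$ is an integral domain. Because $L_A=\Quot(S_A)$ is a field, $S_A$ is a domain sitting inside $L_A$, and flatness over the field $K$ provides a $(\s\otimes\s)$-equivariant inclusion $S_A\otimes_K S_A\hookrightarrow L_A\otimes_K L_A$. Hence $S_A\otimes_K S_A$, and with it its $\f$-invariant subring $C\{G\}$, is an integral domain. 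It then remains to check that $\s$ is injective on $C\{G\}$, for which it suffices that $\s\otimes\s$ be injective on $L_A\otimes_K L_A$. Here $\s\colon L_A\to L_A$ is injective (a ring endomorphism of a field) and restricts to an automorphism of $K$ by Condition~$\cH$; writing $L'=\s(L_A)$, I would factor $\s\otimes\s$ as the isomorphism $L_A\otimes_K L_A\xrightarrow{\cong}L'\otimes_K L'$ (with inverse $\s^{-1}\otimes\s^{-1}$, well defined because $\s^{-1}(K)=K$) followed by the inclusion $L'\otimes_K L'\hookrightarrow L_A\otimes_K L_A$, which is injective by flatness over $K$. Restricting the injective map $\s\otimes\s$ to the subring $C\{G\}$ gives the required injectivity of $\s$ there, completing the proof that $G$ is $\s$-integral.

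I expect the delicate point to be the injectivity of $\s$ on $C\{G\}$. Unlike the domain property, it is not formal: it genuinely uses both that $\s$ is an \emph{automorphism} of the base $K$, so that $\s\otimes\s$ is compatible with the tensor relation over $K$ and the factorization through $L'\otimes_K L'$ makes sense, and that $L_A$ is a field, so that $\s$ is injective on it. The other ingredient needing care is the regularity of $L_A|K$, which underlies the domain property of $L_A\otimes_K L_A$ and must be extracted from Condition~$\cH$ through Lemma~\ref{lem:relativalgclosedbasefieldPPVfield}.
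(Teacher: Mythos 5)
Your proof is correct, and while its skeleton (regularity of $L_A|K$ from Lemma~\ref{lem:relativalgclosedbasefieldPPVfield}, hence $L_A\otimes_K L_A$ a domain, hence $S_A\otimes_K S_A$ and its $\f$-invariants a domain) coincides with the paper's, you diverge from it at two points, both legitimately. First, for the injectivity of $\s$ on $L_A\otimes_K L_A$ the paper simply cites \cite[Proposition 1.2]{TomasicWibmer:Babbitt} (using that $K$ is inversive), whereas you give a direct, self-contained argument: factoring $\s\otimes\s$ as the semilinear isomorphism onto $L'\otimes_K L'$ with $L'=\s(L_A)$ (well defined precisely because $\s(K)=K$) followed by the flat inclusion $L'\otimes_K L'\hookrightarrow L_A\otimes_K L_A$. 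This is a clean elementary substitute for the cited result and correctly isolates where Condition~$\cH$ enters. Second, for connectedness the paper deduces that $C[\G]$ is a domain from the inclusion $C[\G]\subset C\{G\}$ established in the proof of Proposition~\ref{propo:schematicalgebraicgaloisgroupcomparaison} and invokes the characterization of connectedness via the coordinate ring; you instead pass through the classical Galois correspondence, noting that $K$ is relatively algebraically closed in $K_A\subset L_A$ and applying the first item of Proposition~\ref{cor:Galoisgroupconnected}. Your route makes the two conclusions logically independent (connectedness does not need the $\s$-integrality), at the cost of invoking the classical Picard--Vessiot machinery rather than reading everything off the coordinate rings; the paper's route is slightly more economical once $C\{G\}$ is known to be a domain. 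No gaps.
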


\begin{proof}
	Since $K$ is an inversive $\s$-field, i.e. $\s\colon K\to K$ is surjective, by \cite[Proposition 1.2  (iii)]{TomasicWibmer:Babbitt}, the $K$-$\s$-algebra $L_A$ satisfies all the equivalent conditions of \cite[Proposition 1.2]{TomasicWibmer:Babbitt}. In particular, point (vii) of that proposition implies that
	${\s\colon L_A\otimes_K S'\to L_A\otimes _K S'}$ is injective for any $K$-$\s$-algebra $S'$ with $\s\colon S'\to S'$ injective. Thus the map $\s\colon L_A\otimes_K L_A\to L_A\otimes_K L_A$ is injective. Since $K$ is relatively algebraically closed in $L_A$ by Lemma \ref{lem:relativalgclosedbasefieldPPVfield}, we see that $L_A$ is a regular field extension of $K$. So by \cite[Chapter V, \S 17, No. 3, Proposition 2]{Bourbaki}, $L_A\otimes_K L_A$ is an integral domain.
	
	Let $S_A\subset L_A$ denote the corresponding $\s$-Picard-Vessiot ring. Because $L_A\otimes_K L_A$ is a $\s$-domain we see that also $S_A\otimes_L S_A\subset L_A\otimes_K L_A$ is a $\s$-domain. By (\ref{eq:torsor})
	we have $S_A\otimes_K S_A=S_A\otimes_C C\{G\}$, where $G=\sGal(L_A|K)$. Therefore also $C\{G\}$ is a $\s$-domain and so $G$ is $\s$-integral. Because the coordinate ring $C[\G]$ of the Galois group $\G$ of $\f(Y)=AY$ is contained in $C\{G\}$ (cf. the proof of Proposition \ref{propo:schematicalgebraicgaloisgroupcomparaison}), it follows that $C[\G]$ is an integral domain. Therefore $\G$ is connected (cf. \cite[Summary 1.36]{Milne:algebraicGroups}).	
\end{proof} 

%%%%%%%%%%%%%%%%%%%%%%%%%%%%% %%%%%
 %%%%%%%%%%%%%%%%%%%%%%%%%%%%%%%%%%
 \section{Proof of Theorem \ref{thm: main}}\label{sec: main}
 
  Throughout this section the $\sphi$-fields $K$ and $F$ are as in one of Cases \textbf{2S}, \textbf{2Q}, 
  and \textbf{2M}. Our main goal is to establish the following result. 

\begin{thm}\label{thm: sigmatranscendence}
Let $f \in F$ be a solution of the linear $\phi$-equation of order $n$
\begin{equation}\label{eq: difference}
 \phi^{n} (y) + a_{n-1}\phi^{n-1}(y)+\cdots + a_{1} \phi(y)+a_{0} y=0\,,
\end{equation}
where $a_0,\ldots,a_{n-1}\in K$. Then either $f \in K$ or $f$ is $\sigma$-transcendental over $K$.
\end{thm}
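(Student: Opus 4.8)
The plan is to prove the contrapositive: assuming that $f$ is $\s$-algebraic over $K$, I will show that $f\in K$. Write \eqref{eq: difference} in companion form $\phi(Y)=AY$ with $A\in\GL_n(K)$ (reducing the order harmlessly if $a_0=0$) and let $u=(f,\phi(f),\dots,\phi^{n-1}(f))^\top\in F^n$ be the associated solution, so that $f=u_1$. Since $\phi$ and $\s$ commute and $\phi(K)=K$, each $\phi^{j}(f)$ is again $\s$-algebraic over $K$, whence every coordinate of $u$ is $\s$-algebraic. First I would carry out the standard reductions: using Proposition~\ref{propo:existenceparamfieldsolutionqdiff} I replace $(\phi,\s)$ by suitable iterates $(\phi^r,\s^s)$ so that the relevant $\s^s$-Picard--Vessiot extension $L_A$ for $\phi^r(Y)=A_{[r]}Y$ is a \emph{field} containing $u$. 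By Lemma~\ref{lem2} the hypothesis that $f$ is $\s$-algebraic survives, by Remark~\ref{rmk:iteratetheoperatorConditionH} Condition~$\cH$ persists, and the target $f\in K$ is unchanged. After this reduction Lemma~\ref{lemma:relatalgclosedbasefieldsigmaintegralPPVgroup} shows that the ordinary Galois group $\G=\Gal(K_A|K)$ of $\phi(Y)=AY$ is connected and $\sGal(L_A|K)$ is $\s$-integral; Proposition~\ref{propo:schematicalgebraicgaloisgroupcomparaison} realizes $G:=\sGal(L_A|K)$ as a Zariski dense $\s$-closed subgroup of $\G$, and Proposition~\ref{prop:transdeg} identifies $\strdeg(L_A|K)$ with $\sdim(G)$.

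The heart of the argument is a group-theoretic reduction to a system with constant coefficients. I would consider the $C$-vector space $V$ of all solutions of $\phi(Y)=AY$ inside $L_A$ that are $\s$-algebraic over $K$. Because the points of $G$ act as $K$-$\phi\s$-automorphisms of $L_A$, they fix $K$ and commute with $\s$, hence preserve $\s$-algebraicity; so $V$ is stable under $G$, and since $G$ is Zariski dense in $\G$, the subspace $V$ is a $\G$-subrepresentation of the solution space. By construction $u\in V$, so $V\neq 0$. By Lemma~\ref{lem: reducible1} the subsystem cut out by $V$ is equivalent over $K$ to some $\phi(Y)=B_1Y$ with $B_1\in\GL_{r}(K)$, $r=\dim_C V$, all of whose solutions are $\s$-algebraic over $K$; applying Proposition~\ref{propo:existenceparamfieldsolutionqdiff} once more, its $\s$-Picard--Vessiot extension $L_{B_1}$ is a field whose $\s$-transcendence degree over $K$ is zero. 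I would then use the $\s$-Galois correspondence together with the structure of Zariski dense $\s$-closed subgroups of $\s$-dimension zero in a connected linear algebraic group to produce a fundamental matrix $U_1$ of $\phi(Y)=B_1Y$ with $\s(U_1)=\widetilde B\,U_1$ for some $\widetilde B\in\GL_r(K)$. The relation $\phi(\widetilde B)B_1=\s(B_1)\widetilde B$ of \eqref{eq:compatibilitycondition} then holds, so $\phi(Y)=B_1Y$ and $\s(Y)=\widetilde B\,Y$ are compatible, and Proposition~\ref{prop:compatibilityimpliesconstantcoeff} yields $S\in\GL_r(K)$ with $\phi(S)B_1S^{-1}\in\GL_r(\C)$: the subsystem is equivalent over $K$ to one with constant coefficients.

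To finish, I would analyse the constant-coefficient subsystem $\phi(W)=BW$ with $B\in\GL_r(\C)$, into which $S$ transports every element of $V$ (and whose solutions remain $\s$-algebraic and lie in $F$). Using the multiplicative Jordan decomposition of $B$, the unipotent part contributes ``logarithmic'' solutions that are $\s$-transcendental over $K$, so it must act trivially on these solutions; the semisimple part contributes eigenvector solutions, i.e.\ monomials $x^{\alpha}$ in Case~\textbf{2Q} (and their analogues in Cases~\textbf{2S} and \textbf{2M}). The crucial point is that such a solution lies in $F$ only under the arithmetic constraint forcing it into $K$ — $\alpha\in\tfrac1m\Z$ in Case~\textbf{2Q}, and the corresponding conditions in the shift and Mahler cases — which is exactly where the specific shape of the fields $F$ is used. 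It follows that $V\subset K^n$, hence $u\in K^n$ and $f=u_1\in K$, as desired.

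The main obstacle will be the passage from ``$\strdeg(L_{B_1}|K)=0$'' to genuine compatibility over $K$, that is, producing a fundamental matrix with $\s(U_1)=\widetilde B\,U_1$ and $\widetilde B$ defined \emph{over} $K$: the compatibility identity is automatic over $L_{B_1}$ for $\widetilde B=\s(U_1)U_1^{-1}$, but descending $\widetilde B$ to $K$ amounts to controlling the $\s$-closed subgroups of $\s$-dimension zero that are Zariski dense in a connected group, and this is precisely where the new group-theoretic arguments of this section enter. A secondary, more routine difficulty is the bookkeeping involved in passing to the subsystem attached to $V$ and re-applying the field reduction to it while keeping Condition~$\cH$ intact.
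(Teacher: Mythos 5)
Your plan diverges substantially from the paper's proof (which is an induction on the order $n$: a reducible system is split into a lower-order block handled by the induction hypothesis, an irreducible block forced to have size one, and an inhomogeneous order-one equation treated by Proposition \ref{propo:shift_shift_algrank1}), and the step you yourself flag as ``the main obstacle'' is a genuine gap that the group theory available here does not close. You need: if the $\s$-Picard--Vessiot extension of the subsystem $\phi(Y)=B_1Y$ carrying all the $\s$-algebraic solutions has $\s$-transcendence degree zero over $K$, then that system is compatible over $K$ with a $\s$-system, hence equivalent to a constant-coefficient one via Proposition \ref{prop:compatibilityimpliesconstantcoeff}. The paper proves such an implication only when the Galois group is \emph{simple} (Proposition \ref{propo:caracgalspisomono}, resting on \cite[Theorem A.20]{DVHaWib1}, which describes Zariski dense proper $\s$-closed subgroups of simple groups as $\{\tau\mid\s^d(\tau)=h\tau h^{-1}\}$). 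But by Proposition \ref{prop:generalcase} your subsystem has a \emph{connected solvable} Galois group, and for solvable groups the implication fails: a Zariski dense $\s$-closed subgroup of $\mathbb{G}_a$ of $\s$-dimension zero is cut out by an arbitrary nonzero operator $\mathcal{L}\in C[\s]$ (\cite[Corollary A.3]{DVHaWib1}), and such a subgroup is in general not contained in any isomonodromy-type subgroup $\{\beta\mid\s^d(\beta)=\lambda\beta\}$. This is precisely why the paper cannot avoid the separate, delicate treatment of the inhomogeneous order-one case: Proposition \ref{propo:shift_shift_algrank1} analyzes the group $\mathcal{L}(\beta)=0$ via \cite[Theorem A.9]{DVHaWib2}, the descent Lemma \ref{claim:descentsolutionnonhomogen}, the characterization of $\s$-algebraic solutions of $\phi(y)=ay$ (Proposition \ref{propo:sprang1}), and ultimately the Sch\"afke--Singer theorem. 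None of that is recoverable from ``$\s$-dimension zero implies isomonodromic.''

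A secondary issue is the endgame: even granting constant coefficients, in Case \textbf{2S} the equation $\phi(y)=y+1$ has the solution $x/h_1\in K$, so the unipotent part does not produce $\s$-transcendental ``logarithms'' as you assert, and the solution vector need not be a combination of eigenvectors, so the reduction to monomials $x^\alpha$ needs more care. These points are repairable, but only once the main gap is filled --- and filling it essentially forces you back onto the paper's route: the irreducible part is handled by passing to a simple quotient and invoking Lie--Kolchin (Propositions \ref{prop:almostsimplecase}, \ref{prop:generalcase}, \ref{prop: colonne}), while the residual solvable part is peeled off by induction down to the order-one inhomogeneous equation. Your construction of the $\G$-subrepresentation $V$ of $\s$-algebraic solutions is sound (though the strong, functorial $G$-stability is exactly the nontrivial content of Corollary \ref{prop1}, which you should cite rather than assert), and it matches the paper's Proposition \ref{prop: colonne}; it is everything after that point that needs to be rebuilt.
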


  The following simple lemma shows that Theorem \ref{thm: sigmatranscendence} implies Theorem \ref{thm: main}.

\begin{lemma}\label{lemma:algebraicindependenceversussigmaalgebraicdependence}
	Let $L|K$ be a $\s$-field extension. Let $f,g \in L$ such that $g$ is a nonzero solution of a linear $\sigma$-equation over $K$. If $f$ and $g$ are algebraically dependent over $K$, then $f$ is $\sigma$-algebraic over $K$. 
\end{lemma}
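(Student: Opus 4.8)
The plan is to control the ordinary transcendence degree of $K\langle f\rangle_\s$ over $K$, since by Definition \ref{defn:sigmaalgandsigmatransdegree} (and the argument already used in the proof of Lemma \ref{lem2}) the element $f$ is $\s$-algebraic over $K$ precisely when $\trdeg(K\langle f\rangle_\s\mid K)<\infty$. The two ingredients are the finiteness coming from the $\s$-equation satisfied by $g$, and the propagation of the algebraic relation between $f$ and $g$ under the iterates of $\s$.

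First I would exploit the linear $\s$-equation. Writing it as $\s^m(g)=-\sum_{i=0}^{m-1}b_i\s^i(g)$ with $b_0,\dots,b_{m-1}\in K$, one sees that the $K$-subspace of $L$ spanned by $\{\s^i(g)\}_{i\ge 0}$ is stable under $\s$ and has dimension at most $m$. Hence $K\langle g\rangle_\s=K(g,\s(g),\dots,\s^{m-1}(g))$ is a finitely generated field extension of $K$, and therefore $\trdeg(K\langle g\rangle_\s\mid K)\le m<\infty$.

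Next I would use the algebraic dependence to bound $f$ relative to $g$. It provides a nonzero $P\in K[X,Y]$ with $P(f,g)=0$; applying $\s^i$ yields $P^{\s^i}(\s^i(f),\s^i(g))=0$, where $P^{\s^i}$ is obtained by applying $\s^i$ to the coefficients of $P$ and is again nonzero because $\s$ is injective on $K$. Provided $g$ is transcendental over $K$ (so that every $\s^i(g)$ is transcendental over $K$, $\s$ being an automorphism), the expression $P^{\s^i}(X,\s^i(g))$ is a nonzero polynomial in $X$ over $K(\s^i(g))$ having $\s^i(f)$ as a root; thus each $\s^i(f)$ is algebraic over $K(\s^i(g))\subset K\langle g\rangle_\s$. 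Consequently $K\langle f\rangle_\s$ is algebraic over $K\langle g\rangle_\s$, so $\trdeg(K\langle f\rangle_\s\mid K)\le\trdeg(K\langle g\rangle_\s\mid K)\le m$, and $f$ is $\s$-algebraic over $K$.

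The main obstacle is precisely the transcendence hypothesis on $g$ used in the last step: if $g$ were merely algebraic over $K$, the relation $P(f,g)=0$ could reduce to the minimal polynomial of $g$ and impose nothing on $f$, so the argument genuinely requires the dependence to involve $f$. I would remove this difficulty using Condition $\cH$, which here $K$ also satisfies with $\s$ in the role of $\f$, since Lemma \ref{lemma: base fields satisfy H} and its proof are symmetric in the two operators of Cases \textbf{2S}, \textbf{2Q}, \textbf{2M}: if $g$ is algebraic over $K$, then the finite extension $K(g,\s(g),\dots,\s^{m-1}(g))$ is $\s$-stable, hence a finite $\s$-field extension of $K$, which must be trivial, forcing $g\in K$. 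Thus whenever $g\notin K$ it is automatically transcendental over $K$; and this is exactly the case relevant to the deduction of Theorem \ref{thm: main}, where $g\notin K$, so the clean argument above applies.
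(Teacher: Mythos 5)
Your argument is correct and is, at bottom, the same argument as the paper's, just unpacked. The paper's proof is three lines: $g$ is $\s$-algebraic over $K$ because it satisfies a linear $\s$-equation; the algebraic dependence makes $f$ $\s$-algebraic over $K\langle g\rangle_\s$; and one concludes by the transitivity of $\s$-algebraicity, citing \cite[Theorem 4.1.2\,(i)]{Levin}. You prove that transitivity by hand, via the bound $\trdeg(K\langle g\rangle_\s\,|\,K)\leq m$ and the observation that each $\s^i(f)$ is algebraic over $K(\s^i(g))$, which is a perfectly sound (and self-contained) substitute for the citation. Where you genuinely diverge is in flagging the degenerate case: if the dependence relation $P(f,g)=0$ does not actually involve $f$ (equivalently, if $g$ is algebraic over $K$), the lemma as literally stated can fail, and the paper's step ``$f$ is $\s$-algebraic over $K\langle g\rangle_\s$'' silently assumes this does not happen. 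Your observation is legitimate, but be aware that your repair imports hypotheses absent from the lemma's statement: the lemma is asserted for an arbitrary $\s$-field extension $L|K$, where neither Condition $\cH$ for $(K,\s)$ nor the surjectivity of $\s$ (which you use to see that every $\s^i(g)$ stays transcendental over $K$) is available. In the only place the lemma is used --- deducing Theorem \ref{thm: main} from Theorem \ref{thm: sigmatranscendence} in Cases \textbf{2S}, \textbf{2Q}, \textbf{2M} with $g\notin K$ --- all of this holds and your argument goes through; for the general statement one should simply read ``algebraically dependent'' as ``$f$ is algebraic over $K(g)$,'' after which both your proof and the paper's are complete without the extra hypotheses.
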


\begin{proof}
 By assumption $g$ is $\s$-algebraic over $K$ and because $f$ and $g$ are algebraically dependent, $f$ is $\s$-algebraic over $K\langle g\rangle_\s$. By the transitivity of being $\s$-algebraic (\cite[Theorem 4.1.2 (i)]{Levin}),
 $f$ is $\s$-algebraic over $K$. 
\end{proof} 
 
\begin{ex} 
Let us equip $\C(x)$ with two automorphisms $\rho (x)=x+1$ and $\s (x)=x+h$, with $h \in \C $. 
When $h\notin \Q$, the $\s$-transcendence  of the Gamma function has been proved in \cite[Example 3.7]{OvWib}. By Lemma \ref{lemma:algebraicindependenceversussigmaalgebraicdependence}, we then deduce that for $h\notin \Q$, $\Gamma(x)$ and $\Gamma(xh^{-1})$ are algebraic independent   over $\C(x)$.
\end{ex}

Similarly to \cite{ADH}, Theorem \ref{thm: sigmatranscendence} is proved by  
induction on the order $n$ of Equation \eqref{eq: difference}.  
Before proving Theorem~\ref{thm: sigmatranscendence} in full generality, we first consider the following 
special cases. 
\begin{itemize}
\item[-] The function $f$ is a solution of an inhomogeneous equation of order one. 
\item[-] The function $f$ is a solution of a difference equation whose corresponding Galois group is both connected and irreducible.  
\end{itemize}

 %%%%%%%%%%%%%%%%%%%%%%%%%%%%

\subsection{Affine order one equations}\label{sec:affinediffequ} 

We begin this subsection by studying homogeneous order one equations $\f(y)=ay$. The following proposition characterizes the coefficients $a\in K$ for which this equation has a nontrivial $\s$-algebraic solution.  
Case $\textbf{2Q}$ is already proved  in \cite[Proposition 5.3]{DHRqdiffhypergo}. The following proposition summarizes all three cases.

\begin{prop}\label{propo:sprang1}
Let $K$ and $(\phi,\s)$ be defined as in Cases \textbf{2S}, \textbf{2Q}, and \textbf{2M}.   Let 
 $a \in K^*$. Let  $L$ be a $\s$-Picard-Vessiot extension for  $\phi(y)=ay$ and assume that $L$ is a field.
Let $u \in L^*$ be such that 
$\phi(u) = a u $. 
 Then the following statements are equivalent. 
\begin{enumerate}
\item The element $u\in L$ is $\s$-algebraic over $K$.
\item There exist $c \in \C^{*}$, $n\in \Q$ ($n=0$ in Cases $\textbf{2S}$ and $\textbf{2M}$) and $b \in K^{*}$ such that $a=cx^{n}\frac{\phi(b)}{b}$.
\end{enumerate}

\end{prop}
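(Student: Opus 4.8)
The plan is to read both implications off the $\s$-Galois group $G=\sGal(L|K)$, which in this rank-one situation is a $\s$-closed subgroup of $\mathbb{G}_{m}$. Since $L=K\langle u\rangle_{\s}$, the element $u$ is $\s$-algebraic over $K$ precisely when $\strdeg(L|K)=0$, so by Proposition~\ref{prop:transdeg} statement $(1)$ is equivalent to $\sdim(G)=0$. The real content is to translate this group-theoretic condition into the multiplicative normal form for $a$ in $(2)$, and conversely.

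For $(2)\Rightarrow(1)$ I would argue directly, without the Galois group. Writing $a=cx^{n}\phi(b)/b$ and setting $v:=u/b$ one gets $\phi(v)=cx^{n}v$, and a short computation using $\phi\s=\s\phi$ shows that $w:=\s(v)/v$ satisfies $\phi(w)=(\s(x)/x)^{n}\,w$. In Cases \textbf{2S} and \textbf{2M} we have $n=0$, so $\phi(w)=w$ and hence $w\in L^{\phi}=K^{\phi}=\C$; then $\s(v)=wv$ is a first-order $\s$-equation over $K$, so $u=bv$ is $\s$-algebraic. In Case \textbf{2Q} we have $\s(x)/x=q_{2}$, so $\phi(w)=q_{2}^{n}w$ with $q_{2}^{n}\in\C^{*}$, whence $\phi(\s(w)/w)=\s(w)/w$ forces $\s(w)/w\in\C$; thus $w$ is $\s$-algebraic, and since $\s(v)=wv$ so is $v$, and therefore $u$, by transitivity of $\s$-algebraicity (\cite[Theorem~4.1.2]{Levin}).

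For $(1)\Rightarrow(2)$ the first step is to produce a monomial relation. For $r$ large, $K(u,\s(u),\dots,\s^{r}(u))\subset L$ is a Picard--Vessiot extension for the diagonal system $\phi(Y)=\mathrm{diag}(a,\s(a),\dots,\s^{r}(a))Y$, its Galois group $H$ being a closed subgroup of the torus $\mathbb{G}_{m}^{r+1}$. If $u$ is $\s$-algebraic then, for $r$ large, the $\s^{i}(u)$ are algebraically dependent over $K$, so $H$ is a proper subgroup of the torus and hence lies in the kernel of a nontrivial character; the corresponding Laurent monomial $w:=\prod_{i}\s^{i}(u)^{m_i}$, with $(m_i)\neq 0$, is then fixed by all of $H$, so $w\in K^{*}$ by the Galois correspondence (Proposition~\ref{prop:Galois correspondence}). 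Since $\phi(\s^{i}(u))=\s^{i}(a)\s^{i}(u)$, applying $\phi$ to $w$ yields the key identity
\[ \prod_{i}\s^{i}(a)^{m_i}=\frac{\phi(w)}{w},\qquad w\in K^{*}. \]
It remains to extract the normal form for $a$, and here the cases diverge. In Cases \textbf{2S} and \textbf{2Q} the operators induced by $\phi,\s$ on divisors are translations of $\C$ (by $-h_1,-h_2$) and dilations of $\C^{*}$ (by $q_1^{-1},q_2^{-1}$); by the independence hypotheses $\langle\phi,\s\rangle$ acts freely, so each orbit is a copy of $\Z^{2}$ and the restriction of a divisor to an orbit lies in the Laurent ring $\Z[\Phi^{\pm},\Sigma^{\pm}]$. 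Taking divisors gives $P(\Sigma)\,\div(a)=(\Phi-1)\,\div(w)$ with $P(\Sigma)=\sum_i m_i\Sigma^{i}\neq 0$; since $\Phi-1$ is prime and coprime to $P(\Sigma)$ in this UFD, it divides $\div(a)$ orbit by orbit, so $\div(a)=(\Phi-1)\,\div(b)$ for some $b\in K^{*}$, and then $a/(\phi(b)/b)$ has zeros and poles only on the fixed locus, giving $a=c\,\phi(b)/b$ in Case \textbf{2S} and $a=c\,x^{n}\phi(b)/b$ (with $n\in\Q$) in Case \textbf{2Q} (the latter also being contained in \cite[Proposition~5.3]{DHRqdiffhypergo}). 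The hard part is Case \textbf{2M}: here $\phi$ and $\s$ act on divisors by the \emph{expanding} pullbacks $[p_1]^{*}$ and $[p_2]^{*}$, which are neither free nor invertible, so the orbit-by-orbit argument must be replaced by an analysis on the group of places of $K$ over $\mathbb{G}_{m}$ (a solenoid), using the multiplicative independence of $p_1,p_2$ to control the $\langle\phi,\s\rangle$-orbits; the points $0$ and $\infty$ are handled separately, and since $x^{n}=\phi\!\left(x^{n/(p_1-1)}\right)/x^{n/(p_1-1)}$ is a $\phi$-coboundary, any power of $x$ is absorbed into $\phi(b)/b$, which is exactly why $n=0$ in this case. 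I expect this place-theoretic analysis for the Mahler operators to be the main obstacle.
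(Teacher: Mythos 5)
Your proof of $(2)\Rightarrow(1)$, your derivation of the monomial relation $\prod_i\s^i(a)^{m_i}=\phi(w)/w$ via a character of the diagonal torus, and your treatment of Case \textbf{2S} are all sound and close in spirit to the paper (which obtains the same relation by citing \cite[Theorem~3.1]{OvWib}); your packaging of Case \textbf{2S} through the group ring $\Z[\Phi^{\pm},\Sigma^{\pm}]$, where $\Phi-1$ is prime and coprime to $P(\Sigma)$, is a clean variant of the paper's more hands-on argument, which instead orders the classes $[\zeta_i]\in\C/h_1\Z$ along the $h_2$-direction and extracts an extremal coefficient to force $\div_{h_1}(a)=0$. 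Case \textbf{2Q} is deferred to \cite[Proposition~5.3]{DHRqdiffhypergo} in both your write-up and the paper.

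The genuine gap is Case \textbf{2M}, which you explicitly leave open (``I expect this place-theoretic analysis for the Mahler operators to be the main obstacle''). This is not a routine adaptation of the divisor argument: the pullbacks $x\mapsto x^{p_1}$ and $x\mapsto x^{p_2}$ do not act invertibly or freely on places, and nothing in your sketch controls the resulting orbits. The paper avoids this entirely by a logarithmic-derivative reduction. Writing $v_a$ and $t_a$ for the valuation and leading coefficient of $a$, one first solves $\phi(y)=\frac{ax^{-v_a}}{t_a}y$ by a Puiseux series $f\in F^*$. Substituting into the monomial relation and comparing valuations and leading coefficients shows $\prod_\ell\s^\ell(f^{k_\ell})=d\,x^{N/(p_1-1)}b$ for some $d\in\C^*$, $N\in\Z$. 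Applying $\partial=x\frac{d}{dx}$ and using $\partial\phi=p_1\phi\partial$, $\partial\s^\ell=p_2^\ell\s^\ell\partial$, one finds that $\partial f/f$ satisfies an inhomogeneous linear $\phi$-equation \emph{and} an inhomogeneous linear $\s$-equation over $K$; Theorem~\ref{lem:linearlyclosedfieldextension} (Sch\"afke--Singer) then gives $\partial f/f\in K$, so $f$ satisfies both a linear $\phi$-equation and a linear differential equation over $K$, and \cite[Theorem~1.2]{ADH} yields $f\in K$. Hence $a=t_a\,\phi(fx^{v_a/(p_1-1)})/(fx^{v_a/(p_1-1)})$, which is the normal form with $n=0$. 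Without this (or a genuinely worked-out substitute for your solenoid analysis), your proof of $(1)\Rightarrow(2)$ is incomplete in the Mahler case.
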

 \begin{proof}
If (2) holds,  a straightforward computation, similar to the one in the proof of \cite[Proposition 5.3]{DHRqdiffhypergo}, yields $\s\left( \frac{\s(u/b)}{u/b} \right)/ \left(\frac{\s(u/b)}{u/b}\right) \in L^\phi=\C$. This shows that $u$ is $\s$-algebraic over $K$. So $(2)$ implies $(1)$. 

Let us assume that $u$ is  $\sigma$-algebraic over $K$.  The Galois theoretic arguments used in   \cite[Theorem~3.1]{OvWib} for Cases \textbf{2Q} and  \textbf{2S} also apply to Case \textbf{2M} and we find that, since  $u$ is  $\sigma$-algebraic over $K$, there exist
$r\in \Z_{\geq 0}$, $k_{0},\dots,k_{r} \in \Z$, with $k_{0}k_{r}\neq 0$, and $b \in K^*$, such that 
\begin{equation}\label{eq99}
\displaystyle \prod_{\ell=0}^{r}\s^{\ell}(a^{k_{\ell}})=\frac{\phi (b)}{b} \cdot
\end{equation}

Consider first Case $\textbf{2S}$. 
 Following  \cite[\S 4.1]{hardouin2008hypertranscendance}, we define the $h_{1}$-divisor $\div_{h_{1}}(f)$ of $f \in \C(x)$ as the formal sum $\div_{h_{1}}(f) =\sum_{[\alpha] \in \C/h_{1}\Z} n_\alpha[\alpha]$, where $n_\alpha$ is the sum of the valuations of $f$ at the points $[\alpha]=\alpha + h_{1}\Z$. By \cite[Lemme 4.3]{hardouin2008hypertranscendance}, there exists $c \in \C^*$ and $0\neq b \in \C(x)$ such that $f=c \frac{\phi(b)}{b}$ if and only if $\div_{h_{1}}(f)=0$. 
Thanks to this characterization, we only need to prove that $\div_{h_{1}}(a)=0$. Suppose to the contrary that this is not the case and that $\div_{h_{1}}(a)=\sum_{i=1}^m n_i [\zeta_i]$, for some pairwise distinct elements $[\zeta_i] \in \C/h_{1}\Z$ and nonzero integers $n_i$. From \eqref{eq99}, we deduce
\begin{equation}\label{eq:bla}
0=\div_{h_{1}}\left(\frac{\phi(b)}{b}\right)=\sum_{l=0}^{r}k_l\sum_{i=1}^m n_i[\zeta_i -h_{2} l ]. 
\end{equation}
Let 
$$
I=\{i \in \{1,\ldots,m\} \ \vert \  [\zeta_{i}]=[ \zeta_{1} +h_{2}n] \mbox{ for some integer } n \}.
$$ 
Let $i_{1},\ldots,i_{s}$ be pairwise distinct integers such that $I=\{i_{1},\ldots,i_{s}\}$. Up to renumbering, we can assume that 
$$
[\zeta_{i_{1}}] \prec \cdots \prec [\zeta_{i_{s}}]
$$
where, for any $[x],[y] \in \C/h_{1}\Z$, $[x] \prec [y]$ means that $[y] = [x] +kh_{2}(=[x +kh_{2}])$ for some $k \in \N^{*}$. Note that the binary relation  $\prec$ is well defined because $\frac{h_{1}}{h_{2}} \notin \Q$. It is clear that   $[\zeta_{i_{1}} -rh_{2}] \prec  [  \zeta_{i_{k}} -jh_{2}]$ for all $j \in \{0,\ldots,r\}$ and $k \in \{1,\ldots,s\}$ such that $(j,k)\neq (r,1)$.  
Moreover,  for $j \in \{0,\ldots,r\}$ and $i \in \{1,\ldots,m\}\setminus I$, we have $[ \zeta_{i_{1}} -r h_{2}] \not=[\zeta_{i} -jh_{2}]$ by definition of $I$.  Therefore, the coefficient of $[\zeta_{i_{1}} -rh_{2}]$ in equation (\ref{eq:bla}) is equal to $0$, {\it i.e.} $k_{r}n_{i_{1}}=0$. This provides a contradiction.\par 
 
Consider now Case \textbf{2M}. For $f\in F^*$, let $v_{f}\in \Q$ be its valuation and 
let $t_{f}=fx^{-v_{f}}|_{x=0}\in \C^*$.
By \cite[Lemma~20]{Ro15}, the equation $\phi(y) =\frac{ax^{-v_{a}}}{t_{a}} y $ has a 
solution $f\in F^*$. This can also be verified directly by recursively solving for the coefficients of $f$.

  Let $\tilde{f}:= \prod_{\ell=0}^{r}\s^{\ell}(f^{k_{\ell}}) \in F^*$ and $\tilde{c}:= \prod_{\ell=0}^{r}t_{a}^{k_{\ell}}\in \C^*$. With \eqref{eq99} and $\phi\sigma=\sigma\phi$, we find that there exists $N\in \Z$ such that
\begin{equation}\label{eq4}
\phi (\tilde{f})=x^{N}\displaystyle \prod_{\ell=0}^{r}\frac{\s^{\ell}(a^{k_{\ell}})}{t_{a}^{k_{\ell}}}\tilde{f}
=x^{N}\frac{\phi(b)}{b}\frac{\tilde{f}}{\widetilde{c}}=\frac{\phi(x^{N/(p_1-1)}b)}{x^{N/(p_1-1)}b}\frac{\tilde{f}}{\widetilde{c}}\cdot
\end{equation}
 For $g\in F^*$, we have $t_{\phi(g)}=t_{g}$.
By \eqref{eq4}, we obtain $\tilde{c}=1$.  Hence $\dfrac{\tilde{f}}{x^{N/(p_1-1)}b} \in F^\phi=\C$ and there exists $d \in \C^*$ such that 
\begin{equation}\label{eq100}
\displaystyle \prod_{\ell=0}^{r}\s^{\ell}(f^{k_{\ell}})=dx^{N/(p_1-1)}b.
\end{equation}
Let $\partial$ be the derivation $x\frac{d}{dx}$. Since 
$\partial \phi=p_{1} \phi \partial$ and $\partial \s^\ell=p_{2}^\ell \s^\ell \partial$, 
we  can easily compute the logarithmic derivatives of 
$\phi(f) =\frac{ax^{-v_{a}}}{t_{a}} f $ and 
 of  \eqref{eq100}. We obtain that $\partial f/f\in F$ satisfies 
the following linear $\s$-difference and $\phi$-difference equations over $K$:
$$
p_{1} \phi(y)= \frac{\partial \left( ax^{-v_{a}}\right)}{ax^{-v_{a}}} + y \;\;\mbox{ and }\;\;  
\sum_{\ell=0}^r p_{2}^{\ell} k_\ell \s^\ell(y)=\frac{\partial (x^{N/(p_1-1)}b)}{x^{N/(p_1-1)}b}\cdot
$$
 By Theorem~\ref{lem:linearlyclosedfieldextension}, $\partial f/f\in K$. So $f\in F$ satisfies a linear $\phi$-difference equation and a linear differential equation with coefficients in $K$.  By \cite[Theorem~1.2]{ADH} (see also \cite{SchaefkeSinger}), $f\in K$. Therefore 
 $$a =t_a \frac{\phi(fx^{v_{a}/(p_{1}-1)})}{fx^{v_{a}/(p_{1}-1)}}$$
 has the desired form. 
\end{proof}

In \cite[\S 3.3]{DVHaWib2}, the authors study  the difference algebraic relations satisfied by the solutions of a differential equation of the form $y'=ay+b$ , with  $ a, b \in K$. Here, we adapt and generalize some of the arguments from \cite{DVHaWib2} to our context in order to establish the following special case of Theorem \ref{thm: sigmatranscendence}

\begin{prop} \label{propo:shift_shift_algrank1}Let $K$, $F$, and $(\phi,\s)$ be defined as in Cases \textbf{2S}, \textbf{2Q}, and \textbf{2M}.  Let $f\in F$ satisfy ${\phi(f) = a f + b}$ with $ a, b \in K$. Then either $f$ is $\s$-transcendental over $K$, or $f\in K$.
\end{prop}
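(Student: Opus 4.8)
The plan is to assume $f$ is $\sigma$-algebraic over $K$ and to prove $f\in K$. Since $\phi$ is an automorphism of $K$, the case $a=0$ gives $f=\phi^{-1}(b)\in K$, so I may assume $a\in K^*$. The overall strategy is to show that, under the $\sigma$-algebraicity hypothesis, $f$ satisfies a nonzero linear homogeneous $\sigma$-equation over $K$: eliminating $b$ between $\phi(f)=af+b$ and its $\phi$-transform already shows that $f$ satisfies a linear homogeneous $\phi$-equation of order at most two over $K$, so $f\in\mathrm{Sol}_{\phi,K,F}$; once I also have $f\in\mathrm{Sol}_{\sigma,K,F}$, Theorem \ref{lem:linearlyclosedfieldextension} forces $f\in K$. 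I would first dispose of the homogeneous case $b=0$ (so $f$ itself solves $\phi(f)=af$): here Proposition \ref{propo:sprang1} gives $a=cx^n\phi(d)/d$, and a short leading-term computation in the series field $F$ shows that an $F$-solution forces the ramified character $cx^n$ to be trivial, whence $a=\phi(d)/d$, $f/d\in F^\phi=\C$, and $f\in K$.

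For $b\neq 0$ I would bring in the $\sigma$-Galois machinery. Attach to the equation the matrix $A=\begin{pmatrix} a & b \\ 0 & 1\end{pmatrix}\in\GL_2(K)$ with fundamental matrix $U=\begin{pmatrix} u & f \\ 0 & 1\end{pmatrix}$, where $u$ solves $\phi(u)=au$. Using Proposition \ref{propo:existenceparamfieldsolutionqdiff} (and Lemma \ref{lem2} to preserve $\sigma$-algebraicity under passing to iterates, the affine shape of the equation being preserved under $\phi\mapsto\phi^r$), I may assume the $\sigma$-Picard-Vessiot extension $L_A$ is a field. Then $\G=\Gal(K_A|K)$ is connected and $G=\sGal(L_A|K)$ is a Zariski-dense, $\sigma$-integral $\sigma$-closed subgroup of $\G$ (Lemma \ref{lemma:relatalgclosedbasefieldsigmaintegralPPVgroup}, Proposition \ref{propo:schematicalgebraicgaloisgroupcomparaison}). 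A computation with $U$ shows $\G,G\subseteq B:=\mathbb{G}_m\ltimes\mathbb{G}_a=\left\{\begin{pmatrix}\alpha&\beta\\0&1\end{pmatrix}\right\}$, the action on a point being $\tau(u)=\alpha u$ and $\tau(f)=f+\beta u$.

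I would then split according to whether $u$ is $\sigma$-algebraic over $K$. If $u$ is $\sigma$-transcendental, then the image $\pi(G)\subseteq\mathbb{G}_m$ has full $\sigma$-dimension, so a generic point $\alpha$ is $\sigma$-transcendental. Assuming $f$ is $\sigma$-algebraic, the unipotent part $G_a=G\cap\mathbb{G}_a$ has $\sigma$-dimension $0$ and is therefore of the form $V(\ell)$ for a nonzero $\ell=\sum_{i=0}^r c_i\sigma^i\in\C[\sigma]$. Since $G_a$ is normal in $G$ with $\pi(G)$ acting by the scaling $\beta\mapsto\alpha\beta$, stability of $V(\ell)$ tested on a generic $\alpha$ forces the lower coefficients of $\ell$ to vanish, i.e. $\ell=c_r\sigma^r$; the $\sigma$-integrality of $G$ (via $\sigma$-reducedness of $G_a$, appealing to the appendix) then excludes $r\geq 1$, leaving $G_a=\{1\}$. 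This makes $\G$ a torus, so there is $c\in\C$ with $f-cu$ fixed by $\G$ and hence $f=cu+k$ with $k\in K$ by Proposition \ref{prop:Galois correspondence}; as $u$ is $\sigma$-transcendental and $f$ is $\sigma$-algebraic we must have $c=0$, so $f=k\in K$. If instead $u$ is $\sigma$-algebraic, then after a gauge transformation reducing to $a=cx^n$ (in Cases \textbf{2S} and \textbf{2M}, $a=c$ and $\sigma(u)=\mu u$ for some $\mu\in\C^*$), I would run the $\sigma$-Galois correspondence over the intermediate field $K(u)$: the twisted operator $\widetilde\ell=\sum_i c_i\mu^{-i}\sigma^i$ has the property that $\widetilde\ell(f)$ is invariant under the Galois group of $f$ over $K(u)$, hence $\widetilde\ell(f)\in K(u)$; a functional-equation analysis in the variable $u$ shows $\widetilde\ell(f)=P_1u+P_0$ with $P_1\in\C$ and $P_0\in K$, and applying $\sigma-\mu$ (which kills $u$, since $\sigma(u)=\mu u$) yields $(\sigma-\mu)\widetilde\ell(f)\in K$, a nonzero linear $\sigma$-equation for $f$ over $K$. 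Homogenizing this and invoking Theorem \ref{lem:linearlyclosedfieldextension} gives $f\in K$.

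The step I expect to be the main obstacle is exactly the coupling between the multiplicative and additive parts of $B$, that is, upgrading ``$f$ is $\sigma$-algebraic'' to ``$f$ satisfies a linear $\sigma$-equation over $K$'' in the presence of the $u$-twist $\tau(f)=f+\beta u$. When $u$ is $\sigma$-transcendental this is resolved by the group-theoretic normality-plus-scaling argument together with $\sigma$-reducedness, whose $\sigma$-group ingredients must be taken from the appendix; when $u$ is $\sigma$-algebraic it is resolved by descent through $K(u)$ and elimination of $u$ via its linear $\sigma$-relation. I expect Case \textbf{2Q} to be the most delicate point of the latter, since there the relation satisfied by $u$ involves the ramified character $x^n$ rather than a constant $\mu$, so that the elimination operator $\sigma-\mu$ must be replaced by one with non-constant coefficients, requiring extra care to keep the resulting $\sigma$-equation defined over $K$.
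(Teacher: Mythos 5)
Your proof follows the paper's architecture in its main lines (the same $2\times 2$ system with fundamental matrix $\begin{pmatrix}u&f\\0&1\end{pmatrix}$, reduction to a field $\s$-Picard--Vessiot extension, a dichotomy on whether $u$ is $\s$-algebraic, Proposition \ref{propo:sprang1} to normalize $a$, descent of $\widetilde{\mathcal{L}}(f)$ through $K(u)$, and a final appeal to Theorem \ref{lem:linearlyclosedfieldextension}), but two steps have genuine gaps. In the branch where $u$ is $\s$-transcendental, the paper invokes Lemma \ref{lem:homogensigmatransnonhomogtoo} (i.e.\ \cite[Proposition 3.16]{DVHaWib2}); you replace this by a group-theoretic argument that does not close. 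Even granting that normality plus a $\s$-transcendental $\alpha$ forces the defining operator of $G_u$ to be a monomial $c_r\s^r$, your exclusion of $r\geq 1$ via ``$\s$-reducedness of $G_u$'' is unjustified: $G_u=\sGal(L_A|K\langle u\rangle_\s)$ is a $\s$-closed subgroup of the $\s$-integral group $\sGal(L_A|K)$, but its coordinate ring is a \emph{quotient} of $C\{\sGal(L_A|K)\}$, and quotients of $\s$-domains need not be $\s$-reduced; nor can Lemma \ref{lemma:relatalgclosedbasefieldsigmaintegralPPVgroup} be applied over the base $K\langle u\rangle_\s$, which is not inversive. Worse, the inference ``$G_u=\{1\}$ makes $\G$ a torus'' conflates the $\s$-Galois group with its Zariski closure: $G\cap\mathbb{G}_u=1$ does not imply $\G\cap\mathbb{G}_u=1$ for a Zariski-dense $\s$-closed subgroup $G$ of $\G$, so the existence of $c\in\C$ with $f-cu\in K$ is not established by this route.

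Second, Case \textbf{2Q} with $n\neq 0$ is not actually proved: you only flag that the elimination operator must acquire non-constant coefficients and will ``require extra care''. This is precisely where the paper needs a genuinely separate argument --- the dichotomy of \cite[Theorem A.9]{DVHaWib2}, a valuation computation exploiting the multiplicative independence of $q_1$ and $q_2$ to rule out $\s^{\nu}(u)/\s^m(u)\in K$, and an iterated application of the descent Lemma \ref{claim:descentsolutionnonhomogen} --- and nothing in your sketch substitutes for it. The remaining parts (the case $a=0$, the homogeneous case via a leading-coefficient computation, and the case where $u$ is $\s$-algebraic with $n=0$) are essentially the paper's proof and are fine.
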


Recall that $\phi$ is an automorphism of $K$. Therefore, the result is trivial if $a=0$. So let us assume that  $a\neq 0$ and let us  consider the $\phi$-system 
\begin{equation}\label{eq2}
\phi  (Y)=\begin{pmatrix}
a&b\\
0&1
\end{pmatrix}Y=AY.
\end{equation}
Then, $\begin{pmatrix} f \\ 1\end{pmatrix}$ is a vector solution of \eqref{eq2}. Recall that $F^{\phi}=K^{\phi}$.
By  Proposition \ref{propo:existenceparamfieldsolutionqdiff},  
there exist $r,s \in \N$ and a $\s^s$-Picard-Vessiot extension $L_{A}$ over $(K,\phi^{r})$ for the system $\phi^r(Y)=A_{[r]}Y$ with $A_{[r]}=\phi^{r-1}(A) \cdots A$ such  that the following properties hold. 

\begin{itemize}
 \item $L_{A}$ is a field.
 \item $f\in L_{A}$.
 \item Let $K_{A}\subset L_{A}$ denote the Picard-Vessiot extension as in Proposition~\ref{propo:schematicalgebraicgaloisgroupcomparaison}. Then 
 the classical Galois group $\Gal(K_{A} | K)$ of $\phi^r(Y)=A_{[r]} Y$ over $(K,\phi^{r})$ coincides with the identity component of the classical Galois group  of ${\phi(Y)=A Y}$ over $(K,\phi)$.
 \end{itemize}
Note that the equation corresponding to $\phi^r(Y)=A_{[r]}Y$ is still an affine order one equation. Without loss of generality, we may assume that  $r=1$. By Lemma \ref{lem2},  without loss of generality, up to replacing $\s$ by some  power of $\s$, we may also reduce to the case where $s=1$.\par 
 We may assume that the fundamental matrix is of the form
$U=\begin{pmatrix}
u&f\\
0&1
\end{pmatrix}$, where $0\neq u\in L_{A}$ is a solution of $\phi (u)=au$, since the lower left entry of $U$ can always be eliminated by subtracting a multiple of  $\begin{pmatrix} f \\ 1\end{pmatrix}$ from the first column. Let $\sGal( L_{A}|K)$ be the $\s$-Galois group. 
 Via its action on the fundamental matrix $U$, for any $\C$-$\s$-algebra $B$,  $\sGal( L_{A}|K)(B)$  can be represented as a subgroup of
$$
\left. \left\{ \begin{pmatrix}
\alpha &\beta\\
0&1
\end{pmatrix}\right| \alpha\in B^{*},\beta \in B \right\}.
$$ 
Let $\mathbb{G}_u$ denote the algebraic subgroup of $\GL_{2,\C}$ given by
$$
\mathbb{G}_u(B)=\left\{ \begin{pmatrix}1 & \beta \\ 0 &1 \end{pmatrix} \Big|\ \beta \in B \right\},
\hbox{~for any $\C$-$\s$-algebra $B$,}
$$
and set $G_u= \sGal( L_{A}|K)  \cap\mathbb{G}_u$. Since $\tau(u\otimes 1)= u \otimes \alpha$, for $\tau=\begin{pmatrix}
\alpha &\beta\\
0&1
\end{pmatrix}\in \sGal( L_{A}|K)(B)$,  the Galois correspondence implies that
$G_u=\sGal(L_{A}|K\langle u\rangle_\s)$.
Moreover, we have $\phi(f/u)= f/u+ b/(au)$, so that $L_{A}|K\langle u\rangle_\s$ is a
$\s$-Picard-Vessiot extension for $\phi(y)= y + b/(au)$.
The action of an element $\tau=\begin{pmatrix}1 & \beta \\ 0 &1 \end{pmatrix}\in G_u(B)=\sGal(L_{A}|K\langle u\rangle_\s)(B)$ on $L_{A}$ is given by
\begin{equation}\label{eq3}
\tau\l(\frac{f}{u} \otimes 1 \r)=\frac{f}{u}\otimes 1+ 1\otimes\beta. 
\end{equation}
The situation is summarized in the following picture:
$$
\xymatrix{  L_{A} \ar@{-}[d]  \ar@/_1pc/@{-}[d]_{G_u}  \ar@/^3pc/@{-}[dd]^{\sGal( L_{A}|K)} \\
K\langle u \rangle_\s \ar@{-}[d]   \\
K }
$$

The proof of   \cite[Proposition 3.16]{DVHaWib2} for nonhomogeneous differential equations of order one with discrete parameters passes 
word for word to our context and yields the following lemma. 
 \begin{lemma}\label{lem:homogensigmatransnonhomogtoo}
 Let $L|K$ be a $\sphi$-field extension such that $C=K^\phi=L^\phi$. We fix $u,f \in L, u \neq 0$, such that $\phi(f)=af +b$ and $\phi(u)=au$ with $a,b \in K$. Assume that $f \notin K$ and that $\s\colon K\to K$ is surjective. If $u$ is $\s$-transcendental over $K$, then 
 $f$ is $\s$-transcendental over $K$.
 \end{lemma}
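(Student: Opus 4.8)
The plan is to set up the $\s$-Galois theory of the affine system attached to $f$ and $u$ and then reduce the statement to a concrete analysis of order-one inhomogeneous $\phi$-equations over the rational function field generated by the $\s$-orbit of $u$. Note first that $\phi(u)=au$ with $u\neq0$ forces $a\neq0$. I argue by contradiction: suppose $f$ is $\s$-algebraic over $K$ while $u$ is $\s$-transcendental over $K$ and $f\notin K$. First I would observe that $L_A:=K\langle u,f\rangle_\s\subseteq L$ is a field and a $\s$-Picard--Vessiot extension of $K$ for the companion system of $\phi(Y)=\left(\begin{smallmatrix} a & b\\ 0 & 1\end{smallmatrix}\right)Y$, with fundamental matrix $U=\left(\begin{smallmatrix} u & f\\ 0 & 1\end{smallmatrix}\right)$; indeed $\det U=u\neq0$, $K\langle U\rangle_\s=L_A$, and $L_A^\phi\subseteq L^\phi=C=K^\phi$ gives $L_A^\phi=K^\phi$. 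Exactly as in the surrounding proposition, $G=\sGal(L_A|K)$ is a $\s$-closed subgroup of upper triangular matrices $\left(\begin{smallmatrix}\alpha & \beta\\ 0 & 1\end{smallmatrix}\right)$, acting by $\tau(u)=\alpha u$ and $\tau(f/u)=f/u+\beta$, and $G_u:=G\cap\mathbb{G}_a=\sGal(L_A|K\langle u\rangle_\s)$, where $\mathbb{G}_a$ is the group of unipotent matrices $\left(\begin{smallmatrix}1 & \beta\\ 0 & 1\end{smallmatrix}\right)$.

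Next I would extract a linear relation. Since $f$ is $\s$-algebraic over $K$, it is $\s$-algebraic over the larger field $K\langle u\rangle_\s$, hence so is $w:=f/u$; as $L_A=K\langle u\rangle_\s\langle w\rangle_\s$ with $w$ a solution of the affine equation $\phi(w)=w+b/(au)$ over $K\langle u\rangle_\s$, Proposition~\ref{prop:transdeg} gives $\sdim(G_u)=\strdeg(L_A|K\langle u\rangle_\s)=0$. Thus $G_u$ is a proper $\s$-closed subgroup of $\mathbb{G}_a$, so it is annihilated by a nonzero operator $\ell=c_d\s^d+\cdots+c_0\in C[\s]$, which I take of minimal order $d$ so that $c_d\neq0$. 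From $\tau(w)=w+\beta$ and $\ell(\beta)=0$ on $G_u$, the Galois correspondence (Proposition~\ref{propo:spgaloiscorresptransdeg}) yields $P:=\ell(w)\in K\langle u\rangle_\s$. Writing $t_i:=\s^i(u)$, the $t_i$ are algebraically independent over $K$ (this is the meaning of $\s$-transcendence of $u$) and $\phi(t_i)=\s^i(a)\,t_i$, so $K\langle u\rangle_\s=K(t_0,t_1,\dots)$ and each place $t_i=0$ carries a residue with $\operatorname{Res}_{t_i=0}(\phi(R))=\s^i(a)^{-1}\phi(\operatorname{Res}_{t_i=0}R)$. Applying $\operatorname{Res}_{t_i=0}$ to $\phi(P)-P=\ell(b/(au))$ shows that $\rho_i:=\operatorname{Res}_{t_i=0}(P)$ solves $\phi(\rho_i)=\s^i(a)\rho_i+c_i\s^i(b)$; since $c_i\s^i(f)$ solves the same equation and the homogeneous solutions in $L$ are $C\,t_i$, we get $\rho_i=c_i\s^i(f)+\kappa_i t_i$ with $\kappa_i\in C$, whence $\s^d(f)\in K\langle u\rangle_\s$.

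The hard part is the ensuing descent: I must show that every $h\in K\langle u\rangle_\s$ solving $\phi(h)=\s^d(a)\,h+\s^d(b)$ already lies in $K+C\,t_d$. Here I would work place by place in $K(t_0,t_1,\dots)$. For any index $N\neq d$ on which $h$ depends, view $h\in k(t_N)$ with $k=K(t_j:\,j\neq N)$; the leading coefficient at $t_N=\infty$, the top coefficient of any pole at $t_N=0$, and (via the $\phi$-invariance of the reduced monic denominator) the polar part at finite points are each forced to solve a homogeneous order-one equation whose only nonzero solutions in $L$ are monomials $t_d^{\epsilon}t_N^{m}$ ($\epsilon\in\{0,1\}$, $m\neq0$) genuinely involving $t_N$, hence lying outside $k$. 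Therefore these coefficients vanish, so $h$ is independent of $t_N$, reducing to $h\in K(t_d)$; the same analysis with the exceptional index $d$ then forces $\deg_{t_d}h\le1$ with a $C$-leading coefficient and no poles, i.e. $h\in K+C\,t_d$. This local computation, which transcribes \cite[Proposition~3.16]{DVHaWib2}, is where the bulk of the work lies, the key mechanism being that $\s$-transcendence of $u$ is precisely the algebraic independence of the $t_i$, so no unexpected solution can hide in a proper subfield.

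Finally I would conclude. Applying the descent to $h=\s^d(f)$ gives $\s^d(f)=h_0+\lambda' t_d$ with $h_0\in K$ and $\lambda'\in C$; then $h_0$ solves $\phi(h_0)=\s^d(a)h_0+\s^d(b)$, and since $\s\colon K\to K$ is surjective, $\hat h:=\s^{-d}(h_0)\in K$ solves $\phi(\hat h)=a\hat h+b$. Hence $g:=f-\hat h$ satisfies $\phi(g)=ag$, so $g=\lambda u$ for some $\lambda\in L^\phi=C$, giving $f=\hat h+\lambda u$. If $\lambda\neq0$ then $u=\lambda^{-1}(f-\hat h)\in K\langle f\rangle_\s$ is $\s$-algebraic over $K$, contradicting the $\s$-transcendence of $u$; if $\lambda=0$ then $f=\hat h\in K$, contradicting $f\notin K$. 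In either case we reach a contradiction, so $f$ is $\s$-transcendental over $K$.
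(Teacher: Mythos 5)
Your proof is correct and follows the route the paper intends: the paper establishes this lemma only by asserting that the proof of \cite[Proposition 3.16]{DVHaWib2} transfers word for word to the discrete-parameter setting, and your argument --- the $\s$-Galois group of the $2\times2$ system, the properness of $G_u\subset\mathbb{G}_a$ yielding a nonzero $\ell\in C[\s]$ with $\ell(f/u)\in K\langle u\rangle_\s$, the residue extraction in the purely transcendental extension $K(\s^i(u):i\geq 0)$ giving $\s^d(f)\in K\langle u\rangle_\s$, and the place-by-place descent to $K+C\,\s^d(u)$ --- is exactly that transferred argument with the details supplied. I see no gaps.
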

 
We will also need the following auxiliary result that generalizes \cite[Lemma 6.5]{HS}. 

\begin{lemma}\label{claim:descentsolutionnonhomogen}
For $\tilde{a},\tilde{b} \in K$, assume that there exists a $\s$-Picard-Vessiot extension $L_A$ for   $ \phi(Y)=  \begin{pmatrix}
\tilde{a}&\tilde{b}\\
0&1
\end{pmatrix}Y=AY $ over $K$ that is a field. Consider an  intermediate $\phi$-field $L$ with $L_{A}|L|K$ and let $x\in L_A^*$ be
such that 
$\phi(x)/x=\alpha \in L^*$.  If $\phi(y) = \tilde{a} y + \tilde{b}$ has a solution in $L(x)$, then it has a solution in $L$.
\end{lemma}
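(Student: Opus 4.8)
The plan is as follows. Since $A=\begin{pmatrix}\tilde a&\tilde b\\0&1\end{pmatrix}\in\GL_2(K)$, the linear part is invertible: $\tilde a=\det(A)\in K^*$. If $x\in L$ there is nothing to prove, so assume $x\notin L$. The hypothesis $\phi(x)=\alpha x$ with $\alpha\in L^*$ shows that $\phi$ restricts to an automorphism of the intermediate $\phi$-field $L(x)\subseteq L_A$, since $\phi(L(x))=L(\alpha x)=L(x)$. Two facts will be used repeatedly. First, $L(x)^\phi\subseteq L_A^\phi=C=\C$, which is algebraically closed of characteristic zero. Second, because $L_A$ is a field, the homogeneous solutions $\{w\in L_A:\phi(w)=\tilde a w\}$ form a one-dimensional $C$-vector space (if $w_1,w_2$ are nonzero solutions then $\phi(w_1/w_2)=w_1/w_2$, so $w_1/w_2\in C$). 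I will treat separately the cases where $x$ is algebraic and where $x$ is transcendental over $L$.

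Suppose first that $x$ is algebraic over $L$, of degree $d\geq2$, with minimal polynomial $T^d+c_{d-1}T^{d-1}+\dots+c_0\in L[T]$; since this polynomial is irreducible of degree $d\geq2$ we have $c_0\neq0$. Applying $\phi$ to $x^d+c_{d-1}x^{d-1}+\dots+c_0=0$ and dividing by $\alpha^d$ produces a second monic degree-$d$ polynomial vanishing at $x$; by uniqueness of the minimal polynomial, comparing constant terms gives $\alpha^{-d}\phi(c_0)=c_0$, i.e. $\phi(c_0)=\alpha^d c_0$. As $\phi(x^d)=\alpha^d x^d$, this yields $\phi(x^d/c_0)=x^d/c_0$, so $x^d/c_0\in L_A^\phi=C$ and hence $x^d\in L^*$. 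Thus $L(x)/L$ is a Kummer extension, Galois and cyclic of order $d$, generated by $g\colon x\mapsto\zeta x$ for a primitive $d$-th root of unity $\zeta\in C\subseteq L$. A direct check using $\alpha\in L$ and $\phi(\zeta)=\zeta$ shows $g\phi=\phi g$. If $z\in L(x)$ solves $\phi(z)=\tilde a z+\tilde b$, then, because $g$ fixes $\tilde a,\tilde b\in K$ and commutes with $\phi$, each $g^i(z)$ is again a solution; averaging, $\bar z:=\tfrac1d\sum_{i=0}^{d-1}g^i(z)$ is a $g$-invariant solution, so $\bar z\in L(x)^{\langle g\rangle}=L$, as desired.

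Now suppose $x$ is transcendental over $L$, so that $L(x)$ is the function field of $\mathbb{P}^1_L$. For each $e\in C^*$ the $L$-automorphism $\psi_e\colon x\mapsto ex$ of $L(x)$ commutes with $\phi$ (again because $\phi(e)=e$ and $\alpha\in L$), fixes $K$, and hence maps solutions of $\phi(y)=\tilde a y+\tilde b$ to solutions. Fix $e_0\in C^*$ that is not a root of unity; scaling by $e_0$ fixes no closed point of $\mathbb{P}^1_L$ other than $0$ and $\infty$. The crucial step is to prove that any solution $z\in L(x)$ lies in $L[x,x^{-1}]$, i.e. has no poles away from $0,\infty$. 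Set $v:=\psi_{e_0}(z)-z$, which is a homogeneous solution in $L(x)$. If $v=0$, then the polar divisor of $z$ is invariant under scaling by $e_0$, hence supported on $\{0,\infty\}$. If $v\neq0$, then $\psi_{e_0}(v)$ is again a nonzero homogeneous solution, thus a $C^*$-multiple of $v$, so $\div(v)$ is scaling-invariant and therefore supported on $\{0,\infty\}$; consequently $z$ and $\psi_{e_0}(z)=z+v$ have the same polar part at every closed point outside $\{0,\infty\}$, and once more the finite polar divisor of $z$ is scaling-invariant, hence trivial. In both cases $z\in L[x,x^{-1}]$. Writing $z=\sum_n z_n x^n$ with $z_n\in L$ and comparing the coefficients of $x^0$ in $\phi(z)=\tilde a z+\tilde b$, using $\phi(z_n x^n)=\phi(z_n)\alpha^n x^n$ and the $L$-linear independence of the powers of $x$, yields $\phi(z_0)=\tilde a z_0+\tilde b$. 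Hence $z_0\in L$ is a solution.

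The main obstacle is the transcendental case, precisely the claim that a solution $z\in L(x)$ has no poles outside $\{0,\infty\}$: this is where the $\phi$-structure must be married to the geometry of $\mathbb{P}^1_L$, and it is controlled by the scaling automorphisms $\psi_e$ together with the one-dimensionality of the space of homogeneous solutions. Once the Laurent structure is secured, extracting the constant term is immediate, and the algebraic case is dispatched cleanly by Kummer theory and averaging. I should verify at the outset that $C=\C$ is algebraically closed of characteristic zero, guaranteeing the existence of a primitive $d$-th root of unity and of a non-torsion $e_0\in C^*$, as well as the admissibility of dividing by $d$; all of this holds in Cases \textbf{2S}, \textbf{2Q}, and \textbf{2M}.
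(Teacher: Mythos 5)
Your proof is correct, and it reaches the conclusion by the same two-case skeleton (algebraic versus transcendental $x$, averaging in the first case, extracting a "constant coefficient" in the second), but the implementation of both cases is genuinely different from the paper's. In the algebraic case the paper does not compute the extension $L(x)|L$ at all: it takes the (finite) orbit of the given solution under the difference Galois group of $\phi(Y)=AY$ over $L$, averages, and invokes the Galois correspondence (Proposition~\ref{prop:Galois correspondence}) to land in $L$. Your Kummer-theoretic detour --- showing $\phi(c_0)=\alpha^d c_0$, hence $x^d/c_0\in L_A^\phi=\C$ and $x^d\in L$, so that $L(x)|L$ is cyclic generated by $x\mapsto\zeta x$ --- is more explicit and avoids the difference Galois correspondence, at the price of extra computation; both are valid averaging arguments in characteristic zero. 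In the transcendental case the divergence is larger: the paper simply extends $\phi$ to $L((x))$ by $\phi(\sum a_kx^k)=\sum\phi(a_k)\alpha^kx^k$, expands the rational solution as a Laurent series at $x=0$, and reads off $\phi(a_0)=\tilde a a_0+\tilde b$ from the coefficient of $x^0$; there is no need to control poles. Your route instead proves the stronger statement that any solution lies in $L[x,x^{-1}]$, using the scaling automorphisms $\psi_e$, the one-dimensionality of the homogeneous solution space in the field $L_A$, and the fact that a finite $\psi_{e_0}$-invariant divisor on $\P^1_L$ is supported on $\{0,\infty\}$ (for the latter you should really argue that all orbits of closed points outside $\{0,\infty\}$ are infinite, not merely that no such point is fixed; this follows by applying your fixed-point computation to every power $e_0^N$, which is again non-torsion). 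This works, and the Laurent-polynomial structure it exhibits is a nice byproduct, but the paper's $L((x))$ trick makes the entire pole analysis unnecessary.
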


\begin{proof}[Proof of Lemma \ref{claim:descentsolutionnonhomogen}]
Let $g\in L(x)$ be a solution of the equation $\phi(y) =  \tilde{a} y + \tilde{b}$. 
Let us first assume that $x$ is algebraic over $L$. 
Then $g$ is algebraic over $L$ and the orbit of $g$ under the Galois group $\G$ of $\phi(Y)=AY $ 
over $L$ is finite. 
Let  $g_{1},\dots, g_{\ell}\in L_A$ denote the elements in the $\G$-orbit of $g$ and set $\tilde{g}:=g_{1}+\dots+g_{\ell}$. Since $\tilde{g}$ 
is fixed by $\G$ it belongs to $L$ by Proposition~\ref{prop:Galois correspondence}. 
As $\tilde{a},\tilde{b}\in K\subset L$, we have $\phi(g_{k}) = \tilde{a} g_{k} + \tilde{b}$ for $1\leq k\leq \ell$.  Therefore, 
$\phi(\tilde{g}/\ell) = \tilde{a} \tilde{g}/\ell + \tilde{b}$ with $\tilde{g}/\ell\in L$. This proves the claim in that case. \par 

Now, let us assume that $x$ is transcendental over $L$. We can extend $\phi$  to the field of Laurent series $L((x))$ by setting $\phi(\sum_{k\geq r} a_kx^k)=\sum_{k\geq r} \phi(a_k)\alpha^kx^k$. 
Writing $g=\sum_{k\geq r}a_kx^k$ with $a_k\in L$ and $r\in\mathbb Z$, we get that $\phi(g)=\sum_{k\geq r} \phi(a_k)\alpha^kx^k$. Using the 
uniqueness of the Laurent series expansion and the fact that $\phi(g)=\tilde{a}g+\tilde{b}$, we deduce that $\phi(a_0)=\tilde{a}a_0+\tilde{b}$. 
Since $a_0\in L$, this completes the proof of the lemma. 
\end{proof}

  After these preliminaries, we are ready to prove Proposition \ref{propo:shift_shift_algrank1}. 
  
  \begin{proof}[Proof of Proposition \ref{propo:shift_shift_algrank1}]
  Let us argue by contradiction and suppose that $f$ is $\s$-algebraic and does not belong to $K$. By Lemma \ref{lem:homogensigmatransnonhomogtoo}, the element $u$ is $\s$-algebraic over $K$. By Proposition \ref{propo:sprang1}, there exist $c \in \C^*,\tilde{b} \in K^*$ and $n \in \Q$, which is zero in Cases $\textbf{2S}$ and $\textbf{2M}$, such that $a =c x^n \phi(\tilde{b})/\tilde{b}$. Replacing $f$ by $f/\tilde{b}$ and $b$ by $b/\f(\widetilde{b})$, we can assume that $\tilde{b}=1$.   By Proposition~\ref{prop:transdeg}, $\strdeg ( L_{A} |K\langle u\rangle_\s)=0 =\sdim (G_u)$,  and the $\s$-algebraic group $G_u$ must be a proper subgroup of $\mathbb{G}_u$ whose $\s$-dimension is $1$. Let $\C[\s]$ denote the set of linear $\s$-operators with coefficients in $\C$.  By \cite[Corollary~A.3]{DVHaWib1}, there exists $\mathcal{L}\in \C[\s]$, such that for $B$ a $\C$-$\s$ algebra, $$G_u (B)= \left\{ \begin{pmatrix}
  1 & \beta \\
  0& 1  \end{pmatrix}\in\GL_n(B)\ \Big|\  \mathcal{L}(\beta)=0\right\}.$$
  
Let us first consider the case $n=0$.   As a consequence of \eqref{eq3}, for $\tau\in\sGal(L_{A}|K\langle u\rangle_\s)(B)=G_u(B)$,
$$\tau\l(\mathcal{L}\l(\frac{f}{u}\r) \otimes 1 \r)=\mathcal{L}\l(\tau\l(\frac{f}{u} \otimes 1\r) \r)=\mathcal{L}\l(\frac{f}{u}\r)\otimes 1+ 1\otimes \mathcal{L}(\beta)=\mathcal{L}\l(\frac{f}{u}\r) \otimes 1.$$
So the Galois correspondence implies that $\mathcal{L}(\frac{f}{u}) \in K\langle u\rangle_\s$. Since $a\in\C^*$, we find that 
$$
\phi(\frac{\s(u)}{u})=\frac{\s(\phi(u))}{\phi (u)}=\frac{\s(a)\s(u)}{au}=\frac{\s(u)}{u} \cdot
$$ 
Then $d:=\frac{\s(u)}{u} \in K^\phi=\C$, so that $K\langle u\rangle_\s=K(u)$. 
Since $\s(u) =d u$ with $d\neq 0$ and $\mathcal{L}(\frac{f}{u}) \in K(u)$, there exists a nonzero
$\widetilde{\mathcal{L}}\in \C[\s]$ such that 
$\widetilde{\mathcal{L}}(f) \in K(u)$. Recall that $a\in \C^*$ so that $\widetilde{\mathcal{L}}(f)$ is a solution of $\phi(y)=ay +\widetilde{\mathcal{L}}(b)$. By Lemma \ref{claim:descentsolutionnonhomogen}, there exists $g \in K$ such that $\phi(g)=ag+\widetilde{\mathcal{L}}(b)$. Then $\frac{\widetilde{\mathcal{L}}(f)-g}{u}\in K(u)$  
is fixed by $\phi$, and since $K(u)^{\phi}\subset L_{A}^{\phi}=\C$, 
there exists $d' \in \C$ such that $\widetilde{\mathcal{L}}(f)= d'u+g$. Since $u$ and $g$ both satisfy 
some linear $\s$-difference equations over $K$, the same holds for $\widetilde{\mathcal{L}}(f)$. 
Therefore, $f$ satisfies a nontrivial linear $\s$-difference equation over $K$. By Theorem \ref{lem:linearlyclosedfieldextension}, we find  $f \in K$. This contradiction concludes the proof in the case $n= 0$ (and hence in Cases $\textbf{2S}$ and $\textbf{2M}$).

Now, let us consider Case $\textbf{2Q}$ with $n \neq 0$. 
By \cite[Theorem~A.9]{DVHaWib2},   we only have to consider the two following  cases. 
\begin{enumerate}
\item There exist integers $\nu>m\geq 0$ such that $\s^\nu(\alpha)=\s^m(\alpha)$ and $\mathcal{L} \in \C[\s ]$ such that $\mathcal{L}(\beta)=0$ for all $\begin{pmatrix}\alpha & \beta \\ 0 &1 \end{pmatrix}\in \sGal( L_{A}|K)(B)$ and all $\C$-$\s$-algebras $B$.
\item There exists an integer $\nu\geq 0$ such that $\s^\nu(\beta)=0$ for all $\begin{pmatrix}1 & \beta \\ 0 &1 \end{pmatrix}\in G_u(B)$ and all $\C$-$\s$-algebras $B$.
\end{enumerate}
Consider the first case. 
For all $\tau\in\sGal(L_{A}|K)(B)$, we have $\tau(u\otimes 1)= u\otimes \alpha $ with $\s^\nu(\alpha)=\s^m(\alpha)$.  
Then $g=\s^\nu(u)/ \s^m(u)$ satisfies $\tau(g\otimes 1)=g\otimes 1$ and
 we deduce from the Galois correspondence that $g \in K$. Since $g$ is a nonzero solution of 
$\phi(y)=q_{2}^{n(\nu-m)} y$ and a nonzero Puiseux series of the form  $\sum_{ \ell \in \frac{1}{r}\Z} g_\ell x^\ell$, we find that $q_{1}^\ell g_\ell= q_2^{n(\nu-m)} g_\ell$ for any $\ell \in \frac{1}{r}\Z$. Since $q_1$ and $q_2$ are multiplicatively independent, the coefficients $g_\ell$ must all vanish. A contradiction with the fact that $g$ is nonzero. \par 
In the second case, a similar computation and the Galois correspondence ensure that 
$\s^\nu(f) \in K\langle u\rangle_\s$. Recall that $u$ is $\s$-algebraic over $K$. Let $m \in \N$ be such that 
$\s^\nu(f)\in K(u,\s(u),\ldots,\s^m(u))$.
 Since $\phi(\s^k(u))=\s^k(a) \s^k(u)$ and $\phi(\s^\nu(f))=\s^\nu(a)\s^\nu(f)+\s^\nu(b)$, one can apply recursively Lemma \ref{claim:descentsolutionnonhomogen} to find a $K$-rational solution  of $\phi(y)=\s^\nu(a)y+\s^\nu(b)$. Since $K$ is inversive, there exist $g \in K$ such that $\phi(g)=ag+b$ and as in the $n=0$ case,  a complex number $d \in \C$ such that $f=g+d u$. 
Since $f \notin K$, the constant $d$ is nonzero. Then, $u=(f-g)/d$ belongs to  $F^*$. Let $k\in \Q$ be its valuation. Since $\phi(u)=au=cx^{n}u$, we find $k=k+n$, which is a contradiction. This ends the proof. 
  \end{proof}

%%%%%%%%%%%%%%%%%%%%%%%%%%%
 \subsection{Connected and irreducible Galois groups}
 As a next step, we consider the case where the Galois group is both connected and irreducible.  The goal of this subsection is to prove the following proposition. 
 
  \begin{prop}\label{prop: irreducible}
 Let $K,F$, and $(\phi,\s)$ be defined as in Cases \textbf{2S}, \textbf{2Q}, and \textbf{2M}.
 Let us assume that $n\geq 2$, and let $f\in F$ be a nonzero solution to Equation \eqref{eq: phi0}, that we are going to consider as a $\phi$-system $\phi(Y)=AY$, where $A$ is the corresponding companion matrix. If  
the Galois group of $\phi(Y)=AY$ over $K$ is
connected and irreducible, then $f$ is $\s$-transcendental over $K$.
 \end{prop}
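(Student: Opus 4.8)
The plan is to argue by contradiction: assume $f$ is $\s$-algebraic over $K$ and derive that the system $\phi(Y)=AY$ is compatible with a $\s$-difference system, which via Proposition~\ref{prop:compatibilityimpliesconstantcoeff} forces strong constancy that is incompatible with connectedness and irreducibility of the Galois group. First I would apply Proposition~\ref{propo:existenceparamfieldsolutionqdiff} to reduce to the case where the $\s$-Picard--Vessiot extension $L_A$ for $\phi(Y)=AY$ is a field and contains the coordinates of a vector solution built from $f$; by Remark~\ref{rem:groupofiteratesifconnected}, since $\G=\Gal(K_A|K)$ is connected, replacing $\phi$ by a power does not change the Galois group, and by Lemma~\ref{lem2} we may also replace $\s$ by a power, so the reduction is harmless. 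At this stage Lemma~\ref{lemma:relatalgclosedbasefieldsigmaintegralPPVgroup} tells us that the $\s$-Galois group $G=\sGal(L_A|K)$ is $\s$-integral and, consistently, $\G$ is connected.

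The heart of the argument is to translate the hypothesis ``$f$ is $\s$-algebraic'' into a statement about the $\s$-dimension of $G$. By Remark~\ref{rem:orderofthesystorderoftheequation} the coordinates of a fundamental solution generate $L_A$, and by Proposition~\ref{prop:transdeg} we have $\sdim(G)=\strdeg(L_A|K)$. If $f$ is $\s$-algebraic, then $\strdeg(L_A|K)=0$, whence $\sdim(G)=0$. Now I would invoke the Zariski-density statement of Proposition~\ref{propo:schematicalgebraicgaloisgroupcomparaison}: $G$ is a Zariski-dense $\s$-closed subgroup of $\G$. The decisive step is a group-theoretic dichotomy: a Zariski-dense $\s$-closed subgroup $G$ of the \emph{connected} algebraic group $\G$ having $\s$-dimension $0$ must already impose a nontrivial $\s$-difference relation among the entries of the fundamental matrix, equivalently it yields a $\s$-difference system $\s(Y)=\widetilde AY$ over $K$ compatible with $\phi(Y)=AY$. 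Concretely, $\sdim(G)=0$ means each entry of the fundamental matrix $U$ satisfies, after translation through the $\s$-Galois correspondence (Proposition~\ref{propo:spgaloiscorresptransdeg}), a linear $\s$-relation; organizing these into matrix form produces $\widetilde A\in\GL_n(K)$ with $\phi(\widetilde A)A=\s(A)\widetilde A$, i.e.\ the compatibility condition \eqref{eq:compatibilitycondition}.

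Once compatibility is established, Proposition~\ref{prop:compatibilityimpliesconstantcoeff} furnishes $T\in\GL_n(K)$ with $\phi(T)AT^{-1}\in\GL_n(\C)$. Thus after the gauge transformation $T$ the system $\phi(Y)=AY$ becomes one with constant coefficients, and a standard computation shows its Galois group $\G$ is then contained in a \emph{commutative} (in fact diagonalizable-up-to-finite, or at least proper) closed subgroup of $\GL_{n,\C}$. Since $\G$ is connected and irreducible with $n\ge 2$, this is the contradiction I am after: a connected commutative subgroup of $\GL_{n,\C}$ acts on $\C^n$ with a common eigenvector, hence reducibly, contradicting irreducibility via Lemma~\ref{lem: reducible1}. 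I expect the main obstacle to be the group-theoretic heart of the second paragraph, namely rigorously producing the compatible $\s$-system from the vanishing of $\sdim(G)$ together with Zariski density in the connected group $\G$; this is precisely where the new group-theoretic arguments alluded to in the introduction (Section~\ref{sec: main}) are needed, and care is required because $G$ is only a $\s$-closed, not closed, subgroup, so one must pass between the $\s$-coordinate ring $C\{G\}$ and the ordinary coordinate ring $C[\G]$ while keeping track of the compatibility identity.
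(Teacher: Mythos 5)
There is a genuine gap at the step you yourself flag as the heart of the argument, but the more serious problem occurs one step earlier. You assert that if $f$ is $\s$-algebraic over $K$ then $\strdeg(L_A|K)=0$, citing Remark~\ref{rem:orderofthesystorderoftheequation}. This does not follow: $L_A$ is $\s$-generated by \emph{all} $n^2$ entries of a fundamental matrix $U$, whereas the hypothesis only gives $\s$-algebraicity of the single column $(f,\phi(f),\dots,\phi^{n-1}(f))^{\top}$. Deducing that the remaining entries of $U$ are also $\s$-algebraic is precisely where the irreducibility hypothesis must be used, and it is the content of the paper's Proposition~\ref{prop: colonne}: one shows that the set $\pi(S_A|K)$ of $\s$-algebraic elements of the $\s$-Picard--Vessiot ring is a $K$-$\fs$-subalgebra \emph{stable under the action of the $\s$-Galois group in the functorial sense} (Corollary~\ref{prop1}, which rests on the entire Appendix, in particular Proposition~\ref{prop:sigmaalgebraicsigmaintegralGaloisgroup}); hence $W=V\cap\pi(S_A)^n$ is a nonzero subrepresentation of the solution space $V$, and irreducibility of $V$ (transferred from $\Gal(K_A|K)$ to $\sGal(L_A|K)$ via Zariski density, Lemma~\ref{lemma:irreduciblesigmarepresentationirreduciblezariskiclosurerep}) forces $W=V$. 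Only then does $\strdeg(L_A|K)=0$ follow. Your proposal skips this entirely.

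The second gap is the claimed dichotomy that $\sdim(G)=0$ together with Zariski density in the connected group $\G$ yields a compatible system $\s(Y)=\widetilde AY$ over $K$ with $\phi(\widetilde A)A=\s(A)\widetilde A$. The paper proves nothing this strong. The isomonodromy criterion it uses (Proposition~\ref{propo:caracgalspisomono}, relying on \cite[Theorem A.20]{DVHaWib1}) requires $\G$ to be a \emph{simple} algebraic group and delivers only $\s^d$-isomonodromy for some $d\geq 1$. To reach the simple case from a general connected non-solvable $\G$, the paper quotients by the radical, extracts a simple quotient, and passes to the associated subsystem via the second fundamental theorem of Galois theory (Proposition~\ref{prop:generalcase}); the solvable case is then excluded separately by Lie--Kolchin. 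So while your endgame (Sch\"afke--Singer forces constant coefficients, hence an abelian Galois group, contradicting irreducibility for $n\geq2$) is in the right spirit, the group-theoretic bridge you describe does not exist in the form you state it, and the argument as proposed cannot be completed without supplying both the representation-theoretic propagation step and the reduction to a simple quotient.
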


Throughout this subsection we adopt the following conventions. As previously, 
$K$ is one of the $\f\s$-fields of Cases \textbf{2S}, \textbf{2Q}, and \textbf{2M}. We let $L_A$ denote 
a $\s$-Picard-Vessiot extension for $\phi(Y)=AY$, where $A\in\GL_n(K)$. We define $K_A$ as in Proposition~\ref{propo:schematicalgebraicgaloisgroupcomparaison}.
   As we will see, 
Proposition \ref{prop: irreducible} will be deduced from the following more general statement.
\begin{prop}\label{prop: colonne}
Let $A\in \GL_{n}(K)$ with $n\geq 2$.  Assume that $L_A$ is a field and the Galois group $\Gal(K_A|K)$ is irreducible.
	Then every nonzero solution $u\in L_A^n$ of the system $\f(Y)=AY$ contains at least one coordinate that is 
	$\s$-transcendental over $K$. 
\end{prop}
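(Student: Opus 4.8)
The plan is to argue by contradiction. Assume that $u=(u_1,\dots,u_n)^\top\in L_A^n$ is a nonzero solution of $\f(Y)=AY$ all of whose coordinates are $\s$-algebraic over $K$, and fix a fundamental matrix $U\in\GL_n(L_A)$, so that the solution space $\mathrm{Sol}=\{y\in L_A^n\mid \f(y)=Ay\}$ is an $n$-dimensional $C$-vector space ($C=L_A^\f=K^\f$) with basis the columns of $U$; write $u=Uc$ with $c\in C^n\setminus\{0\}$. Since $K$ is one of the fields of Cases \textbf{2S}, \textbf{2Q}, \textbf{2M}, it satisfies Condition $\cH$ by Lemma \ref{lemma: base fields satisfy H}; as $L_A$ is a field, Lemma \ref{lem:relativalgclosedbasefieldPPVfield} shows that $K$ is relatively algebraically closed in $L_A$, and Lemma \ref{lemma:relatalgclosedbasefieldsigmaintegralPPVgroup} shows that $G:=\sGal(L_A|K)$ is $\s$-integral and that the Galois group $\G=\Gal(K_A|K)$ is connected. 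By Proposition \ref{propo:schematicalgebraicgaloisgroupcomparaison}, $G$ is a Zariski dense $\s$-closed subgroup of $\G$, and by Proposition \ref{prop:transdeg} we have $\sdim(G)=\strdeg(L_A|K)$.

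The first step is to propagate $\s$-algebraicity across the whole solution space. Consider the $C$-linear span $W\subseteq\mathrm{Sol}$ of all solutions whose coordinates are $\s$-algebraic over $K$; it is a nonzero $C$-subspace of $C^n$ (via $U$), because the $\s$-algebraic elements of $L_A$ over $K$ form an intermediate $\s$-field and $C\subseteq K$. The key point is that $W$ is $G$-stable: every $\tau\in G(B)$ is a $\f\s$-automorphism fixing $K$, so its action $[\tau]_U$ preserves the property of being $\s$-algebraic over $K$, whence $[\tau]_U(W\otimes_C B)\subseteq W\otimes_C B$. Since the stabilizer of $W$ in $\GL_{n,C}$ is Zariski closed and contains the Zariski dense subgroup $G$, it contains $\G$; thus $W$ is a $\G$-subrepresentation of $C^n$. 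As $\G$ is irreducible and $W\neq 0$, we conclude $W=\mathrm{Sol}$, i.e. \emph{every} solution of $\f(Y)=AY$ is $\s$-algebraic over $K$. In particular $L_A=K\langle U\rangle_\s$ is $\s$-algebraic over $K$, so $\strdeg(L_A|K)=0$ and $\sdim(G)=0$.

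It remains to reach a contradiction from $\sdim(G)=0$ together with the irreducibility of $\G$, and this is the main obstacle. The plan is to show that $\sdim(G)=0$ forces the system $\f(Y)=AY$ to be \emph{compatible}, in the sense of \eqref{eq:compatibilitycondition}, with a linear $\s$-system with coefficients in $K$: morally, $\s$-dimension zero is an isomonodromy condition making $\widetilde A:=\s(U)U^{-1}$ descend to $\GL_n(K)$, the rationality being extracted from the relative algebraic closedness of $K$ in $L_A$ and from the structure of $\s$-integral $\s$-groups of $\s$-dimension zero developed in the appendix. Granting this, Proposition \ref{prop:compatibilityimpliesconstantcoeff} produces $T\in\GL_n(K)$ with $\f(T)AT^{-1}\in\GL_n(\C)$, so after the gauge transformation $Y\mapsto TY$ we may assume that $A\in\GL_n(\C)$ is constant. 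But the Galois group of a constant coefficient system is the Zariski closure of the cyclic group generated by $A$, hence commutative; since $n\geq2$ and $\C$ is algebraically closed, a common eigenvector of this group yields a one-dimensional $\G$-invariant subspace of $C^n$, so $\G$ is reducible by Lemma \ref{lem: reducible1}, contradicting the hypothesis. As an alternative ending, once reduced to constant coefficients one can exhibit a nontrivial linear $\s$-equation over $K$ satisfied by each coordinate of $u$ and invoke Theorem \ref{lem:linearlyclosedfieldextension} to force $u\in K^n$, again contradicting the irreducibility of $\G$ for $n\geq2$.
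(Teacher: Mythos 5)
Your first half follows the paper's own strategy: you form the subspace $W$ of solutions with $\s$-algebraic coordinates, use stability under the $\s$-Galois group together with Zariski density and irreducibility of $\G=\Gal(K_A|K)$ to conclude $W$ is the whole solution space, and deduce $\strdeg(L_A|K)=\sdim(G)=0$. One caveat: the assertion that each $\tau\in G(B)$ ``preserves the property of being $\s$-algebraic over $K$'' is only obvious for $B=C$, i.e.\ for honest automorphisms of $S_A$. For a general $C$-$\s$-algebra $B$ one must show that $\tau$ maps $\pi(S_A|K)\otimes_C B$ into $\pi(S_A|K)\otimes_C B$, and the paper flags this as ``much less obvious'': it is the content of Corollary \ref{prop1} and Proposition \ref{prop:sigmaalgebraicsigmaintegralGaloisgroup}, whose proof requires the $\s$-boundedness machinery and the descent argument of Lemma \ref{lem: tau and base extension}. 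You should at least cite this rather than treat it as immediate.

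The genuine gap is in your endgame. You want to deduce from $\sdim(G)=0$ that $\f(Y)=AY$ is $\s^d$-isomonodromic, but you only say this holds ``morally'' and then proceed ``granting this.'' That implication is not available for a general irreducible connected $\G$, and it is false at the level of Zariski dense $\s$-closed subgroups when $\G$ is not simple: for instance, inside $\GL_2$ (which acts irreducibly but is not semisimple) the subgroup defined by $\s(g)=g\det(g)$ is Zariski dense of $\s$-dimension zero, yet is not of the conjugation form $\s^d(g)=hgh^{-1}$ that yields isomonodromy via \cite[Theorem 2.55]{OvWib}. The classification underlying Proposition \ref{propo:caracgalspisomono} (namely \cite[Theorem A.20]{DVHaWib1}) requires $\G$ to be \emph{simple}. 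The paper bridges exactly this gap in Proposition \ref{prop:generalcase}: if $\G$ is connected and not solvable, one passes to a simple quotient $\G/\mathcal{N}$, realizes it as the Galois group of a subsystem via the second fundamental theorem (Proposition \ref{cor:Galoisgroupconnected}), and applies the isomonodromy dichotomy there (Proposition \ref{prop:almostsimplecase}) to force $\strdeg(L_A|K)>0$; the remaining case, $\G$ connected solvable, is killed by Lie--Kolchin since a solvable connected group cannot act irreducibly on a space of dimension $n\geq 2$. Without this reduction to a simple quotient (or the separate treatment of the solvable case), your argument does not close.
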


\begin{proof}[Proof of Proposition \ref{prop: irreducible}]
We argue by contradiction, assuming that $f$ is $\s$-algebraic over $K$. 
With $\phi\s=\s\phi$, we deduce that all coordinates of the vector $(f,\dots, \s^{n-1} (f))^{\top}$ are also $\s$-algebraic over $K$. 
Since $(f,\dots,\rho^{n-1}(f))^{\top}\in F^n$ is nonzero, 
Proposition \ref{propo:existenceparamfieldsolutionqdiff} ensures the existence of positive integers $r,s$ 
and a  $\s^{s}$-Picard-Vessiot  extension $L_{A_{[r]}}$ for $\phi^{r}(Y)=A_{[r]}Y$ over $K$  that is a field,
such that  the vector $(f,\dots,\rho^{n-1}(f))^{\top}$ is 
the first column of a fundamental matrix $U$. Furthermore,  Remark \ref{rem:groupofiteratesifconnected}  ensures that 
the Galois group of $\phi^r(Y)=A_{[r]}(Y)$ over $K$ 
is equal to $\Gal(K_{A}|K)$, for the latter is connected. By Lemma \ref{lem2}, all coordinates of the vector $(f,\dots, \s^{n-1} (f))^{\top}$ are also $\s^{s}$-algebraic over $K$. 
     Thus, 
Proposition~\ref{prop: colonne} applies with $\phi$ replaced by $\phi^r$, and provides a contradiction. 
\end{proof}
 
Before proving Proposition~\ref{prop: colonne}, let us first recall some terminology from the theory of linear algebraic groups. 
The \emph{radical} of a linear algebraic group (over an algebraically closed field of characteristic zero) is the largest connected solvable normal closed subgroup. 
A linear algebraic group is \emph{semisimple} if it is connected and its radical is trivial. 
It is \emph{almost-simple} if it is nontrivial, semisimple and every proper normal closed subgroup is finite. 
A linear algebraic group is \emph{simple} if it is nontrivial, semisimple an every proper normal closed subgroup 
is trivial. In particular, a simple group cannot be abelian. 

 Let $A\in\GL_n(K)$ and $d\in \N^*$. We say that the system $\f(Y)=AY$ is \emph{$\s^d$-isomonodromic} if there exists $\widetilde{A}\in\GL_n(K)$ 
 such that $\f(Y)=AY$ and $\s^d(Y)=\widetilde{A}Y$ are compatible, i.e. $\f(\widetilde{A})A=\s^d(A)\widetilde{A}$.
 The following proposition is analogous to \cite[Theorem 6.4]{DVHaWib2} since, by Lemma~\ref{lem:relativalgclosedbasefieldPPVfield}, the base field $K$ is relatively algebraically 
 closed in $L_A$. 
  
 \begin{prop}\label{propo:caracgalspisomono}
Let $A\in \GL_{n}(K)$ with $n\geq 2$.  Assume that $L_A$ is a field and $\Gal(K_A|K)$ is a simple linear algebraic group. 
  If $\sGal(L_A|K)$ is a proper $\s$-closed subgroup of $\Gal(K_A|K)$, 
  then $\f(Y)=AY$ is $\s^d$-isomonodromic for some $d\geq 1$. 
 \end{prop}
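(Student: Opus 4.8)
The plan is to reduce Proposition~\ref{propo:caracgalspisomono} to a group-theoretic classification of proper Zariski dense $\s$-closed subgroups of a simple algebraic group, and then to translate the group-theoretic output back into the compatibility relation \eqref{eq:compatibilitycondition}. The bridge between the two is a Galois-theoretic criterion for $\s^d$-isomonodromy that I would establish first.

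\textbf{A criterion for isomonodromy.} Fix a fundamental matrix $U\in\GL_n(S_A)$ with $\phi(U)=AU$, so that $S_A=K\{U,\det(U)^{-1}\}_\s$. For an integer $d\geq 1$ set $V_d:=\s^d(U)U^{-1}\in\GL_n(S_A)$. A direct computation using $\phi\s=\s\phi$ and $\phi(U)=AU$ gives $\phi(V_d)A=\s^d(A)V_d$, which is precisely the relation \eqref{eq:compatibilitycondition} for the pair $(A,V_d)$ with $\s$ replaced by $\s^d$. Hence $\f(Y)=AY$ is $\s^d$-isomonodromic as soon as $V_d\in\GL_n(K)$, the witness being $\widetilde A=V_d$. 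By Proposition~\ref{propo:spgaloiscorresptransdeg}, $V_d\in\GL_n(K)$ exactly when $V_d$ is fixed by the $\s$-Galois group $G=\sGal(L_A|K)$; and from $\tau(U\otimes 1)=(U\otimes 1)[\tau]_U$ one computes $\tau(V_d\otimes 1)=(\s^d(U)\otimes 1)\,\s^d([\tau]_U)\,[\tau]_U^{-1}\,(U^{-1}\otimes 1)$, so that $V_d$ is $\s$-Galois invariant if and only if $\s^d([\tau]_U)=[\tau]_U$ for every $\C$-$\s$-algebra $B$ and every $\tau\in G(B)$. Viewing $G\subset\GL_{n,\C}$, this says the system is $\s^d$-isomonodromic whenever $G$ is contained in the $\s$-closed subgroup $\{g\ |\ \s^d(g)=g\}$. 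I would also record that replacing $U$ by $Uc^{-1}$ for a constant $c\in\GL_n(\C)$ keeps $U$ a fundamental matrix for $\f(Y)=AY$ (legitimate because $\s$ acts trivially on $\C=K^\phi=\mathbb C$ in all three cases) and conjugates $G$ to $cGc^{-1}$; thus it suffices to place $G$ inside $\{\s^d=\mathrm{id}\}$ after an inner automorphism of $\G$.

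\textbf{Invoking the structure theory.} Since $K$ satisfies Condition $\cH$, Lemma~\ref{lemma:relatalgclosedbasefieldsigmaintegralPPVgroup} shows that $G$ is $\s$-integral and that $\G=\Gal(K_A|K)$ is connected, while Lemma~\ref{lem:relativalgclosedbasefieldPPVfield} shows $K$ is relatively algebraically closed in $L_A$; these are the hypotheses under which the classification operates. By Proposition~\ref{propo:schematicalgebraicgaloisgroupcomparaison}, $G$ is a Zariski dense $\s$-closed subgroup of $\G$, and by hypothesis it is proper. The heart of the argument is then the difference-algebraic analog of Cassidy's theorem on Kolchin closed subgroups of simple algebraic groups: a proper Zariski dense, $\s$-integral, $\s$-closed subgroup of a simple algebraic group $\G$ must, after conjugation by an element of $\G(\C)$, be contained in $\{g\in\G\ |\ \s^d(g)=g\}$ for some integer $d\geq 1$. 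This is the statement about $\s$-algebraic groups that I would isolate and prove using the representation-theoretic material of the appendix. Feeding its conclusion into the criterion of the previous paragraph yields $V_d\in\GL_n(K)$ for a suitable choice of fundamental matrix, hence $\s^d$-isomonodromy, closing the proof.

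\textbf{The main obstacle.} The genuinely hard point is the classification step, namely controlling proper Zariski dense $\s$-closed subgroups of a simple algebraic group. In the differential setting of \cite[Theorem 6.4]{DVHaWib2} the field of constants of the parametric derivation is automatically algebraically closed, which makes the Cassidy dichotomy directly available; here the parametric operator $\s$ is an endomorphism, so one must keep track of $\s$-fixed subfields and, more seriously, rule out (or absorb into a higher power $\s^d$ together with an inner conjugation) possible twists of the form $\s^d(g)=\alpha(g)$ by an algebraic automorphism $\alpha$ of $\G$ compatible with the $\s$-stability of $G$. Showing that such twists can be reduced to the trivial one --- so that the output is honest $\s^d$-isomonodromy rather than a twisted variant --- and that the conjugating element can be taken constant (which is where $\C$ being $\s$-constant and the relative algebraic closedness of Lemma~\ref{lem:relativalgclosedbasefieldPPVfield} enter decisively) is the technical crux, and is exactly the reason the complementary results on difference algebraic groups in the appendix are required.
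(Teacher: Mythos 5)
Your strategy is essentially the paper's: translate $\s^d$-isomonodromy into a condition on the image of $\sGal(L_A|K)$ in $\GL_{n,\C}$, then invoke a classification of proper Zariski dense $\s$-closed ($\s$-integral) subgroups of a simple algebraic group. The paper does exactly this, citing \cite[Theorem 2.55]{OvWib} for the translation and \cite[Theorem A.20]{DVHaWib1} for the classification (whose output is: there exist $h\in\GL_n(\C)$ and $d\geq 1$ with $\s^d(\tau)=h\tau h^{-1}$ for all $\tau\in G(B)$); you correctly identify the classification as the crux, and your computation of $\tau(V_d\otimes 1)$ and of the compatibility relation $\f(V_d)A=\s^d(A)V_d$ is right.

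One step does not work as written: you propose to absorb the inner twist by replacing $U$ with $Uc^{-1}$ for a constant $c$, "so that it suffices to place $G$ inside $\{\s^d=\mathrm{id}\}$ after an inner automorphism of $\G$." Since $\s$ fixes $\C$ pointwise, conjugating the representation by a $\s$-constant matrix $c$ changes $[\tau]_U$ to $c[\tau]_Uc^{-1}$ and hence changes $\s^d([\tau]_U)-[\tau]_U=0$ into the equivalent condition $c\bigl(\s^d([\tau]_U)-[\tau]_U\bigr)c^{-1}=0$; the reduction is vacuous, and the twisted condition $\s^d(\tau)=h\tau h^{-1}$ with $h\neq 1$ is genuinely not of the untwisted form after any constant conjugation. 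The repair is immediate and is what the cited result does: take $V:=\s^d(U)\,h\,U^{-1}$ instead of $\s^d(U)U^{-1}$; then $\tau(V\otimes 1)=(\s^d(U)\otimes 1)\s^d([\tau]_U)(h\otimes 1)[\tau]_U^{-1}(U^{-1}\otimes 1)=V\otimes 1$ precisely under the twisted condition, so $V\in\GL_n(K)$ by Proposition \ref{propo:spgaloiscorresptransdeg}, and $\f(V)A=\s^d(A)V$ still holds because $\f(h)=h$. With that modification your argument coincides with the paper's proof.
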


\begin{proof}
Let us fix a fundamental matrix $U\in \GL_n(L_A)$ to obtain embeddings of 
$G=\sGal(L_A|K)$ and $\G=\Gal(K_A|K)$ into $\GL_n$. By Proposition~\ref{propo:schematicalgebraicgaloisgroupcomparaison}, $G$ is Zariski dense in $\G$.
We infer from Lemma \ref{lem:relativalgclosedbasefieldPPVfield} that $G$ is $\s$-integral and so, in particular, 
$\s$-reduced, i.e. $\s\colon \C\{G\}\to \C\{G\}$ is injective. Because $K$ satisfies Condition $\cH$, all the assumptions of 
\cite[Theorem A.20]{DVHaWib1} are satisfied. It is shown in the proof of that theorem 
(see the fist displayed formula on page 114), that there exist $h\in \GL_n(\C)$ and 
$d\geq 1$ such that for every $\C$-$\s$-algebra $B$ and every $\tau\in G(B)\leq \GL_n(B)$ 
we have $\s^d(\tau)=h\tau h^{-1}$. It thus follows from \cite[Theorem 2.55]{OvWib} that $\f(Y)=AY$ is 
$\s^d$-isomonodromic.
\end{proof} 

We refer to Example \ref{ex1} for the definition of $[\s]_\C\Gal(K_A|K)$ that is used in the following proposition.

\begin{prop}\label{prop:almostsimplecase}
Let $A\in \GL_{n}(K)$ with $n\geq 2$.  Assume that $L_A$ is a field. If $\Gal(K_A|K)$ is a simple linear algebraic group,  
 then $\sGal(L_A|K)=[\s]_\C\Gal(K_A|K)$. In particular, $\strdeg(L_A|K)>0$.
\end{prop}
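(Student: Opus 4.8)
The plan is to argue by contradiction: if the $\s$-Galois group were strictly smaller than $[\s]_\C\G$, then Proposition~\ref{propo:caracgalspisomono} would force an isomonodromy relation, and the Sch\"afke--Singer compatibility theorem would turn $\f(Y)=AY$ into a constant-coefficient system; but such systems have commutative Galois groups, which is incompatible with the simplicity of $\G$. Concretely, set $\G=\Gal(K_A|K)$ and $G=\sGal(L_A|K)$. By Proposition~\ref{propo:schematicalgebraicgaloisgroupcomparaison}, $G$ is a Zariski dense $\s$-closed subgroup of $\G$; in particular $G\subseteq[\s]_\C\G$. Suppose, aiming at a contradiction, that $G\neq[\s]_\C\G$. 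Then $G$ is a \emph{proper} $\s$-closed subgroup of $\G$, and since $L_A$ is a field and $\G$ is simple, Proposition~\ref{propo:caracgalspisomono} produces an integer $d\geq1$ and a matrix $\widetilde{A}\in\GL_n(K)$ such that $\f(Y)=AY$ and $\s^d(Y)=\widetilde{A}Y$ are compatible, i.e. $\f(\widetilde{A})A=\s^d(A)\widetilde{A}$.

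I would then invoke Proposition~\ref{prop:compatibilityimpliesconstantcoeff} for the pair $(\f,\s^d)$. The point is that this pair again falls under one of Cases \textbf{2S}, \textbf{2Q}, or \textbf{2M}: passing from $\s$ to $\s^d$ replaces $h_2$ by $dh_2$, $q_2$ by $q_2^d$, or $p_2$ by $p_2^d$, respectively, and in each situation the required independence of the parameters is preserved ($h_1/(dh_2)\notin\Q$ since $h_1/h_2\notin\Q$; $q_1$ and $q_2^d$ multiplicatively independent, with the mild condition in Case \textbf{2Q} inherited because $q_2^d$ has all Galois conjugates of modulus one exactly when $q_2$ does; $p_1$ and $p_2^d$ multiplicatively independent). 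Proposition~\ref{prop:compatibilityimpliesconstantcoeff} then yields $T\in\GL_n(K)$ with $B:=\f(T)AT^{-1}\in\GL_n(\C)$, so that $\f(Y)=AY$ is equivalent over $K$ to the constant-coefficient system $\f(Y)=BY$.

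Because the gauge transformation $T$ induces an isomorphism between the Picard--Vessiot rings of the two systems, their Galois groups are isomorphic, so $\G$ is isomorphic to the Galois group of $\f(Y)=BY$. The latter is commutative: the entries of a fundamental matrix $U$ with $\f(U)=BU$ satisfy only the relations coming from the cyclic group generated by $B$, so the Galois group is contained in its Zariski closure $\overline{\langle B\rangle}$, which is abelian (this is classical, see \cite{VdPdifference}). Thus $\G$ would be abelian, contradicting the fact that a simple linear algebraic group is nonabelian. This contradiction establishes $G=[\s]_\C\G$.

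For the final assertion, Proposition~\ref{prop:transdeg} gives $\strdeg(L_A|K)=\sdim(G)=\sdim([\s]_\C\G)$; since $\G$ is simple it is nontrivial of positive dimension, and the description of $[\s]_\C\G$ from Example~\ref{ex1} together with Definition~\ref{defi:sdim} yields $\sdim([\s]_\C\G)=\dim\G>0$, whence $\strdeg(L_A|K)>0$. The bulk of the work is already carried by Proposition~\ref{propo:caracgalspisomono} and the Sch\"afke--Singer theorem, so the main obstacle here is conceptual: one must recognise that the hypothesis ``$G$ is a proper $\s$-closed subgroup of $\G$'' in Proposition~\ref{propo:caracgalspisomono} is precisely the negation of the desired equality (via the standing inclusion $G\subseteq[\s]_\C\G$), and that reduction to a constant-coefficient system forces a commutative Galois group, ruling out simplicity.
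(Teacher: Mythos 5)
Your proof is correct and follows essentially the same route as the paper: contradiction via Proposition~\ref{propo:caracgalspisomono}, reduction to a constant-coefficient system by Proposition~\ref{prop:compatibilityimpliesconstantcoeff}, abelianness of the resulting Galois group contradicting simplicity, and then the dimension count $\strdeg(L_A|K)=\sdim(G)=\dim(\G)>0$. Your explicit check that the pair $(\f,\s^d)$ still falls under Cases \textbf{2S}, \textbf{2Q}, \textbf{2M} is a detail the paper leaves implicit, and where you sketch why a constant-coefficient system has abelian Galois group the paper simply cites Arreche--Singer; neither difference changes the argument.
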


\begin{proof} 
We argue by contradiction, assuming that $G=\sGal(L_A|K)$ is properly contained in $[\s]_\C\G$, where $\G=\Gal(K_A|K)$. 
By Proposition 
\ref{propo:caracgalspisomono},  we obtain the existence of a matrix $\widetilde{A} \in \GL_n(K)$ and an integer $d\geq 1$ 
such that the systems ${\phi(Y)=AY}$ and $\s^d(Y)=\widetilde{A}Y$ are compatible. By Proposition \ref{prop:compatibilityimpliesconstantcoeff}, we deduce that the system $\phi(Y)=AY$ is equivalent over $K$  
to  a system $\phi (Y)=A_{1}Y$, where  $A_{1} \in \GL_n(\C)$. By \cite[Lemma~2.1 and Remark 2.2]{ArrecheSinger}, 
the Galois group of such a system is always abelian, providing a contradiction with the assumption that 
$\G$ is simple. Thus $G=[\s]_\C\G$.
 
Now, since $\G$ is a simple linear algebraic group, it cannot be finite and ${\dim(\G)>0}$.  Using Proposition~\ref{prop:transdeg} and Example \ref{ex: dimensions agree}, we thus obtain that 
$\strdeg(L_A|K)=\sdim(G)=\dim(\G)>0$.
 \end{proof}

\begin{prop}\label{prop:generalcase}
Let $A\in \GL_{n}(K)$ with $n\geq 2$.  Assume that $L_A$ is a field.     Then,  either $\Gal(K_A|K)$ is a connected solvable linear algebraic group 
       or $\strdeg(L_A|K)>0$.
\end{prop}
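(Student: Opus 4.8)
The plan is to first analyze the structure of the Galois group $\G:=\Gal(K_A|K)$ and reduce to the case treated in Proposition~\ref{prop:almostsimplecase}. Since $L_A$ is a field and $K$ satisfies Condition~$\cH$ (Lemma~\ref{lemma: base fields satisfy H}), Lemma~\ref{lemma:relatalgclosedbasefieldsigmaintegralPPVgroup} ensures that $\G$ is connected. Thus the dichotomy to establish is the following: if $\G$ is solvable, then it is connected solvable and we land in the first alternative; otherwise $\G$ is a connected non-solvable linear algebraic group, and the goal becomes to prove $\strdeg(L_A|K)>0$. From now on I would assume $\G$ is not solvable.

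The key structural input is that a connected non-solvable linear algebraic group over an algebraically closed field of characteristic zero admits a simple quotient. Indeed, the radical $R(\G)$ is a \emph{proper} connected solvable normal closed subgroup, so $\G/R(\G)$ is a nontrivial semisimple group; passing to its adjoint quotient $\big(\G/R(\G)\big)\big/Z\big(\G/R(\G)\big)$, which is a direct product of simple groups, and composing with the projection onto one of these factors, one obtains a surjective morphism $\G\twoheadrightarrow S$ onto a simple linear algebraic group (see, e.g., \cite{Milne:algebraicGroups}). Let $\mathcal{N}$ denote its kernel, a normal closed subgroup of $\G$ with $\G/\mathcal{N}\cong S$ simple, hence nonabelian.

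Next I would descend to a subsystem governed by $S$. Because $K_A$ is a field, Proposition~\ref{cor:Galoisgroupconnected} provides a matrix $A'\in\GL_{n'}(K)$ such that $K_A^{\mathcal{N}}$ is a Picard--Vessiot extension for $\phi(Y)=A'Y$, and by the Galois correspondence its Galois group is $\G/\mathcal{N}\cong S$, which is simple; the faithful representation $S\hookrightarrow\GL_{n'}$ arising from this Picard--Vessiot extension forces $n'\geq 2$, since $S$ is nonabelian while $\GL_1$ is abelian. Choosing a fundamental matrix $V\in\GL_{n'}(K_A^{\mathcal{N}})$, we have $K_A^{\mathcal{N}}=K(V)\subseteq L_A$, and I would set $L_{A'}:=K\langle V\rangle_\s$. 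Since $L_A$ is a $\s$-field, $L_{A'}$ is a subfield of $L_A$, and as $L_{A'}^\phi\subseteq L_A^\phi=K^\phi$ it is a $\s$-Picard--Vessiot extension for $\phi(Y)=A'Y$ which is again a field, with associated Picard--Vessiot extension $K(V)=K_A^{\mathcal{N}}$ of simple Galois group.

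It then remains to apply Proposition~\ref{prop:almostsimplecase} to the system $\phi(Y)=A'Y$, which yields $\strdeg(L_{A'}|K)>0$. By the additivity of the $\s$-transcendence degree in the tower $K\subseteq L_{A'}\subseteq L_A$, we conclude $\strdeg(L_A|K)\geq\strdeg(L_{A'}|K)>0$, completing the argument. The main obstacle I anticipate is the passage to the simple quotient via the radical and adjoint quotient, together with the verification that the associated $\s$-Picard--Vessiot extension $L_{A'}$ can be realized as a subfield of $L_A$ so that monotonicity of $\strdeg$ applies; once these points are in place, Proposition~\ref{prop:almostsimplecase} finishes the proof.
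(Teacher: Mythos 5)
Your proof is correct and follows essentially the same route as the paper's: connectedness of $\Gal(K_A|K)$ via Lemma~\ref{lemma:relatalgclosedbasefieldsigmaintegralPPVgroup}, passage to a simple quotient of the non-solvable connected group through its radical, descent to a subsystem $\phi(Y)=A'Y$ via Proposition~\ref{cor:Galoisgroupconnected}, and conclusion by Proposition~\ref{prop:almostsimplecase} together with monotonicity of the $\s$-transcendence degree. The only (welcome) refinement is your explicit verification that $n'\geq 2$ because a simple group is nonabelian and thus cannot embed as a closed subgroup of $\GL_1$, a hypothesis of Proposition~\ref{prop:almostsimplecase} that the paper's proof leaves implicit.
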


\begin{proof}
	 Lemma \ref{lemma:relatalgclosedbasefieldsigmaintegralPPVgroup} ensures  that $\G=\Gal(K_A|K)$ is connected.
	Let us assume that $\G$ is not solvable and let us show that $\strdeg(L_A|K)>0$.  
	As $\G$ is not solvable, the radical $R(\G)$ of $\G$ is a proper closed normal subgroup of $\G$ and $\G/R(\G)$ is a nontrivial   
	semisimple linear algebraic group. It follows from the structure theory of semisimple linear algebraic groups, that any 
	 nontrivial semisimple linear algebraic groups is an almost-direct product of (finitely many) almost-simple groups (\cite[Theorem 21.51]{Milne:algebraicGroups}).  
	 It follows that such a 
	 group has a simple quotient. Thus, there exists a normal closed subgroup $\mathcal{N}$ of $\G$ 
	such that $\G/\mathcal{N}$ is a simple linear algebraic group. By the second fundamental theorem of Galois 
	theory (Proposition \ref{cor:Galoisgroupconnected}) there exist an integer $n'$, $1\leq n'\leq n$ and 
	$A'\in\GL_{n'}(K)$ such that $(K_A)^{\mathcal{N}}$ is a Picard-Vessiot extension for $\f(Y)=A'Y$ over $K$, 
	with Galois group $\G/\mathcal{N}$. Given a fundamental matrix $U'\in\GL_{n'}((K_A)^\mathcal{N})$ of $\f(Y)=A'Y$, we have that
	$L_{A'}=K\langle U'\rangle_\s\subseteq L_A$ is a $\s$-Picard-Vessiot extension for $\f(Y)=A'Y$ over $K$.  
	Since the Galois group of $\f(Y)=A'Y$ is simple, it follows from 
	Proposition~\ref{prop:almostsimplecase} that 
	$\strdeg(L_{A'}|K)>0$. Hence we also have $\strdeg(L_A|K)>0$, as desired. 
\end{proof}

\begin{proof}[Proof of Proposition \ref{prop: colonne}]
	Let $S_A\subset L_A$ denote the $\s$-Picard-Vessiot ring. As in Appendix \ref{subsec:sigmaalgebraic}, 
	we let $\pi(S_A)=\pi(S_A|K)$ denote the set of all elements of $S_A$ that are $\s$-algebraic over $K$. 
	Since $L_{A}$ is a field, $S_A$ is a domain. By Corollary~\ref{prop1}, $\pi(S_A)$ is a $K$-$\sphi$-subalgebra 
	of $S_A$.         Let us assume by contradiction that all coordinates of $u$ are $\s$-algebraic over $K$.   Then, $u\in \pi(S_A)^n$. 
         Let $V\subseteq (S_A)^n$ denote the $n$-dimensional $\C$-vector space of all solution of $\f(Y)=AY$ in $S_A^n$. 
	Since, by assumption,  $u\in V\cap \pi(S_A)^n=:W$, we see that $W$ is  nonzero subspace of $V$. 
	By Corollary \ref{prop1}, $\pi(S_A)$ is stable under the action of $\sGal(L_A|K)$ and so the same holds for $W$.
	Since the representation $V$ of $\Gal(K_A|K)$ is irreducible, 
	Lemma \ref{lemma:irreduciblesigmarepresentationirreduciblezariskiclosurerep} implies that the representation 
	$V$ of $\sGal(L_A|K)$ is irreducible too. Thus $W=V$ and so $V\subseteq \pi(S_A)^n$. Hence, all entries of 
	all elements of $V$ are $\s$-algebraic over $K$. Since $L_A$ is $\s$-generated over $K$ by all these entries, 
	it follows that $\strdeg(L_A|K)=0$. Thus, by  Proposition~\ref{prop:generalcase}, $\Gal(K_A|K)$ is a connected 
	solvable linear algebraic group. However, by the Lie-Kolchin Theorem, 
	see \cite[Theorem~16.30]{Milne:algebraicGroups}, such a group stabilizes a line in any nonzero representation. 
	Since $n\geq 2$, it cannot act irreducibly, providing a contradiction. 
\end{proof}

%%%%%%%%%%%%%%%%%%%%%%
\subsection{The general case}\label{sec33}

We are now ready to prove Theorem \ref{thm: sigmatranscendence}.  

\begin{proof}[Proof of Theorem \ref{thm: sigmatranscendence}]  
          We argue by induction on $n$. More precisely, our induction assumption reads as follows. 
         \begin{itemize}\item[($\mbox{H}_n$)] 
              For all positive integers $k$ and all $f\in F$ that is a 
              solution to a linear $\phi^{k}$-difference equation of order at most $n$  with coefficients in $K$, 
              we have either $f$ is $\sigma$-transcendental over $K$ or  $f\in K$. 
         \end{itemize}
    Proposition \ref{propo:shift_shift_algrank1} with $b=0$ implies that $(\mbox{H}_{1})$ hold true. 
    Let $n\geq 2$ and let us assume  $(\mbox{H}_{n-1})$.  
    Let $f\in F$ be a solution to a linear $\phi^{k}$-difference equation of order $n$ with coefficients in $K$.  
    Without any loss of generality, we can assume that $f\neq 0$ and $k=1$. 
    Considering the companion matrix $A$ associated with this equation,  
    Proposition \ref{propo:existenceparamfieldsolutionqdiff} ensures the existence of  
    positive integers $r$ and $s$ such that the following properties hold.  

\begin{itemize}
\item[(a)] The vector $(f,\phi(f),\dots, \phi^{n-1} (f))^{\top}$ is a solution to the system $\phi^{r} (Y)=A_{[r]}Y$.  

\item[(b)] There exists a $(\phi^{r},\sigma^s)$-Picard-Vessiot extension $L_A$ for $\phi^r (Y)=A_{[r]}Y$ 
               that is a field and  such that the vector $(f,\phi(f),\dots, \phi^{n-1} (f))^{\top}$  is the first column 
               of a fundamental matrix $U\in\GL_{n}(L_A)$. 
  
\item[(c)]  The Galois group $\G$ of the system $\phi^{r} (Y)=A_{[r]}Y$ over $K$ is connected.
\end{itemize}
By Lemma \ref{lem2},  without loss of generality, up to  replace $\s$ by some  powers, we may reduce to the case where $s=1$. 
    Let us first assume that $\G$ is irreducible. Since $L_A$ is a field, then Proposition \ref{prop: irreducible} 
    shows that $f$ is $\s$-transcendental. Hence $(\mbox{H}_{n})$ holds. From now on, 
    we  assume that  $\G$ is reducible. 
    Furthermore, we assume that $f$ is $\sigma$-algebraic over $K$. Thus, it remains to prove that $f\in K$.  
    Without loss of generality, we can assume that $r=1$. 
 
    By Lemma \ref{lem: reducible1}, there exists a gauge transformation ${T=(t_{i,j})\in \GL_{n}(K)}$ such 
    that 
    $$
    \phi (T)AT^{-1}=\begin{pmatrix}
    A_1& A_{1,2} \\ 
    0 & A_2  
    \end{pmatrix}\, ,
    $$  
    where $A_{i}\in \GL_{n_i}(K)$, $n_1+n_2=n$, and $n_2<n$. 
    Furthermore, let us assume that $n_1$ is minimal with respect to this property. 
    Set  
 \begin{equation}\label{eq:gi}
g= \begin{pmatrix}
g'\\
g''
\end{pmatrix}=
\begin{pmatrix}
 g_1 \\
 \vdots \\
 g_n
 \end{pmatrix}=T \begin{pmatrix}
 f \\
 \vdots \\
 \f^{n-1}(f)
 \end{pmatrix}\in F^n,
  \end{equation}
 where
 $$g'=\begin{pmatrix}
 g_1 \\
 \vdots\\
 g_{n_1}
 \end{pmatrix}\in F^{n_1} \text{ and } g''=\begin{pmatrix}
 g_{n_1+1}\\
 \vdots \\
 g_n
 \end{pmatrix}\in F^{n_2}.
 $$
 Since the coefficients of $T$ belong to $K$,  it follows from Equality \eqref{eq:gi} 
 that  $g\in K^n$ if and only if  $f\in K$. 
From now on, we assume by contradiction that $f\not\in K$. Hence, 
at least one coordinate of $g$ does not belong to $K$.  
 
 We have 
\begin{equation}\label{eq1}
\f\begin{pmatrix}
g' \\
g''
\end{pmatrix}=\begin{pmatrix}
A_1& A_{1,2} \\ 
0 & A_2  
\end{pmatrix}\begin{pmatrix}
g' \\
g''
\end{pmatrix} \,.
\end{equation}
Thus, $g''$ is a solution to the system $\phi(Y)=A_2 Y$. Furthermore, since $f$ is $\sigma$-algebraic over $K$, 
the $g_i$ are also $\sigma$-algebraic over $K$. By ($\mbox{H}_{n_2}$) and Remark~\ref{rem:orderofthesystorderoftheequation}, we obtain that $g''\in K^{n_2}$.

Let $\G_1$ denote the Galois group of the system  $\phi(Y)=A_{1}Y$ over $K$. Let us prove that $\G_1$ is irreducible. 
Indeed, if $\G_1$  were reducible, 
 by Lemma \ref{lem: reducible1}, there would exists a gauge transformation changing $A_1$ into a block upper 
 triangular matrix, contradicting the minimality of $n_1$.   
 Thus the Galois group of $\f(Y)=A_1 Y$ is irreducible.

\begin{claim} 
One has $n_1=1$.
\end{claim}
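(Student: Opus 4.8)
The plan is to argue by contradiction: I would assume $n_1\geq 2$ and deduce that $f\in K$, which contradicts the standing hypothesis $f\notin K$ and thus forces $n_1=1$. The role of the minimality of $n_1$ is precisely that it makes $\G_1$ irreducible, so that Proposition~\ref{prop: colonne} becomes available for $A_1$ as soon as $n_1\geq 2$. First I would record the two structural facts already at hand. From \eqref{eq1} together with $g''\in K^{n_2}$, the vector $g'$ solves the inhomogeneous system $\f(g')=A_1g'+b$ with $b:=A_{1,2}g''\in K^{n_1}$. Moreover, since $f$ is $\s$-algebraic over $K$ and $\f\s=\s\f$ with $\f(K)=K$, each $\f^i(f)$ is $\s$-algebraic, hence every coordinate of $g=T(f,\dots,\f^{n-1}(f))^\top$ is $\s$-algebraic; as $T\in\GL_n(K)$ and $g$ has entries in $S_A$, we get $g'\in\pi(S_A)^{n_1}$. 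Finally, $L_{A_1}=K\langle U_1\rangle_\s\subseteq L_A$ is a field (a sub-pseudofield of the field $L_A$), so Proposition~\ref{prop: colonne} applies verbatim to $A_1$.

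The key step is to show that the $\s$-Galois group cannot move $g'$. Let $V_1:=\{y\in L_A^{n_1}\mid \f(y)=A_1y\}$; since $L_A^\f=\C$ and a fundamental matrix $U_1$ lives in $L_{A_1}$, one has $V_1\subseteq L_{A_1}^{n_1}$ and $\dim_\C V_1=n_1$. For an arbitrary $\C$-$\s$-algebra $B$ and any $\tau\in\sGal(L_A|K)(B)$, I would set $d:=\tau(g'\otimes 1)-g'\otimes 1$. Applying $\tau$ to $\f(g'\otimes 1)=A_1(g'\otimes 1)+b\otimes 1$ and using that $\tau$ commutes with $\f$ and fixes $b\otimes 1$ (because $b\in K^{n_1}$), I obtain $\f(d)=A_1d$; by Lemma~\ref{lemma: fsolutions and base extension} this gives $d\in V_1\otimes_C B$. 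On the other hand, since $\pi(S_A)$ is stable under $\sGal(L_A|K)$ by Corollary~\ref{prop1} and $g'\in\pi(S_A)^{n_1}$, the coordinates of $d$ lie in $\pi(S_A)\otimes_C B$. Now Proposition~\ref{prop: colonne}, applicable exactly because $n_1\geq 2$ and $\G_1$ is irreducible, says that every nonzero element of $V_1$ has a $\s$-transcendental coordinate, i.e. $V_1\cap\pi(S_A)^{n_1}=0$ inside $L_A^{n_1}$. As $C=\C$ is a field, tensoring is exact and intersection of $C$-subspaces commutes with base change, so $(V_1\otimes_C B)\cap(\pi(S_A)\otimes_C B)^{n_1}=(V_1\cap\pi(S_A)^{n_1})\otimes_C B=0$. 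Hence $d=0$.

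It then follows that $\tau(g'\otimes 1)=g'\otimes 1$ for every $B$ and every $\tau\in\sGal(L_A|K)(B)$, so the $\s$-Galois correspondence (Proposition~\ref{propo:spgaloiscorresptransdeg}) yields $g'\in K^{n_1}$. Combined with $g''\in K^{n_2}$ this gives $g\in K^n$, and since $g=T(f,\dots,\f^{n-1}(f))^\top$ with $T\in\GL_n(K)$ invertible, I conclude $f\in K$, contradicting $f\notin K$; therefore $n_1=1$. The point I expect to require the most care is the functorial bookkeeping in the middle paragraph, namely transporting the conclusion of Proposition~\ref{prop: colonne} (phrased for solutions over $\C$) to the $B$-points of the Galois group. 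This is precisely what the flatness of $B$ over the field $\C$ together with the base-change Lemma~\ref{lemma: fsolutions and base extension} and the $\sGal$-stability of $\pi(S_A)$ are designed to handle, so I do not anticipate any genuinely new difficulty there.
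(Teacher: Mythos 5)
Your proof is correct and follows essentially the same route as the paper: both arguments hinge on the fact that $\tau(g')-g'$ is a solution of $\f(Y)=A_1Y$ with $\s$-algebraic coordinates (using the stability of $\pi(S_A)$ from Corollary~\ref{prop1}, Lemma~\ref{lemma: fsolutions and base extension}, and Proposition~\ref{prop: colonne} applied to the irreducible group $\G_1$). You merely run the contradiction in the contrapositive direction (assume $n_1\geq 2$, conclude $g'\in K^{n_1}$ and hence $f\in K$), whereas the paper exhibits a nonzero $\s$-algebraic solution from $g'\notin K^{n_1}$ and lets Proposition~\ref{prop: colonne} force $n_1=1$ directly; the content is identical.
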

 
 \begin{proof}[Proof of the claim] 
Let $S_A=K\{U,\frac{1}{\det(U)}\}_\s\subseteq L_A$ denote the $\s$-Picard-Vessiot ring for the system 
$\phi(Y)=AY$. 
Note that, by assumption, $f$ belongs to $S_A$ and thus all the 
$g_{i}$'s  also belong to $S_A$.  
We know that $g''\in K^{n_2}$. Since, by assumption, $g\notin K^{n}$, we have that  $g'\notin K^{n_1}$. 
It follows from the $\s$-Galois correspondence, see Proposition~\ref{propo:spgaloiscorresptransdeg}, 
that there exist a $\C$-$\s$-algebra $B$ and $\tau\in \sGal(L_A/K)(B)$, (i.e. $\tau\colon S_A\otimes_\C B\to S_A\otimes_\C B$ is a $K\otimes_\C B$-$\f\s$-automorphism) with $\tau(g')\neq g'$. 
By \eqref{eq1},
 $\f(g')=A_1g'+A_{1,2}g''$. As $\tau$ commutes with $\f$ and fixes the elements of $K$, 
 we see that $\tau(g')$ 
 is also a solution to $\f(Y)=A_1Y+A_{1,2}g''$. 
 Therefore $g'-\tau(g')\in (S_A\otimes_\C B)^n$ is a nonzero solution of $\f(Y)=A_1Y$. 
Since  $L_A$ is a field, $S_A$ is an integral domain. 
By Corollary \ref{prop1}, $\pi(S_A/K)$, the set of $\s$-algebraic element over $K$ in $S_A$ is invariant under 
the action $\sGal(L_A|K)$ and is a $\s$-ring. 
 Since $g'\in \pi(S_A/K)^{n_1} $, we find successively $\tau(g')\in (\pi(S_A/K)\otimes_\C B)^{n_1}$ and $g'-\tau(g')\in (\pi(S_A/K)\otimes_\C B)^{n_1}$. The latter is a nonzero solution of $\f(Y)=A_1Y$. It follows from Lemma \ref{lemma: fsolutions and base extension} that $\f(Y)=A_1Y$ has a nonzero solution in $\pi(S_A/K)^{n_1}$. As the Galois group of $\f(Y)=A_1Y$ is irreducible and $L_A$ is a field, Proposition \ref{prop: colonne} implies that $n_1=1$.  
\end{proof}

As $n_1=1$, we see that $g'=g_1\in F$ 
is a solution to the inhomogeneous linear order one equation 
$$
\phi (g_1)=A_1 g_1+A_{1,2}g''\,.
$$
Since $g_1\in F$ is $\sigma$-algebraic over $K$ and since $A_1$ and $A_{1,2}g''$ belong to $K$, 
Proposition \ref{propo:shift_shift_algrank1} implies that $g_1\in K$.  
It follows that $g=\begin{pmatrix}
g_1 \\
g''
\end{pmatrix}\in K^n$, and thus 
$f\in K$, providing a contradiction. 
\end{proof}

%%%%%%%%%%%%%%%%%%%%%%%%%%%%%%%%%%%%%%%%%%%%%%%%%
%%%%%%%%%%%%%%%%%%%%%%%%%%%%%%%%%%%%%%%%%%%%%%%%%
\appendix
\section{Difference algebraic groups and their actions}\label{appendix}

In this appendix we collect some basic definitions and results concerning difference algebraic groups and their actions that are needed for the proof of Theorem \ref{thm: main}. For a more detailed introduction to difference algebraic groups see \cite[Appendix A]{DVHaWib1}, \cite{Wibmer:FinitenessPropertiesOfAffineDifferenceAlgebraicGroups} and \cite{Wibmer:AlmostsimpleAffineDifferenceAlgebraicGroups}.

\subsection{Difference algebraic groups}\label{sec:sigmagroupscheme}

Difference algebraic groups are the group objects in the category of difference varieties. So we first introduce difference varieties. Throughout this appendix $C$ denotes an arbitrary $\s$-field (not necessarily of characteristic zero). The \emph{$\s$-polynomial ring} $C\{y_1,\ldots,y_n\}$ over $C$ is the polynomial ring over $C$ in the variables $y_1,\ldots,y_n,\s(y_1),\dots,\s(y_n),\ldots$ with action of $\s$ extended from $C$ as suggested by the names of the variables. If $B$ is a $C$-$\s$-algebra and $x=(x_1,\ldots,x_n)\in B^n$, then a $\s$-polynomial $f\in C\{y_1,\ldots,y_n\}$ can be evaluated at $x$ by substituting $\s^i(y_j)$ with $\s^i(x_j)$.
For a subset $F$ of $C\{y_1,\ldots,y_n\}$ we denote the set of solutions of $F$ in $B^n$ with $\V_B(F)$. Note that $B\rightsquigarrow \V_B(F)$ is naturally a functor from the category of $C$-$\s$-algebras to the category of sets.

A difference variety is in essence the set of solutions of a set of $\s$-polynomials. There are two reasons why we cannot simply consider the solutions in some fixed large $\s$-field extension of $C$. Firstly, there is no suitable notion of a $\s$-closure of a $\s$-field, similar to the algebraic closure of a field or the differential closure of a differential field. Secondly, there are many systems of algebraic difference equations where the solutions in $\s$-field extensions reflect very poorly the content of the equations. For example, the system $y+\s(y)=1,\ y\s(y)=0$ has no solution in a $\s$-field. However, $x=(1,0,1,0,\ldots)\in\C^\N$ is a solution in the $\s$-ring of sequences. (Here the action of $\s$ on $\C^\N$ is given by $\s((x_i)_{i\in\N})=(x_{i+1})_{i\in\N}$.)
We therefore define difference varieties as functors, rather than as subsets of some $\widetilde{C}^n$ for some large $\s$-field extension $\widetilde{C}$ of $C$.

\begin{defi}
	A \emph{$\s$-variety} $X$ (over $C$) is a functor from the category of $C$-$\s$-algebras to the category of sets that is isomorphic to a functor of the form $B\rightsquigarrow\V_B(F)$ for some $n\geq 1$ and $F\subset C\{y_1,\ldots,y_n\}$. A \emph{morphism} of $\s$-varieties is a morphism of functors. 
\end{defi}

The functor $X$ given by $B\rightsquigarrow\V_B(F)$ is representable, i.e. there exists a $C$-$\s$-algebra $C\{X\}$ such that $X$ is isomorphic to $\Hom(C\{X\},-)$. Indeed, to specify a solution of $F$ in $B^n$, is equivalent to specifying a morphism of $C$-$\s$-algebras $C\{y_1,\ldots,y_n\}\to B$ that sends all elements in $F$ to zero. The latter is equivalent to specifying a morphism $C\{y_1,\ldots,y_n\}/[F]\to B$, where  $[F]=(\s^i(f)|\ f\in F,\ i\in\N)\subseteq C\{y_1,\ldots,y_n\}$ denotes the $\s$-ideal generated by $F$.
We can therefore choose $C\{X\}=C\{y_1,\ldots,y_n\}/[F]$. In the sequel we will usually identify $X$ and $\Hom(C\{X\},-)$.

A $C$-$\s$-algebra $B$ is \emph{finitely $\s$-generated} if there exists a finite subset $M$ of $B$ such that $B=C\{M\}_\s$. Clearly, $C\{X\}=C\{y_1,\ldots,y_n\}/[F]$ is finitely $\s$-generated. Conversely, every finitely $\s$-generated $C$-$\s$-algebra is isomorphic to one of the form $C\{y_1,\ldots,y_n\}/[F]$. It follows that a functor $X$ from the category of $C$-$\s$-algebras to the category of sets is a $\s$-variety if and only if it is representable by a finitely $\s$-generated $C$-$\s$-algebra. By the Yoneda lemma, this finitely $\s$-generated $C$-$\s$-algebra is uniquely determined by $X$ up to an isomorphism of $C$-$\s$-algebras. We denote it with $C\{X\}$ and call it the \emph{coordinate ring} of $X$. Moreover, a morphism $\eta\colon X\to Y$ of $\s$-varieties corresponds to a morphism $\eta^*\colon C\{Y\}\to C\{X\}$ of $C$-$\s$-algebras. Note that $\eta$ can be recovered from $\eta^*$ via
$\eta_B\colon X(B)=\Hom(C\{X\},B)\to \Hom(C\{Y\},B)=Y(B),\ \psi\mapsto \eta^*\circ\psi$ for any $C$-$\s$-algebra $B$. In summary, we see that the category of $\s$-varieties over $C$ is anti-equivalent to the category of finitely $\s$-generated $C$-$\s$-algebras. In the sequel we will usually identify a $\s$-variety $X$ with $\Hom(C\{X\},-)$.

The category of $\s$-varieties has products. Indeed, if $X$ and $Y$ are $\s$-varieties, then the functor $X\times Y$ given by $B\rightsquigarrow X(B)\times Y(B)$ is represented by $C\{X\}\otimes_C C\{Y\}$. Therefore we can make the following definition.

\begin{defi} \label{defi: salgebraic group}
	A \emph{$\s$-algebraic group} $G$ (over $C$) is a group object in the category of $\s$-varieties (over $C$), i.e. a $\s$-variety $G$ together with morphisms of $\s$-varieties $G\times G\to G$ (multiplication), $G\to G$ (inversion), $1\to G$ (identity) satisfying the group axioms.
\end{defi}

Here $1$ is the functor that associates the trivial group to any $C$-$\s$-algebra $B$. In particular, $G(B)$ is a group for any $C$-$\s$-algebra $B$. A morphism $\eta\colon G\to H$ of $\s$-algebraic groups is a morphism of $\s$-varieties that respect the group structure, i.e. $\eta_B\colon G(B)\to H(B)$ is a morphism of groups for every $C$-$\s$-algebra $B$.

A \emph{$C$-$\s$-Hopf algebra} is a $C$-$\s$-algebra with the structure of a $C$-Hopf algebra such that the Hopf algebra structure maps are morphisms of $C$-$\s$-algebras. Since the category of $\s$-varieties over $C$ is anti-equivalent to the category of finitely $\s$-generated $C$-$\s$-algebras, it follows that the category of $\s$-algebraic groups over $C$ is equivalent to the category of $C$-$\s$-algebras that are finitely $\s$-generated over $C$.

\begin{ex}\label{ex1}
	If $\G$ is an affine algebraic group over $C$, then the functor $[\s]_C\G$ given by $B\rightsquigarrow \G(B^\sharp)$ is a $\s$-algebraic group. Here $B^\sharp$ denotes the $C$-algebra obtained from the $C$-$\s$-algebra by forgetting $\s$. 
	For example, if $V$ is a finite dimensional vector space and $\G=\GL_V$, then $[\s]_C\GL_V$ is the functor that associates to a $C$-$\s$-algebra $B$ the group of $B$-linear automorphism of $V\otimes_C B$. Fixing a basis of $V$, we see that $[\s]_C\GL_V$ is represented by $C\{[\s]_C\GL_{n,C}\}=C\{T_{ij},\frac{1}{\det(T)}\}_\s$, the $\s$-polynomial ring in the variables $T_{ij}$ $(1\leq i,j\leq n)$ localized at the multiplicatively closed subset generated by $\det(T),\ \s(\det(T)),\ldots$. See  \cite[Example 2.6 and Section 1.3]{Wibmer:FinitenessPropertiesOfAffineDifferenceAlgebraicGroups} for a description of $C\{[\s]_C\G\}$ in the general case. In particular, we have an injective morphism $C[\G]\to C\{[\s]_C\G\}$ of $C$-Hopf algebras.
\end{ex}
See \cite[Section 2]{Wibmer:FinitenessPropertiesOfAffineDifferenceAlgebraicGroups} for a list of further examples of $\s$-algebraic groups.

\medskip

If $X$ is a $\s$-variety and $I$ is a $\s$-ideal in $C\{X\}$, we can define a subfunctor $Y$ of $X$ by
$Y(B)=\{\psi\in\Hom(C\{X\},B)|\ \psi(I)=0\}\subset \Hom(C\{X\},B)=X(B)$ for any $C$-$\s$-algebra $B$. Note that $Y$ is a $\s$-variety since it is represented by $C\{X\}/I$. We call $Y$ the \emph{$\s$-closed $\s$-subvariety} of $X$ defined by $I$. For example, if $F\subset F'\subset C\{y_1,\ldots,y_n\}$, then the functor $Y$ given by $B\rightsquigarrow\V_B(F')$ is a $\s$-closed $\s$-subvariety of the functor $X$ given by $B\rightsquigarrow\V_B(F)$, since it corresponds to all morphisms on $C\{X\}=C\{y_1,\ldots,y_n\}/[F]$ that annul the image of $[F']$ in $C\{X\}$. 
For a given $\s$-variety $X$, the $\s$-closed $\s$-subvarieties of $X$ are in bijection with the $\s$-ideals of $C\{X\}$ (\cite[Lemma~1.4]{Wibmer:FinitenessPropertiesOfAffineDifferenceAlgebraicGroups}).

\begin{defi} \label{defi: sclosed subgroup}
	A \emph{$\s$-closed subgroup} $H$ of a $\s$-algebraic group $G$ is a $\s$-closed $\s$-subvariety $H$ of $G$  such that $H(B)$ is a subgroup of $G(B)$ for any $C$-$\s$-algebra $B$. A $\s$-closed subgroup of an affine algebraic group $\G$ is a $\s$-closed subgroup of $[\s]_C\G$.
\end{defi}

\begin{defi} \label{defi: Zariski dense}
	Let $\G$ be an affine algebraic group and let $G$ be a $\s$-closed subgroup of $\G$ (in particular, $G(B)\leq \G(B)$ for all $C$-$\s$-algebras $B$). Then \emph{$G$ is Zariski dense in $\G$} if every regular function on $\G$ that vanishes on $G$ is zero, i.e. for $f\in C[\G]$ with $f(g)=0$ for all $g\in G(B)$ for all $C$-$\s$-algebras $B$, we have $f=0$. 
\end{defi}

In the above definition, we can identify $\G(B)$ with the set of $C$-algebra morphisms from $C[\G]$ to $B$. For $f\in C[\G]$ and $g\colon C[\G]\to B$ a $C$-algebra morphism, we then have $g(f)=f(g)$.

A $\s$-closed subgroup $G$ of $\G$ gives rise to the composition of two morphisms $\psi\colon C[\G]\to C\{[\s]_C\G\}\to C\{G\}$ of $C$-algebras. Note that for a $C$-$\s$-algebra $B$, the inclusion $G(B)=\Hom(C\{G\},B)\to \G(B^\sharp)=\Hom(C[\G],B^\sharp)$ is given by precomposing with $\psi$.
The morphism $\psi$ is injective if and only if $G$ is Zariski dense in $\G$. (This follows by choosing $B=C\{G\}$ and $g=\id\in G(B)=\Hom(C\{G\},C\{G\})$.)

\begin{defn}\label{defn:representationsigmaalggroups}
	Let $G$ be a $\s$-algebraic group over $C$. A \emph{representation} of $G$ is a finite-dimensional $C$-vector space $V$ together with a morphism $G\to [\s]_C\GL_V$ of $\s$-algebraic groups. In particular, for a $C$-$\s$-algebra $B$, every $g\in G(B)$ acts on $V\otimes_C B$ through a $B$-linear automorphism.
	
	A $C$-subspace $W$ of $V$ is \emph{subrepresentation} of $V$ if $g\colon V\otimes_C B\to V\otimes_C B$ maps $W\otimes_C B$ into $W\otimes_C B$ for every $g\in G(B)$ and every $C$-$\s$-algebra $B$.
	
	The representation $V$ of $G$ is \emph{irreducible} if the only subrepresentations of $V$ are the zero subspace and $V$ itself.
\end{defn}

A morphism $f\colon V\to W$ of representations of $G$ is a $C$-linear map such that the diagram 
$$
\xymatrix{
	V\otimes_C B \ar_-g[d] \ar^-{f\otimes B}[r] & W\otimes_C B \ar[d]^g \\
	V\otimes_C B \ar^-{f\otimes B}[r] & W\otimes_C B
}
$$
commutes for every $g\in G(B)$ and every $C$-$\s$-algebra $B$. As for affine algebraic groups (see, for instance, \cite[Section 3.2]{WaterhouseIntrogroupscheme}), the category of representations of $G$ is equivalent to the category of finite dimensional comodules for the Hopf-algebra $C\{G\}$.

\begin{lemma}\label{lemma:irreduciblesigmarepresentationirreduciblezariskiclosurerep}
	Let $V$ be a finite dimensional $C$-vector space and let $\G$ be a closed subgroup of $\GL_V$. If $G$ is a $\s$-closed Zariski dense subgroup of $\G$, then any $G$-subrepresentation of $V$ is a $\G$-subrepresentation of $V$. In particular, if $V$ is irreducible as a representation of $\G$, then $V$ is irreducible as a representation of $G$.
\end{lemma}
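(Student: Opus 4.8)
The plan is to translate the functorial notion of a subrepresentation into the vanishing of explicit coordinate functions on $\G$, and then to exploit the injectivity of the comparison map $\psi\colon C[\G]\to C\{G\}$ that Zariski density provides. First I would fix a basis $v_1,\dots,v_n$ of $V$ for which $v_1,\dots,v_m$ is a basis of a given $C$-subspace $W$, thereby identifying $\GL_V$ with $\GL_{n,C}$ and the tautological matrix entries with $T_{ij}\in C[\GL_{n,C}]$. For a commutative $C$-algebra $B'$ and $g\in\GL_V(B')$, the automorphism $g$ of $V\otimes_C B'$ stabilizes $W\otimes_C B'$ if and only if its matrix is block upper triangular with respect to this basis, that is, if and only if $T_{ij}(g)=0$ for all $i>m$ and $j\leq m$. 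I would then let $f_{ij}\in C[\G]$ denote the image of $T_{ij}$ under the restriction $C[\GL_{n,C}]\to C[\G]$, for these indices $i>m$, $j\leq m$.

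Next I would record the two resulting characterizations. On the one hand, $W$ is a $\G$-subrepresentation of $V$ in the sense of Definition~\ref{defn:representationsigmaalggroups} precisely when $f_{ij}=0$ in $C[\G]$ for all such $i,j$; this follows from the Yoneda lemma by testing the stabilization condition on the generic point, namely the identity $\id\in\G(C[\G])=\Hom(C[\G],C[\G])$. On the other hand, $W$ is a $G$-subrepresentation of $V$ precisely when $\psi(f_{ij})=0$ in $C\{G\}$ for all such $i,j$; here one tests on $\id\in G(C\{G\})=\Hom(C\{G\},C\{G\})$ and uses the fact recalled after Definition~\ref{defi: Zariski dense} that the inclusion $G(B)\hookrightarrow\G(B^\sharp)$ is precomposition with $\psi$, so that the matrix entries through which $g\in G(B)$ acts on $V\otimes_C B$ are exactly the elements $g(\psi(f_{ij}))$ of $B$.

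Finally, because $G$ is Zariski dense in $\G$, the map $\psi$ is injective, again by the discussion following Definition~\ref{defi: Zariski dense}. Consequently $\psi(f_{ij})=0$ forces $f_{ij}=0$, and comparing the two characterizations shows that a subspace is a $G$-subrepresentation if and only if it is a $\G$-subrepresentation; in particular every $G$-subrepresentation is a $\G$-subrepresentation. The final clause is then immediate: if $V$ is irreducible as a representation of $\G$, its only $\G$-subrepresentations are $0$ and $V$, hence its only $G$-subrepresentations are $0$ and $V$, so $V$ is irreducible as a representation of $G$. I expect the only genuinely delicate point to be the bookkeeping in the middle step, namely making the functorial quantifier ``for all $C$-$\s$-algebras $B$ and all $g$'' precise, reducing it to the generic point, and correctly matching the action of $G$ through $C\{G\}$ with that of $\G$ through $C[\G]$ via $\psi$; once this dictionary is set up, the conclusion is a one-line consequence of the injectivity of $\psi$.
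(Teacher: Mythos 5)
Your proof is correct and is essentially the same argument as the paper's: both reduce the statement to the injectivity of the comparison map $C[\G]\to C\{G\}$ supplied by Zariski density. The paper phrases this dually via comodules (the $C\{G\}$-comodule structure on $V$ factors through the $C[\G]$-comodule structure, so $\rho(W)\subset W\otimes_C C\{G\}$ forces $\overline{\rho}(W)\subset W\otimes_C C[\G]$), which is just the coordinate-free version of your generic-point computation with the matrix entries $f_{ij}$.
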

\begin{proof}
	Since $G$ is Zariski dense in $\G$ we have $C[\G] \subset C\{G\}$. The comodule structure $\rho\colon V \rightarrow V \otimes_C C\{G\}$ corresponding to the representation of $G$, is obtained from the 
	comodule structure $\overline{\rho}\colon V\to V\otimes_C C[\G]$ corresponding to the representation of $\G$, 
	by composing with the inclusion $V\otimes_C C[\G]\to V\otimes_C C\{G\}$. 
	A $C$-subspace $W$ of $V$ is a $G$-subrepresentation if and only if $\rho(W)\subset W\otimes_C C\{G\}$. Since $\rho$ factors through $V\otimes_C C[\G]$ this implies $\overline{\rho}(W)\subset W\otimes_C C[\G]$, i.e. $W$ is a $\G$-subrepresentation. 
\end{proof}

The following definition introduces the $\s$-dimension of a $\s$-algebraic group.

\begin{defi} \label{defi:sdim}
	Let $B$ be a finitely $\s$-generated $C$-$\s$-algebra and let $M\subseteq B$ be a finite set such that $B=C\{M\}_\s$. For $i\in \N$ let $d_i\in\N$ denote the dimension of the $C$-algebra $C[M,\s(M),\ldots,\s^i(M)]$. Once can show (\cite[Theorem 2.2]{Wibmer:OnTheDimension}) that $\sdim(B)=\lim_{i\to\infty}\frac{d_i}{i+1}$ exists and does not depend on the choice of $M$.
	The \emph{$\s$-dimension} $\sdim(G)$ of a $\s$-algebraic group $G$ is $\sdim(C\{G\})$.
\end{defi}

In general, $\sdim(B)$ need not be an integer. However, one can show (\cite[Theorem 5.1]{Wibmer:OnTheDimension}) that $\sdim(G)$ is always an integer. From this it follows that our definition of the $\s$-dimension of a $\s$-algebraic group agrees with the somewhat more complicated definition given in \cite[Definition~A.25]{DVHaWib1}.

\begin{ex} \label{ex: dimensions agree}
	If $\G$ is a linear algebraic group over $C$, then $\sdim([\s]_C\G)=\dim(\G)$ by \cite[Example 3.10]{Wibmer:FinitenessPropertiesOfAffineDifferenceAlgebraicGroups}.
\end{ex}

\subsection{Difference algebraic elements}\label{subsec:sigmaalgebraic}

The aim of this second part of the appendix is to show that, at least under some mild technical assumptions, the subring $\pi(S|K)$ of a $\s$-Picard-Vessiot ring $S$ over $K$ consisting of all elements of $S$ that are $\s$-algebraic over $K$, is stable under the action of the $\s$-Galois group $G$. It is obvious that every $K$-$\fs$-automorphism of $S$ maps $\pi(S|K)$ into itself, i.e. $\pi(S|K)$ is stable under $G(C)$, where $C=K^\f$. It is much less obvious that $\pi(S|K)$ is stable under the action of $G$ in the strong sense that for every $C$-$\s$-algebra $B$, every element of $G(B)$ maps $\pi(S|K)\otimes_C B$ into $\pi(S|K)\otimes_C B$. 

Let $C$ be $\s$-field and $B$ a $C$-$\s$-algebra. Recall that $a\in B$ is called \emph{$\s$-algebraic over $C$} if $a$ satisfies a nonzero $\s$-polynomial over $C$.  We define
$$\pi(B|C)=\{b\in B |\ b \text{ is $\s$-algebraic over $C$} \}.$$
The following example shows that $\pi(B|C)$ is in general not a subring of $B$.

\begin{ex} \label{ex: pi not closed}
	Let $C=\mathbb{Q}$ (with $\s=\id$) and let $K=\mathbb{Q}(x_1,y_1,x_2,y_2,\ldots)$ be a rational function field in infinitely many variables. Let $B=K^\mathbb{N}$ be the ring of sequences in $K$ that we equip with a structure of $\s$-ring with $\s((a_n)_{n\in\mathbb{N}})=(a_{n+1})_{n\in\mathbb{N}}$. Then $x=(x_1,0,x_2,0\ldots)\in B$ and $y=(0,y_1,0,y_2,\ldots)\in B$ are $\s$-algebraic over $C$ because $x\s(x)=0$ and $y\s(y)=0$. However, $x+y=(x_1,y_1,x_2,y_2,\ldots)\in B$ is in general not $\s$-algebraic over $C$.
\end{ex}

However, it follows from \cite[Theorem 4.1.2 (iii)]{Levin}, that if $B$ is a $\s$-domain, (i.e. $B$ is an integral domain and $\s\colon B\to B$ is injective), then $\pi(B|C)$ is a $C$-$\s$-subalgebra of $B$.
Because of the pathology exhibited in Example \ref{ex: pi not closed}, we modify the definition of $\pi(B|C)$ but in such a way that it remains unchanged for $\s$-domains.

\begin{defi}\label{defi1}
	Let $B$ be a $C$-$\s$-algebra. 
	An element $b\in B$ is \emph{$\s$-bounded} (over $C$) if the sequence $(\dim(C[b,\s(b),\ldots,\s^i(b)]))_{i\in\mathbb{N}}$ is bounded. We set 
	$$\mu(B|C)=\{b\in B|\ \text{$b$ is $\s$-bounded}\}.$$
\end{defi}

The following lemma explains the connection between $\pi(B|C)$ and $\mu(B|C)$.
\begin{lemma}\label{lem1}
Let $B$ be a $C$-$\s$-algebra. Then $\mu(B|C)\subset \pi(B|C)$. Moreover, if $B$ is a $\s$-domain, then $\mu(B|C)=\pi(B|C)$.
\end{lemma}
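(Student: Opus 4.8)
The plan is to prove the two assertions separately, noting that only the equality in the $\s$-domain case uses the domain hypothesis while the inclusion $\mu(B|C)\subseteq\pi(B|C)$ is purely dimension-theoretic. For this inclusion I would first record the elementary fact that for finitely many elements $a_1,\dots,a_m$ of a nonzero $C$-algebra one has $\dim C[a_1,\dots,a_m]<m$ if and only if $a_1,\dots,a_m$ are algebraically dependent over $C$: writing $C[a_1,\dots,a_m]=C[y_1,\dots,y_m]/I$, the ideal $I$ of relations is nonzero exactly when the $a_i$ satisfy a nontrivial algebraic relation, and any nonzero $I$ contains a nonconstant polynomial, forcing the Krull dimension below $m$. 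Now let $b\in\mu(B|C)$ and let $N$ bound the sequence $d_i:=\dim C[b,\s(b),\dots,\s^i(b)]$. Choosing any $i\geq N$ gives $d_i\leq N\leq i<i+1$, so the $i+1$ elements $b,\s(b),\dots,\s^i(b)$ are algebraically dependent over $C$; this exhibits a nonzero $\s$-polynomial vanishing at $b$, whence $b\in\pi(B|C)$. This direction uses neither that $B$ is a domain nor that $\s$ is injective.

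For the reverse inclusion when $B$ is a $\s$-domain, I would use that $\s$ extends to an embedding of $\Quot(B)$, so each $C[b,\dots,\s^i(b)]$ is an integral domain and $d_i=\trdeg_C C(b,\dots,\s^i(b))$, and then show these transcendence degrees stay bounded. Concretely, let $b\in\pi(B|C)$ and pick the least $n$ for which $b,\s(b),\dots,\s^n(b)$ are algebraically dependent, so that $\s^n(b)$ is algebraic over $C(b,\dots,\s^{n-1}(b))$. Applying $\s$ and using $\s(C)\subseteq C$, a straightforward induction shows that every $\s^m(b)$ is algebraic over $C(b,\dots,\s^{n-1}(b))$. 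Hence $\trdeg_C C\langle b\rangle_\s\leq n$, and by monotonicity of transcendence degree for subfields $d_i=\trdeg_C C(b,\dots,\s^i(b))\leq n$ for all $i$; thus $b$ is $\s$-bounded and $b\in\mu(B|C)$. Together with the first part this yields $\mu(B|C)=\pi(B|C)$.

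The only genuinely delicate point is the interplay with the domain hypothesis in the second inclusion: it is precisely the injectivity of $\s$ on $B$ (allowing passage to $\Quot(B)$ and the preservation of algebraic dependence under $\s$) together with the equality of Krull dimension and transcendence degree for the domains $C[b,\dots,\s^i(b)]$ that makes $\trdeg_C C\langle b\rangle_\s$ finite. Example \ref{ex: pi not closed} shows that without these hypotheses $\pi(B|C)$ may fail to be a ring at all, so some such restriction is unavoidable. In writing this up I would also check the degenerate case $B=0$, where both sides are trivially equal, and observe that the finiteness of $\trdeg_C C\langle b\rangle_\s$ for $\s$-algebraic $b$ is the same standard fact already invoked in the proof of Lemma \ref{lem2}.
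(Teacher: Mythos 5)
Your proof is correct and follows essentially the same route as the paper: the inclusion $\mu(B|C)\subseteq\pi(B|C)$ via the observation that a bounded dimension sequence forces $b,\s(b),\dots,\s^i(b)$ to be algebraically dependent for large $i$, and the reverse inclusion for $\s$-domains by passing to the fraction field and bounding $\dim(C[b,\dots,\s^i(b)])$ by $\trdeg(C\langle b\rangle_\s|C)$. The only (harmless) difference is that the paper cites Levin's Corollary 4.1.18 for the finiteness of this transcendence degree, whereas you reprove it inline by the induction showing every $\s^m(b)$ is algebraic over $C(b,\dots,\s^{n-1}(b))$.
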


\begin{proof}
	If $b\in B$ is $\s$-bounded, then $\dim(C[b,\s(b),\ldots,\s^i(b)])<i+1$ for some $i\geq 1$. Thus $b,\s(b),\ldots,\s^i(b)$ are algebraically dependent over $C$ and so $b$ is $\s$-algebraic. Therefore $\mu(B|C)\subset\pi(B|C)$.\par 
	Let us now assume that $B$ is a  $\s$-domain and let us prove the reverse inclusion.
		Let $b\in B$ be $\s$-algebraic over $C$. Because $B$ is a $\s$-domain, its fraction field $L$ is naturally a $\s$-field extension of $C$ and we can consider the $\s$-subfield $C\langle b\rangle_{\s}$ of $L$. According to \cite[Corollary 4.1.18]{Levin}, the transcendence degree $m=\trdeg(C\langle b\rangle_{\s}|C)$ is finite. Thus $\dim(C[b,\s(b),\ldots,\s^i(b)])\leq m$ for all $i\geq 0$. 
\end{proof}
	
We now establish some properties of $\mu(B|C)$ that will permit us later on to deduce properties of $\pi(B|C)$ when $B$ is a $\s$-domain. 

\begin{lemma} \label{lem: tau is subring}
	Let $B$ be a $C$-$\s$-algebra. Then $\mu(B|C)$ is a $C$-$\s$-subalgebra of $B$.
\end{lemma}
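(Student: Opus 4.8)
The plan is to check directly that $\mu(B|C)$ contains $C$ and is stable under addition, multiplication, and $\s$. The only genuinely non-formal ingredient is a monotonicity property of Krull dimension, which I would isolate first as a sublemma: \emph{if $R\subseteq S$ are finitely generated $C$-subalgebras of $B$, then $\dim(R)\le\dim(S)$.} To prove this I would pick a minimal prime $\mathfrak p$ of $R$ realizing $\dim(R)=\dim(R/\mathfrak p)=\trdeg(\mathrm{Frac}(R/\mathfrak p)|C)$. Since the inclusion $R\hookrightarrow S$ is injective, no nonzero element of $R$ maps to $0$ in $S$, so the localization $(R\setminus\mathfrak p)^{-1}S$ is nonzero and therefore carries a prime $\mathfrak q$ of $S$ whose contraction to $R$ lies inside $\mathfrak p$; minimality of $\mathfrak p$ forces $\mathfrak q\cap R=\mathfrak p$. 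Then $R/\mathfrak p\hookrightarrow S/\mathfrak q$ is an inclusion of finitely generated $C$-domains, so $\trdeg(\mathrm{Frac}(R/\mathfrak p)|C)\le\trdeg(\mathrm{Frac}(S/\mathfrak q)|C)=\dim(S/\mathfrak q)\le\dim(S)$, as desired. I expect this to be the main obstacle: Krull dimension is not monotone under arbitrary ring extensions, and since $B$ need not be a domain (this is exactly the pathology behind Example \ref{ex: pi not closed}), one must exploit injectivity through the nonemptiness of the fibre over $\mathfrak p$.

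Granting the sublemma, the remaining steps are bookkeeping. First, for $c\in C$ one has $C[c,\s(c),\dots,\s^i(c)]=C$, of dimension $0$, so $C\subseteq\mu(B|C)$. Next, if $b\in\mu(B|C)$ is bounded by $m$, then from $C[\s(b),\dots,\s^{i+1}(b)]\subseteq C[b,\s(b),\dots,\s^{i+1}(b)]$ and the sublemma I get $\dim C[\s(b),\dots,\s^{i}(\s(b))]\le m$ for all $i$, whence $\s(b)\in\mu(B|C)$; thus $\mu(B|C)$ is $\s$-stable.

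Finally, for the ring operations, let $a,b\in\mu(B|C)$ be bounded by $m_a,m_b$ and set $R_i=C[a,\s(a),\dots,\s^i(a),\,b,\s(b),\dots,\s^i(b)]$. Since $R_i$ is a homomorphic image of $C[a,\dots,\s^i(a)]\otimes_C C[b,\dots,\s^i(b)]$, the additivity $\dim(A\otimes_C A')=\dim(A)+\dim(A')$ for finitely generated $C$-algebras (equivalently, a direct transcendence-degree estimate over a minimal prime of $R_i$) gives $\dim(R_i)\le m_a+m_b$ uniformly in $i$. Because $\s^k(a+b)$ and $\s^k(ab)$ belong to $R_i$ for all $k\le i$, the sublemma bounds $\dim C[a+b,\dots,\s^i(a+b)]$ and $\dim C[ab,\dots,\s^i(ab)]$ by $m_a+m_b$ for every $i$, so $a+b$ and $ab$ lie in $\mu(B|C)$. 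Combining these facts, $\mu(B|C)$ is a subring containing $C$ and closed under $\s$, hence a $C$-$\s$-subalgebra of $B$.
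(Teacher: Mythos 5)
Your proof is correct and follows essentially the same route as the paper: bound $\dim C[a,\dots,\s^i(a),b,\dots,\s^i(b)]$ uniformly in $i$ and observe that the algebras generated by the iterates of $a+b$, $ab$, and $\s(b)$ sit inside such algebras. You additionally isolate and prove the monotonicity of Krull dimension under inclusions of finitely generated $C$-algebras, which the paper uses without comment, and your additive bound $m_a+m_b$ via $\dim(A\otimes_C A')=\dim A+\dim A'$ is in fact the correct one (the paper's stated bound $mn$ is off in degenerate cases, though this does not affect boundedness).
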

\begin{proof}
	Let $a,b\in B$ be $\s$-bounded. Say $\dim(C[a,\ldots,\s^i(a)])\leq m$ and $\dim(C[b,\ldots,\s^i(b)])\leq n$ for all $i\geq 0$. Then $\dim(C[a,b,\ldots,\s^i(a),\s^i(b)])\leq mn$ for all $i\geq 0$. Because $C[a+b,\ldots,\s^i(a+b)]$ and $C[ab,\ldots,\s^i(ab)]$ are contained in the latter $C$-algebra, it follows that $\dim(C[a+b,\ldots,\s^i(a+b)])\leq mn$ and $\dim(C[ab,\ldots,\s^i(ab)])\leq mn$. Thus $\mu(B|C)$ is a subring of $B$. If $b\in B$ is $\s$-bounded, then clearly also $\s(b)$ is $\s$-bounded. Thus $\mu(B|C)$ is a $C$-$\s$-subalgebra of $B$.
\end{proof}

The following lemma is a slight generalization of \cite[Lemma~6.27]{Wibmer:AlmostsimpleAffineDifferenceAlgebraicGroups}. Recall (\cite[Chapter V, \S 17, No. 3, Definition 1]{Bourbaki}) that an algebra $B$ over a field $C$ is called \emph{regular} if $B\otimes_C D$ is an integral domain for every field extension $D$ of $C$.

\begin{lemma} \label{lemma: good fixed points}
	Let $C$ be an inversive $\s$-field (i.e. $\s\colon C\to C$ is surjective) and let $D$ be a $\s$-field extension of $C$ such that $D$ is a regular field extension of $C$. Then there exists a $\s$-field extension $E$ of $D$, such that every element of $E$ that is fixed by all $C$-$\s$-field automorphisms of $E$ lies in $C$, i.e. $E^{\Aut^{\sigma}(E|C)}=C$.
\end{lemma}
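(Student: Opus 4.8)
The plan is to build $E$ as the union of an increasing tower of $\s$-field extensions of $D$, each obtained from the previous one by a symmetric self-tensor construction, so that the attached ``swap'' automorphisms exhibit enough motion to cut the fixed field down to $C$. Concretely, set $E_0=D$ and, assuming inductively that $E_n$ is a $\s$-field that is a regular extension of $C$, put $E_{n+1}=\Quot(E_n\otimes_C E_n)$, with the two structure embeddings $\lambda_n\colon z\mapsto z\otimes 1$ and $\rho_n\colon z\mapsto 1\otimes z$; glue the tower along the $\lambda_n$ and set $E=\bigcup_n E_n$. The engine of the argument is the swap $s_n\colon x\otimes y\mapsto y\otimes x$ of $E_n\otimes_C E_n$: it is a $C$-$\s$-automorphism, and on the copy $\lambda_n(E_n)\cong E_n$ it acts by $a\mapsto 1\otimes a$, hence moves precisely those $a$ with $a\otimes 1\neq 1\otimes a$, i.e. precisely the elements of $E_n\setminus C$.

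For the construction to make sense I would verify two points. First, each $E_n\otimes_C E_n$ must be an integral domain so that $E_{n+1}$ is a field; this is exactly the hypothesis that $E_n$ is regular over $C$ (cf. the use of \cite[Chapter V, \S 17, No. 3]{Bourbaki}), and regularity propagates up the tower because it is preserved under the base change $-\otimes_C E_n$ and is transitive, whence $E_{n+1}$ is again regular over $C$ and the induction continues, the directed union $E$ being regular over $C$ as well. Second---and this is where the inversiveness of $C$ is essential, and the main obstacle---I must check that $\s$ extends to $E_{n+1}$. The natural candidate is $\s\otimes\s$ on $E_n\otimes_C E_n$, a well-defined ring endomorphism compatible with the $C$-balancing (using $\s(C)\subseteq C$); to pass to $\Quot$ one needs $\s\otimes\s$ to be injective. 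Since $C$ is inversive, $\s(C)=C$, so $\s\otimes\s$ factors as the isomorphism $E_n\otimes_C E_n\xrightarrow{\ \sim\ }\s(E_n)\otimes_C\s(E_n)$ (base change along the automorphism $\s\colon C\to C$) followed by the inclusion $\s(E_n)\otimes_C\s(E_n)\hookrightarrow E_n\otimes_C E_n$, which is injective because tensoring over the field $C$ preserves injections. Thus $\s$ extends to the field $E_{n+1}$, restricts to $\s$ on $E_n$ via $\lambda_n$, and $E$ becomes a $\s$-field extension of $D$.

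Finally I would compute the fixed field. The swap $s_n$ is a $C$-$\s$-automorphism of $E_{n+1}$, and it extends up the tower: given an extension to $E_m$ with $m\ge n+1$, one extends it to $E_{m+1}=\Quot(E_m\otimes_C E_m)$ by applying it in both tensor factors, compatibly with $\lambda_m$. The resulting $\tilde s_n\in\Aut^{\s}(E|C)$ satisfies $\tilde s_n(a)=1\otimes a$ for $a\in E_n$, so $\tilde s_n(a)\neq a$ whenever $a\in E_n\setminus C$, using the standard fact that for a field extension the equalizer of $\lambda_n$ and $\rho_n$ is exactly $C$. Given any $a\in E\setminus C$, choose $n$ with $a\in E_n$; then $\tilde s_n$ fixes $C$ and moves $a$. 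Hence no element of $E\setminus C$ is fixed by all of $\Aut^{\s}(E|C)$, that is, $E^{\Aut^{\s}(E|C)}=C$, as required.
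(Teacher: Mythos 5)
Your proof is correct and follows essentially the same route as the paper's: both build $E$ as the union of the tower $E_{n+1}=\Quot(E_n\otimes_C E_n)$, use regularity to keep the tensor products integral domains, use inversiveness of $C$ to extend $\s$, and use the swap automorphisms (extended up the tower) to move every element outside $C$. The only cosmetic differences are that you prove injectivity of $\s$ on $E_n\otimes_C E_n$ directly rather than citing Toma\v{s}i\'c--Wibmer, and you extend automorphisms by applying them in both tensor factors rather than just one; neither affects the argument.
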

\begin{proof}
	We first establish the following claim. There exists a $\s$-field $E$ of $D$ satisfying the following properties. 
	\begin{itemize}
		\item  For every $d\in D\smallsetminus C$ there exists a $C$-$\s$-automorphism $\tau$ of $E$ with $\tau(d)\neq d$.
		\item Every $C$-$\s$-automorphism of $D$ extends to a $C$-$\s$-automorphism of $E$.
		\item The field extension $E|C$ is regular.
	\end{itemize}
	Because $D$ is a regular extension of $C$, the ring $D\otimes_C D$ is an integral domain (\cite[Chapter V, \S 17, No. 3, Proposition 2]{Bourbaki}).  Since $C$ is inversive, it follows from \cite[Corollary 1.6]{TomasicWibmer:Babbitt} that $\s\colon D\otimes_C D\to D\otimes_C D$ is injective. Therefore the field $E$ of fractions of $D\otimes_C D$ is naturally a $\s$-field. We consider $E$ as $\s$-field extension of $D$ via the embedding $d\mapsto d\otimes 1$. The $C$-$\s$-automorphism $\tau$ of $E$ determined by $\tau(d_1\otimes d_2)=d_2\otimes d_1$ satisfies $\tau(d)\neq d$ for every $d\in D\smallsetminus C$. 
	Moreover, every $C$-$\s$-automorphism $\psi$ of $D$ extends to a $C$-$\s$-automorphism of $E$ by $\tau(d_1\otimes d_2)=\tau(d_1)\otimes d_2$. The tensor product of two regular algebras is regular (\cite[Chapter V, \S 17, No. 3, Prop. 3]{Bourbaki}) and the field of fractions of a regular algebra is regular (\cite[Chapter V, \S 17, No. 4, Cor. to Prop. 4]{Bourbaki}). It thus follows that $E$ is a regular extension of $C$ and the claim is established.
	
	Let us now prove the lemma. By the above claim there exists a $\s$-field extension $E_1$ of $D$ such that every $d\in D\smallsetminus C$ can be moved by a $C$-$\s$-field automorphism of $E_1$, every $C$-$\s$-automorphism of $D$ extends to $E_1$ and $E_1|C$ is regular. Applying the claim again (with $E_1|C$ in place of $D|C$) yields a $\s$-field extension $E_2$ of $E_1$ such that every element of $E_1\smallsetminus C$ can be moved by a $C$-$\s$-field automorphism of $E_2$, every $C$-$\s$-automorphism of $E_1$ extends to $E_2$ and $E_2|C$ is regular. Continuing like this, we obtain an ascending chain $C\subset D\subset E_1\subset E_2\subset\cdots$ of $\s$-field extensions. The union $E=\cup_{i=1}^{\infty} E_i$ has the desired property. 
\end{proof}

\begin{lemma} \label{lem: tau and base extension}
	Let $C$ be an inversive $\s$-field and let $B$ be a $C$-$\s$-algebra. Let $D|C$ be an extension of $\s$-fields such that $D$ is a regular field extension of $C$. Then 
	$$\mu(B\otimes_C D|D)=\mu(B|C)\otimes_C D.$$
\end{lemma}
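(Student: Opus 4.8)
The plan is to prove the two inclusions separately. The inclusion $\mu(B|C)\otimes_C D\subseteq\mu(B\otimes_C D|D)$ is elementary, while the reverse inclusion carries all the content and will be obtained by a Galois descent.

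The whole argument rests on one standard commutative-algebra fact: Krull dimension is insensitive to base field extension, i.e. $\dim(R\otimes_C D)=\dim(R)$ for any finitely $C$-generated algebra $R$ and any field extension $D|C$. For the easy inclusion I would take $b\in\mu(B|C)$, note that $\s$ acts diagonally so that $\s^i(b\otimes1)=\s^i(b)\otimes1$ and $D[b\otimes1,\dots,\s^i(b\otimes1)]=C[b,\dots,\s^i(b)]\otimes_C D$, and conclude from the dimension fact that $b\otimes1$ is $\s$-bounded over $D$. Since $\mu(B\otimes_C D|D)$ is a $D$-$\s$-subalgebra (Lemma \ref{lem: tau is subring} read over $D$) containing every such $b\otimes1$ together with $1\otimes D$, it contains $\mu(B|C)\otimes_C D$.

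For the reverse inclusion, because $C$ is inversive and $D|C$ is regular, Lemma \ref{lemma: good fixed points} furnishes a $\s$-field extension $E|D$ with $E^{\Gamma}=C$, where $\Gamma=\Aut^\s(E|C)$; every $\gamma\in\Gamma$ yields a $\s$-automorphism $\id_B\otimes\gamma$ of $B\otimes_C E$ fixing $B\otimes1$. I would first reduce to the case of $E$: given $x\in\mu(B\otimes_C D|D)$, base-changing the generating algebras from $D$ to $E$ shows $x\in\mu(B\otimes_C E|E)$, so it is enough to prove $\mu(B\otimes_C E|E)\subseteq\mu(B|C)\otimes_C E$ and then intersect with $B\otimes_C D$, an elementary basis computation giving $(\mu(B|C)\otimes_C E)\cap(B\otimes_C D)=\mu(B|C)\otimes_C D$ inside $B\otimes_C E$. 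To establish the inclusion over $E$, I would write $y=\sum_{j=1}^n b_j\otimes e_j$ with the $e_j$ linearly independent over $C$ and show each $b_j\in\mu(B|C)$. The key point is the Artin-type separation statement: since $C=E^{\Gamma}$ and the $e_j$ are $C$-independent, there exist $\gamma_1,\dots,\gamma_n\in\Gamma$ with $\det(\gamma_i(e_j))\neq0$, proved by the usual minimal-relation argument (linear independence of field automorphisms). Inverting this matrix over $E$ writes each $b_k\otimes1$ as an $E$-linear combination of the elements $(\id_B\otimes\gamma_i)(y)$; each of these lies in $\mu(B\otimes_C E|E)$ because a $\s$-automorphism conjugates the generating algebras isomorphically and hence preserves their Krull dimension, and $\mu(B\otimes_C E|E)$ is an $E$-algebra, so $b_k\otimes1$ is $\s$-bounded over $E$. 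A final application of the dimension fact, together with $E[b_k,\dots,\s^i(b_k)]=C[b_k,\dots,\s^i(b_k)]\otimes_C E$, then gives $b_k\in\mu(B|C)$.

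The main obstacle is the reverse inclusion, and within it the descent step. Two things must be handled with care: producing the separating automorphisms $\gamma_i$, which is exactly where the regularity of $D|C$ and the inversiveness of $C$ enter through Lemma \ref{lemma: good fixed points} and the property $E^{\Gamma}=C$; and keeping the three notions of $\s$-boundedness (over $C$, over $D$, and over $E$) cleanly separated, so that flat base change transports dimensions correctly at each transition and the $\Gamma$-action visibly preserves $\s$-boundedness. Everything else is routine flat base change and bookkeeping with bases.
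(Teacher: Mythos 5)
Your proof is correct and follows essentially the same route as the paper's: the easy inclusion via base change of the generating algebras, then Lemma \ref{lemma: good fixed points} to produce $E$ with $E^{\Aut^\s(E|C)}=C$, the observation that each $\gamma$ preserves $\s$-boundedness, and Galois descent of the $\Gamma$-stable subspace $\mu(B\otimes_C E|E)$ back to $C$, followed by intersecting with $B\otimes_C D$. The only difference is cosmetic: where the paper cites Bourbaki for the descent of a $\Gamma$-stable subspace, you prove that descent inline via the linear-independence-of-automorphisms argument and the invertible matrix $(\gamma_i(e_j))$, which is exactly the standard proof of the cited result.
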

\begin{proof}
	If $b\in B$ is $\s$-bounded over $C$, then $b\otimes 1\in B\otimes_C D$ is $\s$-bounded over $D$. Thus $\mu(B|C)\subseteq\mu(B\otimes_C D|D)$. Since $\mu(B\otimes_C D|D)$ is a $D$-$\s$-subalgebra of $B\otimes_C D$ (Lemma \ref{lem: tau is subring}), it follows that $\mu(B|C)\otimes_C D\subset \mu(B\otimes_C D|D)$. The crucial step to prove the reverse inclusion is to show that $\mu(B\otimes_C D|D)$ descends to $C$.
	
By Lemma \ref{lemma: good fixed points}, there exists a $\s$-field extension $E$ of $D$ such that the fixed field of the group $\Aut^{\s}(E|C)$ of all $\s$-field automorphisms of $E|C$ is $C$. For $\gamma\in \Aut^{\s}(E|C)$ we also denote the induced automorphism $B\otimes_C E\to B\otimes_C E,\ b\otimes\lambda\to b\otimes\gamma(\lambda)$ with $\gamma$. Note that $\gamma\colon B\otimes_C E\to B\otimes_C E$ is an automorphism of $C$-$\s$-algebras that maps $E$ isomorphically onto $E$.
	Assume that $a\in B\otimes_C E$ is $\s$-bounded over $E$. Then $\gamma$ maps $E\{a\}_\s$ isomorphically onto $E\{\gamma(a)\}_\s$ and $E[a,\ldots,\s^i(a)]$ isomorphically onto $E[\gamma(a),\ldots,\s^i(\gamma(a))]$ for every $i\geq 0$. It follows that $\gamma(a)$ is $\s$-bounded over $E$. Consequently $\mu(B\otimes_C E|E)\subseteq B\otimes_C E$ is stable under the action of $\Aut^{\s}(E|C)$.
	
Let $B'$ be the subset of all elements of $\mu(B\otimes_C E|E)$ fixed under the $\Aut^{\s}(E|C)$-action. It is a $C$-$\s$-subalgebra of $B$. 
	 Therefore (cf. \cite[Cor. to Prop. 6, Chapter V, \S 10.4, A.V. 63]{Bourbaki}) $\mu(B\otimes_C E|E)=B'\otimes_C E$. If $b\otimes 1\in B\otimes_C E$ is $\s$-bounded over $E$, then $b$ has to be $\s$-bounded over $C$. Therefore $B'\subset \mu(B|C)$ and it follows that $\mu(B\otimes_C E|E)\subset \mu(B|C)\otimes_C E$ and so $\mu(B\otimes_C E|E)=\mu(B|C)\otimes_C E$. Thus $$\mu(B\otimes_C D|D)\subset \mu(B\otimes_D E|E)=\mu(B|C)\otimes_C E.$$
	But since $\mu(B\otimes_C D|D)\subset B\otimes_C D$, we indeed have the desired inclusion  
	$\mu(B\otimes_C D|D)\subset \mu(B|C)\otimes_C D$.
	\end{proof}

\begin{prop}\label{prop:sigmaalgebraicsigmaintegralGaloisgroup}
	Let $K$ be a $\sphi$-field of characteristic zero such that $\s\colon K\to K$ is surjective and $C=K^{\phi}$ is algebraically closed. Let	
	$S$ be a $\s$-Picard-Vessiot ring over $K$ and let $G$ be the $\s$-Galois group. Assume that $G$ is $\s$-integral, i.e. $C\{G\}$ is an integral domain and $\s\colon C\{G\}\to C\{G\}$ is injective. Then $\mu(S|K)\subset S$ is stable under the $G$-action, i.e. for every $C$-$\s$-algebra $B$, every $\tau\in G(B)$ maps $\mu(S|K)\otimes_C B$ into $\mu(S|K)\otimes_C B$ .
\end{prop}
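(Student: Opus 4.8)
The plan is to reduce the statement to a single universal instance involving the coaction of the $\s$-Galois group, and then to settle that instance by a descent argument along a suitable field extension.

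First I would bring in the coaction. Applying the universal element $\id\in G(C\{G\})$ yields a $K\otimes_C C\{G\}$-$\f\s$-automorphism $\gamma$ of $S\otimes_C C\{G\}$, and setting $\rho\colon S\to S\otimes_C C\{G\}$, $\rho(s)=\gamma(s\otimes 1)$, gives a homomorphism of $K$-$\s$-algebras (since $\gamma$ fixes $K\otimes 1$ and commutes with $\s$). For any $C$-$\s$-algebra $B$ and any $\tau\in G(B)$ corresponding to $\alpha\colon C\{G\}\to B$, the action of $\tau$ on $S\otimes_C B$ is obtained from $\rho$ by base change, namely $\tau(s\otimes 1)=(\id_S\otimes\alpha)(\rho(s))$. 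Hence, writing $M=\mu(S|K)$, it suffices to prove the single inclusion $\rho(M)\subseteq M\otimes_C C\{G\}$: applying $\id_S\otimes\alpha$ to $\rho(m)$ for $m\in M$ then shows $\tau(M\otimes_C B)\subseteq M\otimes_C B$ for every such $\tau$, which is exactly the desired conclusion.

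To prove $\rho(M)\subseteq M\otimes_C C\{G\}$ I would proceed in two steps. Since $\rho$ is a homomorphism of $K$-$\s$-algebras, for $b\in M$ the algebra $K[\rho(b),\ldots,\s^i\rho(b)]$ is a homomorphic image of $K[b,\ldots,\s^i(b)]$, so its Krull dimension stays bounded in $i$; thus $\rho(b)$ is $\s$-bounded over $K$, which gives $\rho(M)\subseteq\mu(S\otimes_C C\{G\}\,|\,K)$. It then remains to establish the descent inclusion $\mu(S\otimes_C C\{G\}\,|\,K)\subseteq M\otimes_C C\{G\}$. Because $G$ is $\s$-integral, $D:=\Quot(C\{G\})$ is a $\s$-field, and because $C$ is algebraically closed, $D|C$ is regular; consequently $D':=\Quot(K\otimes_C D)$ is a regular $\s$-field extension of the inversive field $K$. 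As $K$ is a field, $S$ is flat over $K$, so $S\otimes_C C\{G\}=S\otimes_K(K\otimes_C C\{G\})$ embeds into $S\otimes_K D'$. Any element that is $\s$-bounded over $K$ remains $\s$-bounded over the larger field $D'$, so Lemma \ref{lem: tau and base extension} places it in $\mu(S\otimes_K D'\,|\,D')=M\otimes_K D'$. Finally, choosing a $K$-basis of $S$ extending a $K$-basis of the $K$-subspace $M$ (which is a subalgebra by Lemma \ref{lem: tau is subring}), an element of $S\otimes_C C\{G\}$ has coordinates already lying in $K\otimes_C C\{G\}$, and membership in $M\otimes_K D'$ forces the coordinates outside $M$ to vanish; hence it lies in $M\otimes_C C\{G\}$.

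The hard part is this last descent step. The tension is that $\s$-boundedness is measured over $K$ while the coaction takes values in a tensor product formed over the smaller field $C$, so the individual $S$-coordinates of $\rho(b)$ cannot be controlled directly. The resolution is to pass to the regular $\s$-field extension $D'$ of $K$: it is precisely what makes Lemma \ref{lem: tau and base extension} available, and, combined with flatness over the field $K$ and the adapted-basis argument, it lets the conclusion be transported back from $S\otimes_K D'$ to $S\otimes_C C\{G\}$.
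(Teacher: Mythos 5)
Your proposal is correct and follows essentially the same route as the paper: reduce to the single inclusion $\rho(\mu(S|K))\subseteq\mu(S|K)\otimes_C C\{G\}$ via the coaction, observe that $\rho$ preserves $\s$-boundedness over $K$, and then descend from $\mu(S\otimes_C C\{G\}|K)$ using Lemma \ref{lem: tau and base extension} applied to the fraction field of $K\otimes_C C\{G\}$, which is a regular $\s$-field extension of the inversive field $K$ precisely because $G$ is $\s$-integral and $C$ is algebraically closed. Your two-stage construction of $D'$ and the adapted-basis argument for the final intersection are only cosmetic variants of the paper's identity $(\mu(S|K)\otimes_K D)\cap(S\otimes_K K\{G\})=\mu(S|K)\otimes_K K\{G\}$.
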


\begin{proof}
	It suffices to show that the co-action $\rho\colon S\to S\otimes_C C\{G\}$ maps $\mu(S|K)$ into $\mu(S|K)\otimes_C C\{G\}$. Since $\rho(\mu(S|K))\subset\mu(S\otimes_C C\{G\}|K)$, it suffices to show that 
	$\mu(S\otimes_C C\{G\}|K)\subseteq \mu(S|K)\otimes_C C\{G\}$.
	Because $\s\colon K\to K$ is surjective, also $\s\colon C\to C$ is surjective.
	As $C$ is algebraically closed and inversive, also $K\{G\}=K\otimes_C C\{G\}$ is a $\s$-domain (\cite[Cor. A.14 (ii)]{DVHaWib1}). Let $D$ denote the field of fractions of $K\{G\}$. 
	Every integral domain over an algebraically closed field is regular (\cite[Chapter V, \S 17, No. 5, Cor. 2]{Bourbaki}). Thus $C\{G\}$ is regular over $C$. As regularity is preserved under base change (\cite[Chapter V, \S 17, No. 3, Prop. 3]{Bourbaki}), we see that $K\otimes_C C\{G\}$ is regular over $K$. Finally, since the field of fractions of a regular algebra is regular (\cite[Chapter V, \S 17, No. 4, Cor. to Prop. 4]{Bourbaki}), we can conclude that $D$ is a regular field extension of $K$. We can therefore apply Lemma~\ref{lem: tau and base extension} to deduce that $\mu(S\otimes_K D|D)=\mu(S|K)\otimes_KD$. So	
	$$\mu(S\otimes_C C\{G\}|K)=\mu(S\otimes_K K\{G\}|K)\subseteq\mu(S\otimes_K D|D)=\mu(S|K)\otimes_KD.$$
	Therefore
	\begin{multline*}
\mu(S\otimes_K K\{G\}|K)\subseteq (\mu(S|K)\otimes_KD)\cap (S\otimes_K K\{G\})\\
=\mu(S|K)\otimes_K K\{G\}=\mu(S|K)\otimes_C C\{G\}
\end{multline*} as desired.
\end{proof}

\begin{cor}\label{prop1}
Let $K$ be one of the $\sphi$-fields in Cases \textbf{2S}, \textbf{2Q}, and \textbf{2M}. Let $L_A$ be $\s$-Picard-Vessiot extension for $\phi(Y)=AY$ with $A\in \GL_{n}(K)$ and let $S_A\subset L_A$ be the $\sigma$-Picard-Vessiot ring. Assume that $L_A$ is a field. Then $\pi(S_A|K)$ is a $K$-$\f\s$-subalgebra of $S_A$ and stable under the action of the $\s$-Galois group $\sGal(L_A|K)$.
\end{cor}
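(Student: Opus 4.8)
The plan is to assemble the corollary from the machinery developed for $\mu$, the key being the identity $\pi(S_A|K)=\mu(S_A|K)$ that holds for $\s$-domains together with Proposition~\ref{prop:sigmaalgebraicsigmaintegralGaloisgroup}.

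First I would check that $S_A$ is a $\s$-domain. Since $L_A$ is a field and $S_A\subset L_A$, the ring $S_A$ is an integral domain. Moreover $\s$ extends to $L_A$ as a field endomorphism, hence is injective, and its restriction to $S_A$ (which maps $S_A$ into itself, as $S_A$ is $\s$-generated) is injective as well. Thus $S_A$ is a $\s$-domain, and Lemma~\ref{lem1} yields $\pi(S_A|K)=\mu(S_A|K)$.

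For the subalgebra claim, Lemma~\ref{lem: tau is subring} already shows that $\mu(S_A|K)$ is a $K$-$\s$-subalgebra of $S_A$, so it only remains to verify stability under $\f$. Here I argue directly on $\pi(S_A|K)$: if $a\in S_A$ is $\s$-algebraic over $K$, pick a nonzero polynomial $Q\in K[x_0,\ldots,x_N]$ with $Q(a,\s(a),\ldots,\s^N(a))=0$. Applying the automorphism $\f$ of $S_A$ and using $\f\s=\s\f$ gives $Q^{\f}\big(\f(a),\s\f(a),\ldots,\s^N\f(a)\big)=0$, where $Q^{\f}$ denotes the polynomial obtained by applying $\f$ to the coefficients of $Q$. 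Since $\f$ restricts to an automorphism of $K$, the polynomial $Q^{\f}$ is again nonzero, so $\f(a)$ is $\s$-algebraic over $K$. Hence $\pi(S_A|K)$ is stable under $\f$, and since it clearly contains $K$, it is a $K$-$\f\s$-subalgebra of $S_A$.

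Finally, for stability under the $\s$-Galois group I would invoke Proposition~\ref{prop:sigmaalgebraicsigmaintegralGaloisgroup}. Its hypotheses are met: by Lemma~\ref{lemma: base fields satisfy H} the field $K$ satisfies Condition~$\cH$, so $\s\colon K\to K$ is surjective and $C=K^{\f}$ is algebraically closed; and by Lemma~\ref{lemma:relatalgclosedbasefieldsigmaintegralPPVgroup}, because $L_A$ is a field, the group $G=\sGal(L_A|K)$ is $\s$-integral. The proposition then gives that $\mu(S_A|K)$ is stable under the $G$-action, and via $\pi(S_A|K)=\mu(S_A|K)$ the same holds for $\pi(S_A|K)$, completing the proof. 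This argument is essentially an assembly of earlier results; the only genuinely new point is the $\f$-stability of $\pi(S_A|K)$, which is the step to check with care since the whole $\mu/\pi$ formalism is set up purely with respect to $\s$, though the verification itself is short.
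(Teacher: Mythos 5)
Your proposal is correct and follows essentially the same route as the paper: reduce $\pi(S_A|K)$ to $\mu(S_A|K)$ via Lemma~\ref{lem1} (using that $S_A$ is a $\s$-domain because $L_A$ is a field), apply Lemma~\ref{lem: tau is subring} for the subalgebra structure, get $\f$-stability from the commutation $\f\s=\s\f$, and obtain Galois stability from Proposition~\ref{prop:sigmaalgebraicsigmaintegralGaloisgroup} after verifying $\s$-integrality of the $\s$-Galois group via Lemma~\ref{lemma:relatalgclosedbasefieldsigmaintegralPPVgroup}. You merely spell out a few steps the paper leaves implicit (the $\s$-domain check and the explicit $\f$-stability computation), which is harmless.
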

\begin{proof}
In all cases $\s\colon K\to K$ is surjective and $C=K^\f$ is algebraically closed and of characteristic zero.  By Lemma  \ref{lemma:relatalgclosedbasefieldsigmaintegralPPVgroup}, 
the $\s$-Galois group is $\s$-integral. We can therefore apply Proposition \ref{prop:sigmaalgebraicsigmaintegralGaloisgroup} to deduce that $\mu(S_A|K)$ is stable under the action of the $\s$-Galois group.
By Lemma \ref{lem1}, $\pi(S_A|K)=\mu(S_A|K)$ and $\mu(S_A|K)$ is a $\s$-subring by Lemma \ref{lem: tau is subring}. Because $\f$ and $\s$ commute, $\pi(S_A|K)$ is also stable under $\f$.
\end{proof}

\renewcommand{\labelenumi}{{\rm (\roman{enumi})}}

\setcounter{tocdepth}{2}

\bibliographystyle{alpha}
\bibliography{biblio}

\def\udot#1{\ifmmode\oalign{$#1$\crcr\hidewidth.\hidewidth
  }\else\oalign{#1\crcr\hidewidth.\hidewidth}\fi} \def\cprime{$'$}
  \def\polhk#1{\setbox0=\hbox{#1}{\ooalign{\hidewidth
  \lower1.5ex\hbox{`}\hidewidth\crcr\unhbox0}}} \def\cprime{$'$}
\begin{thebibliography}{DVHW17}

\bibitem[AB17]{BorisAboutMahler}
B.~Adamczewski and J.~P. Bell.
\newblock A problem about {M}ahler functions.
\newblock {\em Ann. Sc. Norm. Super. Pisa Cl. Sci. (5)}, 17(4):1301--1355,
  2017.

\bibitem[Ada19]{Ad19}
B.~Adamczewski.
\newblock Mahler's method.
\newblock {\em Doc. Math. Extra Volume Mahler Selecta}, pages 95--122, 2019.

\bibitem[ADH20]{ADH}
B.~Adamczewski, T.~Dreyfus, and C.~Hardouin.
\newblock Hypertranscendence and linear difference equations.
\newblock {\em J. Amer. Math. Soc.}, 2020.
\newblock \url{https://doi.org/10.1090/jams/960}.

\bibitem[AF18a]{AF18a}
B.~Adamczewski and C.~Faverjon.
\newblock Mahler's method in several variables {I}: The theory of regular
  singular systems.
\newblock {\em arXiv preprint arXiv:1809.04823}, 2018.

\bibitem[AF18b]{AF18b}
B.~Adamczewski and C.~Faverjon.
\newblock Mahler's method in several variables {II}: {A}pplications to base
  change problems and finite automata.
\newblock {\em arXiv preprint arXiv:1809.04826}, 2018.

\bibitem[AMT09]{AmanoMasuokaTakeuchi:HopfPVtheory}
K.~Amano, A.~Masuoka, and M.~Takeuchi.
\newblock Hopf algebraic approach to {P}icard-{V}essiot theory.
\newblock In {\em Handbook of algebra}, volume~6, pages 127--171.
  Elsevier/North-Holland, Amsterdam, 2009.

\bibitem[AS17]{ArrecheSinger}
C.~E. Arreche and M.~F. Singer.
\newblock Galois groups for integrable and projectively integrable linear
  difference equations.
\newblock {\em J. Algebra}, 480:423--449, 2017.

\bibitem[BB92]{BeBo}
J.-P. B{\'e}zivin and A.~Boutabaa.
\newblock Sur les \'equations fonctionelles {$p$}-adiques aux
  {$q$}-diff\'erences.
\newblock {\em Collect. Math.}, 43(2):125--140, 1992.

\bibitem[Bou03]{Bourbaki}
N.~Bourbaki.
\newblock {\em Algebra {II}. {C}hapters 4--7}.
\newblock Elements of Mathematics (Berlin). Springer-Verlag, Berlin, 2003.
\newblock Translated from the 1981 French edition by P. M. Cohn and J. Howie,
  Reprint of the 1990 English edition [Springer, Berlin; MR1080964
  (91h:00003)].

\bibitem[DHR18]{DHRqdiffhypergo}
T.~Dreyfus, C.~Hardouin, and J.~Roques.
\newblock Functional relations for solutions of $q$-difference equations.
\newblock {\em arXiv:1603.06771}, 2018.

\bibitem[DVHW14]{DVHaWib1}
L.~Di~Vizio, C.~Hardouin, and M.~Wibmer.
\newblock Difference {G}alois theory of linear differential equations.
\newblock {\em Adv. Math.}, 260:1--58, 2014.

\bibitem[DVHW17]{DVHaWib2}
L.~Di~Vizio, C.~Hardouin, and M.~Wibmer.
\newblock Difference algebraic relations among solutions of linear differential
  equations.
\newblock {\em J. Inst. Math. Jussieu}, 16(1):59--119, 2017.

\bibitem[Fra63]{FrankePicardVessiottheory}
C.~H. Franke.
\newblock Picard-{V}essiot theory of linear homogeneous difference equations.
\newblock {\em Trans. Amer. Math. Soc.}, 108:491--515, 1963.

\bibitem[GR04]{GR04}
G.~Gasper and M.~Rahman.
\newblock {\em Basic hypergeometric series}, volume~96 of {\em Encyclopedia of
  Mathematics and its Applications}.
\newblock Cambridge University Press, Cambridge, second edition, 2004.
\newblock With a foreword by Richard Askey.

\bibitem[Har08]{hardouin2008hypertranscendance}
C.~Hardouin.
\newblock Hypertranscendance des systemes aux diff{\'e}rences diagonaux.
\newblock {\em Compositio Mathematica}, 144(3):565--581, 2008.

\bibitem[HS08]{HS}
C.~Hardouin and M.~F. Singer.
\newblock Differential {G}alois theory of linear difference equations.
\newblock {\em Math. Ann.}, 342(2):333--377, 2008.

\bibitem[Kub76]{Ku76}
K.~K. Kubota.
\newblock An application of {K}ronecker's theorem to transcendence theory.
\newblock In {\em S\'{e}minaire de {T}h\'{e}orie des {N}ombres de Bordeaux
  (1975--1976), {E}xp. 25}, pages 1--10. 1976.

\bibitem[Lev06]{Levin}
A.~B. Levin.
\newblock Difference algebra.
\newblock In {\em Handbook of algebra. {V}ol. 4}, volume~4 of {\em Handb.
  Algebr.}, pages 241--334. Elsevier/North-Holland, Amsterdam, 2006.

\bibitem[LvdP78]{LvdP78}
J.~H. Loxton and A.~J. van~der Poorten.
\newblock Algebraic independence properties of the {F}redholm series.
\newblock {\em J. Austral. Math. Soc. Ser. A}, 26(1):31--45, 1978.

\bibitem[Mai14]{Bachmaier:Nori}
A.~Maier.
\newblock A difference version of {N}ori's theorem.
\newblock {\em Math. Ann.}, 359(3-4):759--784, 2014.

\bibitem[Mil17]{Milne:algebraicGroups}
J.~S. Milne.
\newblock {\em Algebraic groups}, volume 170 of {\em Cambridge Studies in
  Advanced Mathematics}.
\newblock Cambridge University Press, Cambridge, 2017.
\newblock The theory of group schemes of finite type over a field.

\bibitem[MNS]{MNS20+}
A~Medvedev, D.-K. Nguyen, and T~Scanlon.
\newblock Algebraic independence of {M}ahler functions of non-exceptional
  polynomial type.
\newblock Work in progress.

\bibitem[Nis94]{Ni94}
Ku. Nishioka.
\newblock Algebraic independence by {M}ahler's method and {$S$}-unit equations.
\newblock {\em Compositio Math.}, 92(1):87--110, 1994.

\bibitem[OW15]{OvWib}
A.~Ovchinnikov and M.~Wibmer.
\newblock {$\sigma$}-{G}alois theory of linear difference equations.
\newblock {\em Int. Math. Res. Not. IMRN}, (12):3962--4018, 2015.

\bibitem[Roq18]{Ro15}
J.~Roques.
\newblock On the algebraic relations between mahler functions.
\newblock {\em Transactions of the American Mathematical Society},
  370(1):321--355, 2018.

\bibitem[SS19]{SchaefkeSinger}
R.~Sch\"{a}fke and M.~F. Singer.
\newblock Consistent systems of linear differential and difference equations.
\newblock {\em J. Eur. Math. Soc. (JEMS)}, 21(9):2751--2792, 2019.

\bibitem[TW18]{TomasicWibmer:Babbitt}
I.~Toma\v{s}i\'{c} and M.~Wibmer.
\newblock Strongly \'{e}tale difference algebras and {B}abbitt's decomposition.
\newblock {\em J. Algebra}, 504:10--38, 2018.

\bibitem[vdP76]{vdP76}
A.~J. van~der Poorten.
\newblock On the transcendence and algebraic independence of certain somewhat
  amusing numbers (results of {L}oxton and van der {P}oorten).
\newblock In {\em S\'{e}minaire de {T}h\'{e}orie des {N}ombres de {B}ordeaux
  (1975--1976), {E}xp. 14}, pages 1--13. 1976.

\bibitem[vdP87]{vdP87}
A.~J. van~der Poorten.
\newblock Remarks on automata, functional equations and transcendence.
\newblock In {\em S{\'e}minaire de Th{\'e}orie des {N}ombres de {B}ordeaux
  (1986-1987), {E}xp. 27}, pages 1--11. 1987.

\bibitem[vdPS97]{VdPdifference}
M.~van~der Put and M.~F. Singer.
\newblock {\em Galois theory of difference equations}, volume 1666 of {\em
  Lecture Notes in Mathematics}.
\newblock Springer-Verlag, Berlin, 1997.

\bibitem[Wat79]{WaterhouseIntrogroupscheme}
W.~C. Waterhouse.
\newblock {\em Introduction to affine group schemes}, volume~66 of {\em
  Graduate Texts in Mathematics}.
\newblock Springer-Verlag, New York-Berlin, 1979.

\bibitem[Wib20a]{Wibmer:AlmostsimpleAffineDifferenceAlgebraicGroups}
M.~Wibmer.
\newblock Almost-simple affine difference algebraic groups.
\newblock {\em arXiv:2007.07137v1}, 2020.

\bibitem[Wib20b]{Wibmer:FinitenessPropertiesOfAffineDifferenceAlgebraicGroups}
M.~Wibmer.
\newblock Finiteness properties of affine difference algebraic groups.
\newblock {\em Int. Math. Res. Not. IMRN}, 2020.
\newblock \url{http://dx.doi.org/10.1093/imrn/rnaa177}.

\bibitem[Wib20c]{Wibmer:OnTheDimension}
M.~Wibmer.
\newblock On the dimension of systems of algebraic difference equations.
\newblock {\em arXiv:2004.01596}, 2020.

\end{thebibliography}

\end{document}